\documentclass[10pt,reqno,final]{article}

\usepackage{amsmath,amsfonts,amssymb,amsthm,version}
\usepackage{mathrsfs,fancybox,pifont}
\usepackage{graphicx}
\usepackage{url,hyperref}
\usepackage[notcite,notref]{showkeys}
\usepackage{color}
\usepackage{subfigure,multirow}
\usepackage{amsmath,amssymb}
\usepackage{graphicx}
\usepackage{float}
\usepackage{booktabs}
\usepackage{multirow}
\usepackage{tabularx}
\usepackage{bm}
\usepackage{subfigure,multirow}
\usepackage{epstopdf}
\usepackage{cases}
\usepackage{mathtools}
\usepackage{algorithm,algorithmic}
\usepackage{authblk}
\usepackage{lipsum}

\allowdisplaybreaks

\theoremstyle{plain}
\newtheorem{theorem}{Theorem}[section]
\newtheorem{lemma}{Lemma}[section]

\newtheorem{assumption}{Assumption}[section]

\theoremstyle{definition}

\theoremstyle{remark}
\newtheorem{remark}{Remark}[section]

\title{Unconditional stability and convergence analysis of novel regularization schemes for the Navier-Stokes equations}
\author{Zhaoyang Wang\thanks{School of Mathematical Sciences, Laboratory of Mathematics and Complex Systems, MOE, Beijing Normal University, Beijing 100875, China; Research Center for Mathematics, Advanced Institute of Natural Sciences, Beijing Normal University, Zhuhai, Guangdong 519087, China. (zhaoyang584520@163.com)} \, and
Ping Lin\thanks{Corresponding author. Division of Mathematics, University of Dundee, Dundee DD1 4HN, United Kingdom (p.lin@dundee.ac.uk)}}

\affil{}

\date{\today}

\begin{document}
\maketitle

\begin{abstract}
In this paper, we construct novel first- and second-order decoupled schemes for the Navier–Stokes equations based on the penalty method and the sequential regularization method (SRM), respectively. These schemes do not require the boundary condition on the pressure and thus preserve the original velocity boundary conditions. By using the idea of the scalar auxiliary variable (SAV), the nonlinear terms of these schemes are treated explicitly, which improves computational efficiency while maintaining stability. It is important to note that we carefully reformulated the Navier–Stokes system to ensure convergence of the proposed scheme without any restriction on the time step. For the Penalty-SAV (P-SAV) schemes, at each time step it is only necessary to solve elliptic equations with constant coefficients. We prove the high-order stability (high-order regularities of the solution) of the schemes, and establish an unconditional (without time step constraints) global optimal error estimate in two dimensions as well as a local error estimate in three dimensions for the first‑order scheme. Furthermore, to more accurately approximate the incompressibility constraint without introducing extra stiffness into the system, the sequential regularization-SAV (SR-SAV) schemes are developed, and their error estimates are provided. In addition, we compare our proposed scheme with the classic linearized projection scheme to demonstrate its accuracy and efficiency.

\medskip
\noindent{\bf Keywords}: Navier-Stokes, Penalty method, Sequential regularization method, SAV approach

\medskip
\end{abstract}

\section{Introduction}
\label{section1}
In this paper, we consider numerical approximation of the incompressible Navier-Stokes equations:
\begin{subequations}\label{the1-1}
\begin{align}
&\frac{\partial \bm{u}}{\partial t}+(\bm{u} \cdot \nabla) \bm{u}-\nu \Delta \bm{u}+\nabla p=\bm{f} \quad \text{in} \ \Omega\times(0,T), \label{the1-1a}\\
&\nabla\cdot \bm{u}=0, \ \bm{u}|_{t=0}=\bm{u}_0, \quad \text{in} \ \Omega\times(0,T), \label{the1-1b}
\end{align}
\end{subequations}
where $\Omega$ is a bounded domain in $\mathbb{R}^d \ (d=2, 3)$ with no-slip boundary condition for simplicity in the analysis. $\bm{u}$ and $p$ are the fluid velocity and pressure, $\bm{f}$ is an external force, and $\nu>0$ is the viscosity. 

It is well known that the Navier–Stokes equations are the most fundamental equations for describing fluid motion.
Over the past half-century, the numerical approximation of the Navier-Stokes equations has been extensively studied and remains an area of active research. The mainstream numerical methods include coupled pressure-velocity solvers (for example, linearization schemes \cite{brezzi2012mixed, john2006comparison}) and decoupled methods. 
The computational cost of directly solving the coupled problem is generally expensive, since the pressure acts as a Lagrange multiplier of the incompressibility constraint, which leads to a saddle point problem required to be solved at each time step. Moreover, the treatment of the nonlinear term (convective term), with consideration of numerical stability, constitutes an additional computational difficulty. The decoupled method originated from the projection method proposed by Chorin \cite{chorin1968numerical} and Temam \cite{temam1969approximation}, 
a method that splits the saddle point system into two small linear problems and is therefore computationally very efficient. However, solving the problem using the projection method requires imposing the artificial boundary condition on the pressure equation, which prevents the pressure from achieving full-order accuracy and results in the final velocity not exactly satisfying the boundary conditions. Subsequently, extensive work has been devoted to improving the projection method or developing alternative decoupled approaches, including correction schemes \cite{goda1979multistep, van1986second, guermond2003velocity}, rotational incremental schemes \cite{timmermans1996approximate, guermond2001quelques} and consistent splitting schemes \cite{weinan2003gauge}. 
Although some of these methods overcome certain shortcomings of the projection scheme, they still exhibit deficiencies. For example, they either rely on the artificial boundary condition or fail to strictly satisfy the incompressibility constraint. For an overview of these decoupled method, see \cite{guermond2006overview}. 

It should be noted that the penalty method was introduced by Temam \cite{temam1968methode} as another effective strategy to decouple velocity and pressure in the Navier-Stokes equations. The numerical scheme constructed based on the penalty method avoids imposing the boundary condition on the pressure, and the velocity still satisfies the original boundary condition. However, the penalty method has not been widely applied in large-scale problems because it introduces an artificial small parameter $\epsilon$ to relax the incompressibility constraint, which may induce a numerically stiff system. Although this stiffness can be mitigated by increasing penalty paramete $\epsilon$, doing so also increases the error. For the time-discrete scheme, there is a balance between the small penalty parameter $\epsilon$ and the time step $\Delta t$, which depends on a suitable error estimate \cite{shen1992error, shen1995error}. As an improvement of the penalty method, the sequence regularization method \cite{lin1997sequential} can effectively alleviate the stiffness and more accurately approximate the divergence-free condition, which requires solving the penalty problem iteratively for a few times.

It is important to ensure stability in a decoupled scheme for the Navier–Stokes equations. The implicit-explicit scheme (semi-implicit treatment of the nonlinear term) is regarded as a robust choice that requires solving elliptic equations with variable coefficients at each time step  \cite{lin2007energy, shen2010phase, zhao2016energy}. In recent years, Shen and his collaborators \cite{shen2018scalar, shen2019new, huang2020highly, huang2022new} proposed the scalar auxiliary variable (SAV) approach for dissipative systems, which allows the explicit treatment of nonlinear terms and achieves unconditional energy stability. Li et al. \cite{li2022new} constructed the pressure correction scheme for the Navier-Stokes equations using the SAV method and provided rigorous error estimates in two dimensions. Limited by the defects of the projection method itself, the scheme is plagued by a numerical boundary layer that prevents it to be of full order on the pressure. Huang and Shen \cite{huang2023stability} proposed a new second-order consistent splitting scheme for the Navier-Stokes equations using the generalized SAV method. Under certain assumptions on the time step, they obtained high‑order stability (i.e. boundedness of high-order derivatives of the solution) results for the scheme and established a global error estimate in two dimensions and a local error estimate in three dimensions. 

In this paper, we construct first- and second-order decoupled schemes by combining the penalty and SAV methods based on a reformulated Navier–Stokes system, where the nonlinear term is treated explicitly. The use of the penalty method can avoid the need for pressure boundary conditions, and the SAV method allows us to preserve unconditional high-order stability (in $L^\infty(0,T;H^1)\cap L^2(0,T;H^2)$). We would like to mention a key technque here that the nonlinear term $(\bm{u}\cdot\nabla)\bm{u}$ in the Navier–Stokes equations is reformulated as $(\bm{u}\cdot\nabla)\bm{u}+(\nabla\cdot\bm{u})\bm{u}$, and this ensures unconditional convergence in both two- and three-dimensional cases for Penalty-SAV (P-SAV) schemes with explicit treatment of the nonlinear term. Furthermore, as an improvement to P-SAV schemes, we propose sequential regularization-SAV (SR-SAV) schemes which combine a sequential regularization formulation (see \cite{lin1997sequential, lu2008analysis}) with the SAV method. The SR formulation is built into the Baumgarte stabilization (cf. \cite{lin1997sequential}) to improve the long-time approximation of the divergence-free condition in the penalty method. The SR-SAV introduced a small number of iterations to the P-SAV so that it does not require a very small penalty parameter $\epsilon$. Therefore, it is a less stiff formulation than the P-SAV, although slightly more computationally expensive. We will see this from the error analysis result as well.

The rest of this paper is organized as follows. In Section \ref{section2}, we introduce the P-SAV method and perform convergence analysis at the continuous level. In Section \ref{section3}, we propose first- and second-order P-SAV schemes and derive their unconditional high-order stability. In Section \ref{section4}, we carry out a rigorous error analysis for the first-order P-SAV scheme. In Section \ref{section5}, the SR-SAV schemes are proposed as an improvement to the P-SAV schems. In Section \ref{section6}, we present some numerical experiments to validate our theoretical results and compare them with the classical projection scheme. Some conclusions and remarks are given in Section \ref{section7}.

\subsection*{Notation}
For domain $\Omega$ in $\mathbb{R}^{d} \ (d=2,3)$ and $1\leq p\leq\infty$, we use the standard notation for the Banach space $L^{p}(\Omega)$ and the Sobolev space $W^{k,p}(\Omega)$ or $H^p(\Omega)$ and $H^p_0(\Omega)$. The symbol $(\cdot,\cdot)$ indicates the standard scalar product in $L^2$. Throughout this paper, the letter $C$ denotes a generic positive constant, with or without subscript, its value may change from one line of an estimate to the next. We will write the dependence of the constant on parameters explicitly if it is essential.

\section{The Penalty-SAV system and its convergence}
\label{section2}
In this section, we formulate a new system to approximate the Navier–Stokes equations (\ref{the1-1}) based on the idea of auxiliary variables and the penalty-regularization method.

\subsection{System reconstruction and preliminaries} Let $q(t)$ be a time-dependent auxiliary variable with $q|_{t=0}=1$. we give the following auxiliary problem for the Navier-Stokes equations (\ref{the1-1}):
\begin{subequations}\label{the2-1}
	\begin{align}
&\frac{\partial \bm{u}_{\epsilon}}{\partial t}+q(t)\left((\bm{u}_{\epsilon} \cdot \nabla) \bm{u}_{\epsilon}+(\nabla\cdot \bm{u}_\epsilon)\bm{u}_\epsilon\right)-\nu \Delta \bm{u}_{\epsilon}+\nabla p_\epsilon=\bm{f}, \label{the2-1a}\\
&\frac{d}{dt}q(t)=\int_\Omega \left(\bm{u}_{\epsilon}\cdot \nabla \bm{u}_{\epsilon}\cdot \bm{u}_{\epsilon}+(\nabla\cdot \bm{u}_\epsilon)\bm{u}_\epsilon^2\right) \ d\bm{x}, \label{the2-1b} \\
&\nabla\cdot \bm{u}_{\epsilon}=-\epsilon p_\epsilon, \label{the2-1c}
\end{align}
\end{subequations}
where $\epsilon$ is a small parameter, and we expect that $\nabla\cdot \bm{u}_\epsilon\rightarrow 0$ as $\epsilon\rightarrow 0$. We consider the homogeneous Dirichlet boundary condition. Multiplying (\ref{the2-1a}) by $\bm{u}_\epsilon$, integrating with respect to the space variables on the domain, and multiplying (\ref{the2-1b}) by $q$, we can obtain the stability estimate:
\begin{equation}\nonumber
\begin{split}
\Vert \bm{u}_\epsilon\Vert_{L^2}^2+|q|^2\leq C.
\end{split}
\end{equation}
On the other hand, we can see that system (\ref{the2-1}) is not equivalent to the original system since 
\begin{equation}\nonumber
\begin{split}
\int_{\Omega} \left(\bm{u}_\epsilon\cdot\nabla \bm{u}_\epsilon\cdot \bm{u}_\epsilon+(\nabla\cdot \bm{u}_\epsilon)\bm{u}_\epsilon^2\right)\ d\bm{x}\not\equiv 0
\end{split}
\end{equation}
implies $q(t)\not\equiv1$. For the conventional penalty Navier-Stokes system, the nonlinear term is modified to $(\bm{u}_{\epsilon} \cdot \nabla) \bm{u}_{\epsilon}+\frac{1}{2}(\nabla\cdot \bm{u}_\epsilon)\bm{u}_\epsilon$ to ensure the dissipativity and stability of the system \cite{temam1968methode, shen1995error}. If this nonlinear term is explicitly discretized numerically, its stability and convergence in theory require a restriction on the time step size. For the auxiliary problem (\ref{the2-1}), we use here a different nonlinear term and introduce an auxiliary variable so that the resulting P‑SAV scheme, with the nonlinear term treated explicitly, can achieve unconditional stability and convergence (i.e., without a restriction on the time step) in both the two- and three-dimensional cases (see Theorem \ref{theorem4-2} and Remark \ref{remark4-4} for details).

Since the formulated auxiliary problem (\ref{the2-1}) is not equivalent to the Navier–Stokes system (\ref{the1-1}), we will provide its error estimate below. As preliminaries, we begin by recalling the following inequalities to be used in the proof.

The Poincar\'e inequality,
\begin{equation}\nonumber
\begin{split}
\Vert \bm{u}\Vert_{L^2}\leq C_{\Omega} \Vert \nabla\bm{u}\Vert_{L^2}, \quad \forall \bm{u}\in H^{1}_0(\Omega),
\end{split}
\end{equation}
where $C_{\Omega}$ is a constant depending only on $\Omega$ and $H^{1}_0(\Omega)=\left\{\bm{u}\in H^{1}(\Omega): \bm{u}|_{\partial\Omega}=0\right\}$.

We define the trilinear form $B(\cdot, \cdot, \cdot)$ by 
\begin{equation}\nonumber
\begin{split}
B(\bm{u}_1, \bm{u}_2, \bm{u}_3)=\int_\Omega \left(\left(\bm{u}_1\cdot \nabla\right)\bm{u}_2\cdot\bm{u}_3+(\nabla \cdot\bm{u}_1)\bm{u}_2\cdot \bm{u}_3\right) \ d\bm{x}=-\int_\Omega \left(\bm{u}_1\cdot \nabla\bm{u}_3 \right)\cdot \bm{u}_2\ d\bm{x}.
\end{split}
\end{equation}
By using H\"{o}lder's inequality and Sobolev inequalities \cite{lu2008analysis}, we have that for $d\leq 4$
\begin{equation}\label{the2-2}
\begin{split}
\left|B(\bm{u}_1, \bm{u}_2, \bm{u}_3)\right|\leq 
\left\{
\begin{aligned}
&C_\Omega\Vert \bm{u}_1\Vert_{H^1}\Vert \bm{u}_2\Vert_{H^1}\Vert \bm{u}_3\Vert_{H^1},\\
&C_\Omega\Vert \bm{u}_1\Vert_{H^1}\Vert \bm{u}_2\Vert_{H^2}\Vert \bm{u}_3\Vert_{L^2},\\
&C_\Omega\Vert \bm{u}_1\Vert_{H^2}\Vert \bm{u}_2\Vert_{H^1}\Vert \bm{u}_3\Vert_{L^2},\\
&C_\Omega\Vert \bm{u}_1\Vert_{L^2}\Vert \bm{u}_2\Vert_{H^2}\Vert \bm{u}_3\Vert_{H^1}.\\
\end{aligned}
\right.
\end{split}
\end{equation}
In addition,
\begin{equation}\label{the2-3}
\begin{split}
&\left|B(\bm{u}_1, \bm{u}_2, \bm{u}_3)\right|\leq C_\Omega\Vert \bm{u}_1\Vert_{L^2}^{1/2}\Vert \bm{u}_1\Vert_{H^1}^{1/2}\Vert \bm{u}_2\Vert_{L^2}^{1/2}\Vert \bm{u}_2\Vert_{H^1}^{1/2}\Vert \bm{u}_3\Vert_{H^1}, \ d=2,\\
&\left|B(\bm{u}_1, \bm{u}_2, \bm{u}_3)\right|\leq C_\Omega\Vert \bm{u}_1\Vert_{H^1}\Vert \bm{u}_2\Vert_{H^1}^{1/2}\Vert \bm{u}_2\Vert_{H^2}^{1/2}\Vert \bm{u}_3\Vert_{L^2}, \ d=3.
\end{split}
\end{equation}

We define $\mathcal{A}\bm{u}=-\nu\Delta \bm{u}-\frac{1}{\epsilon}\nabla\nabla\cdot \bm{u}$. $\mathcal{A}$ is a positive self-adjoint operator from $H^2\cap H^1_0$ onto $L^2$, and the power $\mathcal{A}^{\alpha}$ of $\mathcal{A}$ $(\alpha\in \mathbb{R})$ is well-defined. In addition, we have the following estimates \cite{shen1995error}:
\begin{equation}\label{the2-4}
\begin{split}
&\Vert \Delta \bm{u}\Vert_{L^2}\leq C\Vert \mathcal{A}\bm{u}\Vert_{L^2}, \ \forall \bm{u}\in H^2\cap H_0^1,\\
&\Vert \nabla \bm{u}\Vert_{L^2}\leq C\Vert \mathcal{A}^{\frac{1}{2}}\bm{u}\Vert_{L^2}, \ \forall \bm{u}\in H_0^1.
\end{split}
\end{equation}

We recall the following lemma\cite{huang2023stability, liu2010stable}, which will be used to derive local error estimates for the three-dimensional case.

\begin{lemma}
\label{lemma2-1}
Let $G:(0,\infty)\rightarrow(0,\infty)$ be continuous and increasing, and let $T^*$ satisfy that $0<T^*<\int_{M}^{\infty} dz/G(z)$ for some $M>0$. Suppose that quantities $z_n, w_n\geq 0$ satisfy
\begin{equation}
\begin{split}
z_n+\sum\limits_{i=0}^{n-1}\Delta t w_{i}\leq M+\sum\limits_{i=0}^{n-1}\Delta t G(z_i), \ \forall n\leq n^{*},
\end{split}
\end{equation}
with $n^{*}\Delta t\leq T^*$. Then we have $M+\sum\limits_{i=0}^{n-1}\Delta t G(z_i)\leq C_{*}$, where $C_{*}$ is independent of $\Delta t$.
\end{lemma}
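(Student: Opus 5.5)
This is a discrete nonlinear (Bihari-type) comparison argument. Set $S_n := M+\sum_{i=0}^{n-1}\Delta t\, G(z_i)$ for $0\le n\le n^*$, so $S_0=M$ and $S_{n+1}-S_n=\Delta t\, G(z_n)$. Since $w_i\ge 0$, the hypothesis gives $z_n\le S_n$, and because $G$ is increasing, $S_{n+1}-S_n\le \Delta t\, G(S_n)$. The task is then to show that such a sequence stays bounded uniformly in $\Delta t$ as long as $n\Delta t\le T^*$.

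Introduce $\Phi(s):=\int_M^s dz/G(z)$ for $s\ge M$. It is continuous and strictly increasing from $[M,\infty)$ onto $[0,\Phi_\infty)$, where $\Phi_\infty:=\int_M^\infty dz/G(z)>T^*$. Since $G$ is increasing and $S_{n+1}\ge S_n$,
\begin{equation}\nonumber
\Phi(S_{n+1})-\Phi(S_n)=\int_{S_n}^{S_{n+1}}\frac{dz}{G(z)}\le \frac{S_{n+1}-S_n}{G(S_n)}\le \Delta t .
\end{equation}
Summing gives $\Phi(S_n)\le n\Delta t\le T^*<\Phi_\infty$ for all $n\le n^*$. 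Hence $S_n\le \Phi^{-1}(T^*)=:C_*$, which depends only on $M$, $G$ and $T^*$, and not on $\Delta t$. The statement also covers the sum up to index $n-1$ with $n=n^*$.

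The main subtlety is the step $\int_{S_n}^{S_{n+1}}dz/G(z)\le (S_{n+1}-S_n)/G(S_n)$. It uses monotonicity of $G$ evaluated at the left endpoint, and that is exactly the direction the explicit sum provides: the increment is $\Delta t\,G(z_n)$ with $z_n\le S_n$. This is why the argument goes through without any smallness condition on $\Delta t$. One further point must be checked: $S_{n+1}$ itself must remain finite and lie in the domain of $\Phi$. It does, since each $S_{n+1}$ is a finite sum and $\Phi(S_{n+1})\le (n+1)\Delta t$. I would apply this bound only for $n+1\le n^*$.
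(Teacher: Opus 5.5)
Your proof is correct and complete. With $S_n$ as you define it, the inequality $\int_{S_n}^{S_{n+1}}dz/G(z)\le \Delta t\,G(z_n)/G(S_n)\le \Delta t$ uses exactly the two facts needed: $G$ is monotone, and $z_n\le S_n$. Telescoping then gives $S_n\le \Phi^{-1}(n\Delta t)\le \Phi^{-1}(T^*)$ for all $n\le n^*$, uniformly in $\Delta t$.

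The paper gives no proof of this lemma; it only recalls it from the references it cites. Your argument is the standard discrete Bihari-type argument for such a lemma. Equivalently, $\Phi^{-1}$ is the solution of $y'=G(y)$ with $y(0)=M$, so you have shown that $S_n$ stays below the ODE solution at $t=n\Delta t$.
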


We then present the essential regularity assumptions.
\begin{assumption}
\label{assumption2-1}
Assume that $\bm{u}_0\in H_0^1$ and that it satisfies the compatibility condition $\nabla\cdot \bm{u}_0=0$. In addition, we assume $\bm{f}\in L^2(0,T; L^2)$ and $\bm{f}_t\in L^2(0,T; L^2)$. For the solution $(\bm{u}, p)$ of the Navier-Stokes equations (\ref{the1-1}), it holds that \cite{temam2024navier}
\begin{equation}\label{the2-5}
\begin{split}
\bm{u}\in L^{\infty} (0,T; H^1)\cap L^{2} (0,T; H^2), \ p\in L^2(0,T; H^1).
\end{split}
\end{equation}
Furthermore, to ensure the smoothness of the Navier-Stokes equations at $t=0$, we assume
\begin{equation}\label{the2-6}
\begin{split}
tp_t\in L^2(0,T; H^1).
\end{split}
\end{equation}
For the solution $(\bm{u}_\epsilon, p_\epsilon)$ of the auxiliary system (\ref{the2-1}), following similar arguments as in \cite{brefort1988attractors}, we can show that
\begin{equation}\label{the2-7}
\begin{split}
\bm{u}_\epsilon\in L^{\infty} (0,T; H^1)\cap L^{2} (0,T; H^2).
\end{split}
\end{equation}
\end{assumption}

\subsection{Error estimate of the P-SAV formulation}
We provide an error estimate between the problem (\ref{the2-1}) and the original problem (\ref{the1-1}). Firstly, we consider the corresponding linear problem and the following result was established in \cite{shen1995error}.
\begin{lemma}
\label{lemma2-2}
Let $(\bm{u}, p)$ and $(\bm{u}_\epsilon, p_\epsilon)$ be the solutions of Stokes problems corresponding to (\ref{the1-1}) and (\ref{the2-1}), respectively. Suppose that Assumption \ref{assumption2-1} holds, we have
\begin{equation}\label{the2-8}
\begin{split}
&\int_0^t \Vert\bm{u}(s)-\bm{u}_\epsilon(s)\Vert_{L^2}^2 \ ds +t\Vert\bm{u}(t)-\bm{u}_\epsilon(t)\Vert_{L^2}^2+t^2\Vert\bm{u}(t)-\bm{u}_\epsilon(t)\Vert_{H^1}^2\\
&+\int_{0}^t s^2\Vert p(s)-p_\epsilon(s)\Vert_{L^2}^2 \ ds\leq C_{L23}\epsilon^2,
\end{split}
\end{equation}
where $C_{L23}$ does not depend on $\epsilon$.
\end{lemma}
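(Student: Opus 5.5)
We work with the error $\bm{e}=\bm{u}-\bm{u}_\epsilon$, $r=p-p_\epsilon$ for the Stokes problems, i.e.\ (\ref{the1-1}) and (\ref{the2-1}) with the nonlinear terms removed. Subtracting gives
\begin{equation}\nonumber
\bm{e}_t-\nu\Delta\bm{e}+\nabla r=0,\qquad \nabla\cdot\bm{e}=\epsilon p_\epsilon=\epsilon p-\epsilon r,\qquad \bm{e}|_{t=0}=0,\quad \bm{e}|_{\partial\Omega}=0 .
\end{equation}
The plan is a hierarchy of energy estimates with time weights $1,t,t^2$, combined with a duality argument for the unweighted $L^2$ bound. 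The weights absorb the possible lack of regularity of $p$ at $t=0$; this is what assumption (\ref{the2-6}) is for.

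\emph{Step 1 (basic energy).} Test with $\bm{e}$. Using $(\nabla r,\bm{e})=-(r,\nabla\cdot\bm{e})=-\epsilon(r,p)+\epsilon\Vert r\Vert^2$, we get
\begin{equation}\nonumber
\tfrac12\tfrac{d}{dt}\Vert\bm{e}\Vert^2+\nu\Vert\nabla\bm{e}\Vert^2+\epsilon\Vert r\Vert^2=\epsilon(r,p)\le\tfrac{\epsilon}{2}\Vert r\Vert^2+\tfrac{\epsilon}{2}\Vert p\Vert^2 .
\end{equation}
This only yields $O(\epsilon)$ for $\Vert\bm{e}\Vert^2$, so it cannot be the final tool for the $L^2$ terms.

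\emph{Step 2 (weighted $H^1$ and pressure).} Test with $t^2\bm{e}_t$ and differentiate the constraint in time: $\nabla\cdot\bm{e}_t=\epsilon p_t-\epsilon r_t$. Then $(\nabla r,\bm{e}_t)=\epsilon(r,r_t)-\epsilon(r,p_t)$, which produces $\frac{\epsilon}{2}\frac{d}{dt}(t^2\Vert r\Vert^2)$ plus lower-order terms. The term $\epsilon t^2(r,p_t)$ is controlled by $\epsilon\Vert t r\Vert\,\Vert tp_t\Vert$, which is where (\ref{the2-6}) enters.

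For the pressure error, we use the inf--sup condition on $r$ minus its mean, $\Vert r\Vert\le C(\Vert\bm{e}_t\Vert_{H^{-1}}+\Vert\nabla\bm{e}\Vert)$. The mean of $r$ is controlled through $\int\nabla\cdot\bm{e}=0$, which gives $\int r=\int p$. This lets us bound $\int_0^t s^2\Vert r\Vert^2$ by the weighted quantities.

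\emph{Step 3 (duality).} To get $\epsilon^2$ rather than $\epsilon$ for $\int_0^t\Vert\bm{e}\Vert^2$ and $t\Vert\bm{e}\Vert^2$, use a backward dual Stokes problem $-\bm{\phi}_t-\nu\Delta\bm{\phi}+\nabla\psi=\bm{e}$, $\nabla\cdot\bm{\phi}=0$. Then
\begin{equation}\nonumber
\int_0^t\Vert\bm{e}\Vert^2=\int_0^t(\nabla\cdot\bm{e},\psi)=\epsilon\int_0^t(p_\epsilon,\psi).
\end{equation}
This is $O(\epsilon)\cdot\Vert p_\epsilon\Vert_{L^2L^2}\,\Vert\psi\Vert_{L^2L^2}$, and $\Vert\psi\Vert\le C\Vert\bm{e}\Vert_{L^2L^2}$ by maximal regularity. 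Hence $\int_0^t\Vert\bm{e}\Vert^2\le C\epsilon^2$, provided $\Vert p_\epsilon\Vert_{L^2L^2}$ is bounded uniformly in $\epsilon$. That uniform bound comes from Step 1 and the inf--sup argument.

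The pointwise bound $t\Vert\bm{e}(t)\Vert^2$ follows in the same way by testing with $t\bm{e}$ and interpolating with Step 2. The terms $t^2\Vert\bm{e}\Vert_{H^1}^2$ then follow by combining Step 2 with the duality bounds.

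The main obstacle is the uniform-in-$\epsilon$ control of the penalized pressure near $t=0$, together with the bookkeeping of the weights. If the direct duality argument is insufficient, we would instead follow \cite{shen1995error}: expand in the eigenfunctions of $\mathcal{A}$ and use (\ref{the2-4}). Since the lemma is exactly the linear result of \cite{shen1995error}, citing it is ultimately legitimate.
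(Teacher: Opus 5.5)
The paper does not prove this lemma. It quotes it as the linear Stokes result of \cite{shen1995error}, so your closing fallback is exactly what the authors do. Your self-contained route is sound through Step 3. The duality identity expressing $\int_0^t\Vert\bm{e}\Vert^2$ through $\epsilon\int_0^t(p_\epsilon,\psi)$ is correct up to a sign. The uniform bound on $p_\epsilon$ is easier than you suggest: integrating Step 1 gives $\epsilon\Vert r\Vert_{L^2(0,t;L^2)}^2\le\epsilon\Vert r\Vert_{L^2(0,t;L^2)}\Vert p\Vert_{L^2(0,t;L^2)}$, hence $\Vert p_\epsilon\Vert_{L^2L^2}\le 2\Vert p\Vert_{L^2L^2}$, with no inf--sup needed. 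So uniform control of $p_\epsilon$ is not the real obstacle.

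The gap is in the weighted estimates. Testing with $t\bm{e}$ gives
\[
\tfrac12\tfrac{d}{dt}\big(t\Vert\bm{e}\Vert^2\big)+\nu t\Vert\nabla\bm{e}\Vert^2+\epsilon t\Vert r\Vert^2=\tfrac12\Vert\bm{e}\Vert^2+\epsilon t(r,p).
\]
Here $\epsilon t(r,p)$ is the same $O(\epsilon)$ cross term that defeated Step 1, so $t\Vert\bm{e}(t)\Vert^2\le C\epsilon^2$ does not follow ``in the same way''. In Step 2, the terms $\nu t\Vert\nabla\bm{e}\Vert^2+\epsilon t\Vert r\Vert^2$ that you call lower order must be $O(\epsilon^2)$ after integration. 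That is exactly what the $t\bm{e}$ test was supposed to supply. As written, the two weighted steps depend on each other, and ``interpolating'' does not break the loop.

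The missing idea is a joint absorption argument. Set $X=\int_0^t s^2\Vert r\Vert^2$, $Y=\nu\int_0^t s\Vert\nabla\bm{e}\Vert^2+\epsilon\int_0^t s\Vert r\Vert^2$ and $Z=\int_0^t s^2\Vert\bm{e}_s\Vert^2$.
\begin{itemize}
\item The $t\bm{e}$ test together with Step 3 gives $Y\le C\epsilon^2+\epsilon X^{1/2}\Vert p\Vert_{L^2L^2}$.
\item Your Step 2 identity gives $Z\le Y+\epsilon X^{1/2}\Vert sp_s\Vert_{L^2L^2}$.
\item Inf--sup gives $X\le C(Z+Y)$. This requires $p$ normalized to mean zero: since $\int_\Omega r=\int_\Omega p$, otherwise $r$ is not small at all.
\end{itemize}
Combining these gives $X\le C\epsilon^2+C\epsilon X^{1/2}$, hence $X,Y,Z\le C\epsilon^2$. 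Then $t\Vert\bm{e}(t)\Vert^2$ and $t^2\Vert\nabla\bm{e}(t)\Vert^2$ are $O(\epsilon^2)$ as well. With this step supplied, your argument becomes a valid self-contained alternative to the citation.
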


We next consider the following intermediate linear equations:
\begin{subequations}\label{the2-9}
\begin{align}
&\frac{\partial \bm{v}}{\partial t}-\nu \Delta \bm{v}+\nabla z=\bm{f}-r\left((\bm{u} \cdot \nabla) \bm{u}-(\nabla\cdot \bm{u})\bm{u}\right) \quad \text{in} \ \Omega\times(0,T), \label{the2-9a}\\
&\nabla\cdot \bm{v}=-\epsilon z, \label{the2-9b}\\
&\frac{dr}{dt}=\int_\Omega \left(\bm{u}\cdot \nabla\bm{u}+(\nabla\cdot\bm{u})\bm{u}\right)\cdot\bm{u} \ d\bm{x}, \quad \bm{v}|_{t=0}=\bm{u}_0.
\end{align}
\end{subequations}
where $\bm{u}$ is the solution of the Navier-Stokes equations (\ref{the1-1}) so that $\nabla\cdot \bm{u}=0$. $r(t)$ is an auxiliary variable with $r(0)=1$. Since $\frac{dr}{dt}=0$, we can easily see that $r\equiv 1$.

Let $\bm{\xi}=\bm{v}-\bm{u}$, $\tau=z-p$. By subtracting (\ref{the2-9}) from (\ref{the1-1}), we have the following linear problem:
\begin{subequations}\label{the2-10}
\begin{align}
&\frac{\partial \bm{\xi}}{\partial t}-\nu \Delta \bm{\xi}+\nabla \tau=0, \label{the2-10a}\\
&\nabla\cdot \bm{\xi}=-\epsilon \tau-\epsilon p, \label{the2-10b}\\
&\frac{dr}{dt}=\int_\Omega \left(\bm{u}\cdot \nabla\bm{u}+(\nabla\cdot\bm{u})\bm{u}\right)\cdot \bm{u} d\bm{x}=0,   \quad \bm{\xi}(0)=0.
\end{align}
\end{subequations}

\begin{lemma}
\label{lemma2-3}
If Assumption \ref{assumption2-1} holds, we have
\begin{equation}\label{the2-11}
\begin{split}
\int_0^t \Vert\bm{\xi}(s)\Vert_{L^2}^2 \ ds +t\Vert\bm{\xi}(t)\Vert_{L^2}^2+t^2\Vert\bm{\xi}(t)\Vert_{H^1}^2+\int_0^{t}s^2 \Vert\tau(s)\Vert_{L^2}^2 \ ds \leq C_{L24}\epsilon^2.
\end{split}
\end{equation}
\end{lemma}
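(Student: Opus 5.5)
The quickest route is to recognize (\ref{the2-10}) as exactly the error system of the linear (Stokes) penalty problem, and then invoke Lemma \ref{lemma2-2}. Since $r\equiv 1$ and $\nabla\cdot\bm{u}=0$, the right-hand side of (\ref{the2-9a}) is a fixed forcing $\tilde{\bm f}:=\bm f-(\bm u\cdot\nabla)\bm u\in L^2(0,T;L^2)$. It does not depend on $\bm v$, so (\ref{the2-9}) is simply the penalized Stokes problem with data $\tilde{\bm f}$ and initial value $\bm u_0$. Likewise, $(\bm u,p)$ solves the unpenalized Stokes problem
\[
\bm u_t-\nu\Delta\bm u+\nabla p=\tilde{\bm f},\qquad \nabla\cdot\bm u=0,
\]
with the same data. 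Hence $(\bm v,z)$ and $(\bm u,p)$ are precisely the pair $(\bm u_\epsilon,p_\epsilon)$, $(\bm u,p)$ of Lemma \ref{lemma2-2}, with $\bm f$ replaced by $\tilde{\bm f}$. Then (\ref{the2-11}) is (\ref{the2-8}) for $\bm\xi=\bm v-\bm u$ and $\tau=z-p$, provided the hypotheses of Lemma \ref{lemma2-2} hold for this forcing.

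The substantive step is therefore checking those hypotheses. The estimate of \cite{shen1995error} uses only the regularity of the Stokes solution $(\bm u,p)$, namely (\ref{the2-5})--(\ref{the2-6}): $\bm u\in L^\infty(H^1)\cap L^2(H^2)$, $p\in L^2(H^1)$ and $tp_t\in L^2(H^1)$. These are assumed in Assumption \ref{assumption2-1} for the Navier--Stokes solution, which is the same $(\bm u,p)$. Any bounds that Lemma \ref{lemma2-2} needs on $\tilde{\bm f}$ itself, such as $\tilde{\bm f}\in L^2(L^2)$, follow from (\ref{the2-2}): $\|(\bm u\cdot\nabla)\bm u\|_{L^2}\le C\|\bm u\|_{H^1}\|\bm u\|_{H^2}$. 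Because the data enter the error equation only through $(\bm u,p)$, the constant $C_{L24}$ is independent of $\epsilon$.

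To be safe, I would also sketch the direct energy argument behind Lemma \ref{lemma2-2}, which relies only on these quantities.

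\textbf{(i)} Test (\ref{the2-10a}) with $\bm\xi$ and use (\ref{the2-10b}) to write $(\nabla\tau,\bm\xi)=-(\tau,\nabla\cdot\bm\xi)=\epsilon\|\tau\|^2+\epsilon(p,\tau)$. This gives
\[
\tfrac12\tfrac{d}{dt}\|\bm\xi\|^2+\nu\|\nabla\bm\xi\|^2+\tfrac{\epsilon}{2}\|\tau\|^2\le \tfrac{\epsilon}{2}\|p\|^2,
\]
which yields only $O(\epsilon)$ bounds. This is not enough on its own.

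\textbf{(ii)} To reach $\epsilon^2$, use a duality (negative-norm) argument. Apply $\mathcal A^{-1}$, or equivalently test with the solution of an auxiliary penalized Stokes problem. The pressure term then becomes $(p,\nabla\cdot\mathcal A^{-1}\bm\xi)$-type quantities, bounded by $\epsilon\|p\|\,\|\bm\xi\|$ up to $\epsilon$-uniform constants via (\ref{the2-4}). This gives $\int_0^t\|\bm\xi\|^2\le C\epsilon^2$.

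\textbf{(iii)} Multiply by $t$ and test with $\bm\xi$, then by $t^2$ and test with $\bm\xi_t$, using the time-differentiated (\ref{the2-10b}) and the weight $tp_t\in L^2(H^1)$. This gives the bounds on $t\|\bm\xi\|^2$, $t^2\|\bm\xi\|_{H^1}^2$ and $\int s^2\|\tau\|^2$, the last one obtained through the inf-sup bound $\|\tau\|\le C\|\bm\xi_t-\nu\Delta\bm\xi\|_{H^{-1}}$.

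I expect the main obstacle to be step (ii): obtaining $\epsilon^2$ rather than $\epsilon$ uniformly in $\epsilon$. This requires the $\epsilon$-uniform estimates (\ref{the2-4}) for $\mathcal A$, and the $t$-weights exist to absorb the non-smoothness of $p$ at $t=0$. Since all of this is exactly what is carried out for Lemma \ref{lemma2-2}, the proof reduces to the identification above.
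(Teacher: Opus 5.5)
Your proposal is correct and follows the paper's proof: because $r\equiv 1$ and $\nabla\cdot\bm{u}=0$, you and the paper both read (\ref{the2-9}) as the penalized Stokes problem with forcing $\bm{f}-(\bm{u}\cdot\nabla)\bm{u}\in L^2(0,T;L^2)$, whose unpenalized counterpart is solved by $(\bm{u},p)$, so (\ref{the2-11}) is just (\ref{the2-8}) from Lemma \ref{lemma2-2}. The only difference is the hypothesis check---the paper verifies $t\,\partial_t\bigl(\bm{f}-(\bm{u}\cdot\nabla)\bm{u}\bigr)\in L^2(0,T;L^2)$ using $t\bm{u}_t\in L^2(0,T;H_0^1)$, while you note that the error system (\ref{the2-10}) is driven only by $p$, so (\ref{the2-5})--(\ref{the2-6}) suffice directly---and your optional step (iii) should not be taken literally, because testing with $t\bm{\xi}$ brings back the term $\epsilon t(\tau,p)$ from step (i) and gives only $t\Vert\bm{\xi}\Vert_{L^2}^2=O(\epsilon)$ (you would need to stay in the duality setting, e.g.\ test $\bm{\xi}_t+\mathcal{A}\bm{\xi}=\nabla p$ with $t\mathcal{A}^{-1}\bm{\xi}_t$ and move the time derivative onto $p$), although the reduction itself does not depend on that sketch.
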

\begin{proof}
We only need to verify system (\ref{the2-9}) satisfies assumption \ref{assumption2-1}. 

It is easy to see that $\left(\bm{f}-(\bm{u} \cdot \nabla) \bm{u}-(\nabla\cdot \bm{u})\bm{u}\right)\in L^2(0,T;L^2)$. Furthermore, for the Navier Stokes equations (\ref{the1-1}), we have $t\bm{u}_t\in L^2(0,T; H_0^1)$ (see \cite{heywood1982finite}). Therefore, $t\frac{\partial }{\partial t}\left(\bm{f}-(\bm{u} \cdot \nabla) \bm{u}-(\nabla\cdot \bm{u})\bm{u}\right)\in L^2(0,T;L^2)$. The desired result (\ref{the2-11}) can be obtained directly by Lemma \ref{lemma2-2}.
\end{proof}

Finally, we consider the error between the solutions of the auxiliary equation (\ref{the2-1}) and the Navier–Stokes equations (\ref{the1-1}). Let $e_{\bm{v}}=\bm{u}_\epsilon-\bm{v}$, $e_z=p_\epsilon-z$ and $e_r=q-r$. By combining (\ref{the2-1}) and (\ref{the2-9}), we can obtain
\begin{subequations}\label{the2-12}
\begin{align}
&\frac{\partial e_{\bm{v}}}{\partial t}+N_1-\nu \Delta e_{\bm{v}}+\nabla e_{z}=0, \label{the2-12a}\\
&\nabla\cdot e_{\bm{v}}=-\epsilon e_{z},  \label{the2-12b}\\
&\frac{de_r}{dt}=\int_\Omega N_2 \ d\bm{x}, \quad e_{\bm{v}}(0)=0, \quad e_{r}(0)=0, \label{the2-12c}
\end{align}
\end{subequations}
where
\begin{equation}\label{the2-13}
\begin{split}
&N_1=q(\bm{u}_{\epsilon} \cdot \nabla) \bm{u}_{\epsilon}-r(\bm{u}\cdot \nabla)\bm{u}
+q(\nabla\cdot \bm{u}_\epsilon)\bm{u}_\epsilon-r(\nabla\cdot \bm{u})\bm{u}\\
&=e_r\bm(\bm{u}_{\epsilon} \cdot \nabla) \bm{u}_\epsilon+\left(e_{\bm{v}}+\bm{\xi}\right)\cdot \nabla\bm{u}_\epsilon+\bm{u}\cdot\nabla \left(e_{\bm{v}}+\bm{\xi}\right)\\
&+e_r(\nabla\cdot \bm{u}_\epsilon)\bm{u}_\epsilon+\left(\nabla\cdot(e_{\bm{v}}+\bm{\xi})\right)\bm{u}_\epsilon+(\nabla\cdot \bm{u})(e_{\bm{v}}+\bm{\xi}),
\end{split}
\end{equation}
\begin{equation}\label{the2-14}
\begin{split}
&N_2=\bm{u}_\epsilon\cdot \nabla \bm{u}_\epsilon\cdot\bm{u}_\epsilon -\bm{u}\cdot \nabla\bm{u}\cdot \bm{u}+(\nabla\cdot \bm{u}_\epsilon)\bm{u}_\epsilon^2-(\nabla\cdot \bm{u})\bm{u}^2\\
&=\bm{u}_\epsilon\cdot \nabla \bm{u}_\epsilon\cdot e_{\bm{v}}+\left(e_{\bm{v}}+\bm{\xi}\right)\cdot \nabla\bm{u}_\epsilon\cdot \bm{u}+\bm{u}\cdot \nabla\left(e_{\bm{v}}+\bm{\xi}\right)\cdot \bm{u}\\
&+(\nabla\cdot \bm{u}_\epsilon)\bm{u}_\epsilon\cdot e_{\bm{v}}+\left(\nabla\cdot(e_{\bm{v}}+\bm{\xi})\right)\bm{u}_\epsilon\cdot \bm{u}+(\nabla\cdot \bm{u})(e_{\bm{v}}+\bm{\xi})\cdot \bm{u}.
\end{split}
\end{equation}

\begin{theorem}
\label{theorem2-4}
Let Assumption \ref{assumption2-1} holds. For nonlinear problems (\ref{the1-1}) and (\ref{the2-1}), we have the following estimates:
\begin{equation}\label{the2-15}
\begin{split}
&\int_0^t \Vert\bm{u}(s)-\bm{u}_\epsilon(s)\Vert_{L^2}^2 \  ds+t\Vert\bm{u}-\bm{u}_\epsilon\Vert_{L^2}^2+t^2\Vert\bm{u}-\bm{u}_\epsilon\Vert_{H^1}^2+\int_0^{t} s^2\Vert p-p_\epsilon\Vert_{L^2}^2\ ds+|q-r|^2\\
& \leq C_{T25}\epsilon^2, \ \text{for} \ t\in(0,T_0].
\end{split}
\end{equation}
where $C_{T25}$ is a positive constant independent of $\epsilon$, $T_0\in(0,T)$.
\end{theorem}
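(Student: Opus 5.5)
The plan is to split the error through the intermediate linear solution $(\bm{v},z,r)$ of (\ref{the2-9}). We write $\bm{u}-\bm{u}_\epsilon=-\bm{\xi}-e_{\bm{v}}$ and $p-p_\epsilon=-\tau-e_z$, and we note $r\equiv 1$. Lemma \ref{lemma2-3} already gives the bound (\ref{the2-11}) for $(\bm{\xi},\tau)$. It therefore suffices to prove the same weighted estimate, together with $|e_r|^2\le C\epsilon^2$, for the nonlinear error system (\ref{the2-12}); the triangle inequality then gives (\ref{the2-15}). Throughout, $\bm{u}$ and $\bm{u}_\epsilon$ are bounded in $L^\infty(0,T;H^1)\cap L^2(0,T;H^2)$ by Assumption \ref{assumption2-1}, and $q$ is bounded by the energy estimate.

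\textbf{Step 1 ($L^2$-level estimate).} Take the inner product of (\ref{the2-12a}) with $e_{\bm{v}}$. By (\ref{the2-12b}), the pressure term becomes $-(e_z,\nabla\cdot e_{\bm{v}})=\epsilon\Vert e_z\Vert^2\ge 0$. Multiply (\ref{the2-12c}) by $e_r$ and add. The dangerous terms are the $e_r$ contributions: $e_r B(\bm{u}_\epsilon,\bm{u}_\epsilon,e_{\bm{v}})$ from $N_1$, and $e_r\int N_2$. Their leading parts $e_rB(\bm{u}_\epsilon,\bm{u}_\epsilon,e_{\bm{v}})$ and $e_r\int\bm{u}_\epsilon\cdot\nabla\bm{u}_\epsilon\cdot e_{\bm{v}}+\dots$ do not cancel exactly, since the SAV structure is not symmetric here. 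We bound them by (\ref{the2-2}) as $C|e_r|\Vert\bm{u}_\epsilon\Vert_{H^1}\Vert\bm{u}_\epsilon\Vert_{H^2}\Vert e_{\bm{v}}\Vert_{L^2}\le C\Vert\bm{u}_\epsilon\Vert_{H^2}^2|e_r|^2+C\Vert e_{\bm{v}}\Vert^2$. The terms in $e_{\bm{v}}$ are handled with (\ref{the2-2})--(\ref{the2-3}), for example $|B(e_{\bm{v}},\bm{u}_\epsilon,e_{\bm{v}})|\le \frac{\nu}{4}\Vert\nabla e_{\bm{v}}\Vert^2+C\Vert\bm{u}_\epsilon\Vert_{H^2}^2\Vert e_{\bm{v}}\Vert^2$. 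The terms in $\bm{\xi}$ act as sources and are bounded by $C\Vert\bm{\xi}\Vert_{L^2}^2(\Vert\bm{u}\Vert_{H^2}^2+\Vert\bm{u}_\epsilon\Vert_{H^2}^2)$ after integrating by parts onto $e_{\bm{v}}$ (or $\bm{u}$), using the last form of $B$. For the scalar equation, $|\int N_2|\le C(\Vert\bm{u}_\epsilon\Vert_{H^2}+\Vert\bm{u}\Vert_{H^2})(\Vert e_{\bm{v}}\Vert+\Vert\bm{\xi}\Vert)$, again after moving derivatives off the error. We then apply Gronwall with the integrable weight $1+\Vert\bm{u}\Vert_{H^2}^2+\Vert\bm{u}_\epsilon\Vert_{H^2}^2$. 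The $\bm{\xi}$ sources involve $\Vert\bm{\xi}\Vert^2$ multiplied by an $L^1$ function, not just $\int\Vert\bm{\xi}\Vert^2$. This is where the time-weighted bound $t\Vert\bm{\xi}(t)\Vert^2\le C\epsilon^2$ is insufficient near $t=0$. I would instead bound $\Vert\bm{\xi}\Vert_{L^\infty(L^2)}$ by using an unweighted energy estimate of (\ref{the2-10}), or bound $\Vert\bm{u}\Vert_{H^2}$ pointwise, and it is here that the restriction to $(0,T_0]$ enters. The outcome is $\sup_{s\le t}(\Vert e_{\bm{v}}\Vert^2+|e_r|^2)+\int_0^t\Vert e_{\bm{v}}\Vert_{H^1}^2\le C\epsilon^2$. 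If an $L^2$ source bound of order $\epsilon$ is not directly available, a fallback is to use $\int_0^t\Vert\bm{\xi}\Vert^2\le C\epsilon^2$ combined with $L^\infty$-in-time bounds of $\bm{u},\bm{u}_\epsilon$ in $H^1$ for $d=2$.

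\textbf{Step 2 (weighted $H^1$ estimate and pressure).} Multiply (\ref{the2-12a}) by $t\,\partial_t e_{\bm{v}}$, or equivalently test with $t^2\mathcal{A}e_{\bm{v}}$, after eliminating $e_z=-\epsilon^{-1}\nabla\cdot e_{\bm{v}}$. Using (\ref{the2-4}), this gives $t^2\Vert e_{\bm{v}}\Vert_{H^1}^2\le C\epsilon^2$. For $d=3$ the nonlinear term requires the second bound in (\ref{the2-3}), which produces a superlinear Gronwall inequality. We would handle it with Lemma \ref{lemma2-1} in its continuous form, and this is precisely why the theorem is stated only on $(0,T_0]$. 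Finally, the pressure estimate $\int_0^t s^2\Vert e_z\Vert^2\le C\epsilon^2$ follows from the inf-sup condition applied to (\ref{the2-12a}): $\Vert e_z\Vert\le C(\Vert\partial_te_{\bm{v}}\Vert_{H^{-1}}+\Vert\nabla e_{\bm{v}}\Vert+\Vert N_1\Vert_{H^{-1}})$, weighted by $s^2$ and integrated using the bounds above.

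\textbf{Main obstacle.} I expect the main difficulty to be the coupling between $e_r$ and $e_{\bm{v}}$ in Step 1. Because the modified nonlinearity $(\bm{u}\cdot\nabla)\bm{u}+(\nabla\cdot\bm{u})\bm{u}$ is not skew, the cross terms do not cancel, and the $\bm{\xi}$ sources carry only time-weighted $\epsilon$-bounds. Closing Gronwall uniformly in $\epsilon$ near $t=0$, and for $d=3$ on a finite interval $T_0$, is where the real care is needed.
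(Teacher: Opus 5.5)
Your Step 1 has a genuine gap, and it sits exactly where you suspected. When you test (\ref{the2-12a}) with $e_{\bm{v}}$, each $\bm{\xi}$-term in $N_1$ carries one derivative that must fall on $\bm{u}$, $\bm{u}_\epsilon$ or $e_{\bm{v}}$. With only $\Vert\bm{\xi}\Vert_{L^2}$ available, the best source you get is $C(\Vert\bm{u}\Vert_{H^2}^2+\Vert\bm{u}_\epsilon\Vert_{H^2}^2)\Vert\bm{\xi}\Vert_{L^2}^2$. Its time integral is $O(\epsilon^2)$ only if $\sup_s\Vert\bm{\xi}(s)\Vert_{L^2}^2\le C\epsilon^2$, or if the $H^2$ norms are bounded pointwise in time. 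None of your remedies supplies either.
\begin{itemize}
\item The unweighted energy estimate of (\ref{the2-10}) reads $\frac{d}{dt}\Vert\bm{\xi}\Vert_{L^2}^2+2\nu\Vert\nabla\bm{\xi}\Vert_{L^2}^2+\epsilon\Vert\tau\Vert_{L^2}^2\le\epsilon\Vert p\Vert_{L^2}^2$. It gives only $\sup_s\Vert\bm{\xi}\Vert_{L^2}^2\le C\epsilon$, so you lose half an order.
\item $\epsilon$-uniform $L^\infty(0,T;H^2)$ bounds on $\bm{u},\bm{u}_\epsilon$ are not part of Assumption \ref{assumption2-1}. In general they fail as $t\to0$ when $\bm{u}_0$ is only in $H^1_0$.
\item The 2-D fallback does not work either. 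A term like $\int_\Omega(\bm{\xi}\cdot\nabla e_{\bm{v}})\cdot\bm{u}_\epsilon\,d\bm{x}$ needs $\bm{u}_\epsilon\in L^\infty(\Omega)$, or $\nabla\bm{u}_\epsilon\in L^4$ if you keep the derivative there. That is $H^2$-type information, not $L^\infty(0,T;H^1)$.
\end{itemize}
So your claimed outcome $\sup_s(\Vert e_{\bm{v}}\Vert_{L^2}^2+|e_r|^2)+\int_0^t\Vert e_{\bm{v}}\Vert_{H^1}^2\le C\epsilon^2$ is not established, and it is more than the theorem asserts anyway. As a result, the unweighted bound on $|q-r|^2$ and your Step 2, which relies on Step 1, are both unsupported.

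The missing idea is a negative-norm (duality) estimate, which is what the paper uses. Test (\ref{the2-12a}) with $\mathcal{A}^{-1}e_{\bm{v}}$ and multiply (\ref{the2-12c}) by $e_r$. By (\ref{the2-12b}) you get $\frac12\frac{d}{dt}\Vert\mathcal{A}^{-1/2}e_{\bm{v}}\Vert_{L^2}^2+\Vert e_{\bm{v}}\Vert_{L^2}^2=-(N_1,\mathcal{A}^{-1}e_{\bm{v}})$. Since $\Vert\mathcal{A}^{-1}e_{\bm{v}}\Vert_{H^2}\le C\Vert e_{\bm{v}}\Vert_{L^2}$ uniformly in $\epsilon$ by (\ref{the2-4}), every derivative in the $\bm{\xi}$-terms can be moved onto the smooth test function. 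For example, $|\int_\Omega(\bm{u}\cdot\nabla\mathcal{A}^{-1}e_{\bm{v}})\cdot\bm{\xi}\,d\bm{x}|\le C\Vert\bm{u}\Vert_{H^1}\Vert e_{\bm{v}}\Vert_{L^2}\Vert\bm{\xi}\Vert_{L^2}$.
\begin{itemize}
\item The sources become $C\Vert\bm{\xi}\Vert_{L^2}^2$, with $C$ depending only on $L^\infty(0,T;H^1)$ norms.
\item The $e_r$-couplings are handled by the dissipation $\Vert e_{\bm{v}}\Vert_{L^2}^2$ and Gronwall with weight $\Vert\bm{u}\Vert_{H^2}^2+\Vert\bm{u}_\epsilon\Vert_{H^2}^2\in L^1(0,T)$.
\item You obtain $|e_r|^2+\int_0^t\Vert e_{\bm{v}}\Vert_{L^2}^2\le C\int_0^t\Vert\bm{\xi}\Vert_{L^2}^2\le C\epsilon^2$ from the unweighted part of Lemma \ref{lemma2-3} alone.
\end{itemize}

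After that, your energy-type bounds become harmless because a time weight is present.
\begin{itemize}
\item First test with $te_{\bm{v}}$. Differentiating the weight produces $\frac12\Vert e_{\bm{v}}\Vert_{L^2}^2+\frac12|e_r|^2$, which the duality step controls. Also $\int_0^ts\Vert\bm{u}_\epsilon\Vert_{H^2}^2\Vert\bm{\xi}\Vert_{L^2}^2\,ds\le C\epsilon^2$, since $s\Vert\bm{\xi}(s)\Vert_{L^2}^2\le C\epsilon^2$.
\item Then test with $t^2\partial_te_{\bm{v}}$, which uses $\int_0^ts\Vert\nabla e_{\bm{v}}\Vert_{L^2}^2$ from the previous stage. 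A single $t\,\partial_te_{\bm{v}}$ test would need the unweighted $\int_0^t\Vert\nabla e_{\bm{v}}\Vert_{L^2}^2$, which you do not have.
\item Finally, recover $e_z$ from the momentum equation as you propose.
\end{itemize}

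One further correction: given (\ref{the2-5}) and (\ref{the2-7}), the system (\ref{the2-12}) is linear in $(e_{\bm{v}},e_z,e_r)$. So no superlinear Gronwall argument or Lemma \ref{lemma2-1} is needed, even for $d=3$, and the restriction to $(0,T_0]$ is not where any of these difficulties is resolved.
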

\begin{proof}
The proof follows the ideas in \cite{shen1995error}. Taking the inner product of (\ref{the2-12a}) with $\mathcal{A}^{-1}e_{\bm{v}}$ and combining with (\ref{the2-12b}), we have
\begin{equation}\label{the2-16}
\begin{split}
\frac{1}{2}\frac{d}{dt}\Vert \mathcal{A}^{-\frac{1}{2}}e_{\bm{v}}\Vert_{L^2}^2+\Vert e_{\bm{v}}\Vert_{L^2}^2=-(N_1, \mathcal{A}^{-1}e_{\bm{v}}).
\end{split}
\end{equation}

Multiplying (\ref{the2-1c}) by $e_r$, we obtain
\begin{equation}\label{the2-17}
\begin{split}
\frac{1}{2}\frac{d}{dt}| e_r|^2=(N_2, e_r).
\end{split}
\end{equation}

With the help of (\ref{the2-4}), and by using the Cauchy-Schwarz inequality together with integration by parts, we derive that 
\begin{equation}\label{the2-18}
\begin{split}
&\left|(N_1, \mathcal{A}^{-1}e_{\bm{v}})\right|\\
&\leq \left|e_r\int_\Omega  \bm{u}_{\epsilon}\cdot \nabla \mathcal{A}^{-1}e_{\bm{v}}\cdot \bm{u}_{\epsilon} \ d\bm{x}\right|+ \left|\int_\Omega  \left(e_{\bm{v}}+\bm{\xi}\right)\cdot \nabla \mathcal{A}^{-1}e_{\bm{v}}\cdot \bm{u}_{\epsilon} \ d\bm{x}\right|+\left|\int_\Omega  \bm{u}\cdot \nabla \mathcal{A}^{-1}e_{\bm{v}}\cdot \left(e_{\bm{v}}+\bm{\xi}\right) \ d\bm{x}\right|\\
&\leq C\left(\Vert \bm{u}_\epsilon\Vert_{H^2}^2+\Vert \bm{u}\Vert_{H^2}^2\right)\Vert \mathcal{A}^{-\frac{1}{2}}e_{\bm{v}}\Vert_{L^2}^2+C\Vert \bm{u}_\epsilon\Vert_{H^2}^2|e_r|^2+C_{\delta}\Vert e_{\bm{v}}\Vert_{L^2}^2+C\Vert \bm{\xi}\Vert_{L^2}^2,
\end{split}
\end{equation}

\begin{equation}\label{the2-19}
\begin{split}
\left|(N_2, e_r)\right|\leq C\left(\Vert \bm{u}_\epsilon\Vert_{H^2}^2+\Vert \bm{u}\Vert_{H^2}^2\right)| e_r|^2+C_{\delta}\Vert e_{\bm{v}}\Vert_{L^2}^2+C\Vert \bm{\xi}\Vert_{L^2}^2,
\end{split}
\end{equation}
where $C_\delta$ is a sufficiently small positive constant.

Combining (\ref{the2-16})-(\ref{the2-19}), we get
\begin{equation}\label{the2-20}
\begin{split}
\frac{d}{dt}\Vert \mathcal{A}^{-\frac{1}{2}}e_{\bm{v}}\Vert_{L^2}^2+\frac{d}{dt}|e_r|^2+\Vert e_{\bm{v}}\Vert_{L^2}\leq C\left(\Vert \bm{u}_\epsilon\Vert_{H^2}^2+\Vert \bm{u}\Vert_{H^2}^2\right)\left(\Vert \mathcal{A}^{-\frac{1}{2}}e_{\bm{v}}\Vert_{L^2}^2+|e_r|^2\right)+C\Vert \bm{\xi}\Vert_{L^2}^2.
\end{split}
\end{equation}

Integrating (\ref{the2-20}) in $t$, using Lemma \ref{lemma2-3} and the Gronwall inequality \cite{heywood1990finite}, we obtain
\begin{equation}\label{the2-21}
\begin{split}
\Vert \mathcal{A}^{-\frac{1}{2}}e_{\bm{v}}\Vert_{L^2}^2+|e_r|^2+\int_0^t \Vert e_{\bm{v}}(s)\Vert_{L^2}^2 \ ds\leq C\int_0^t \Vert \bm{\xi}(s)\Vert_{L^2}^2 \ ds\leq C\epsilon^2.
\end{split}
\end{equation}

Taking the inner product of (\ref{the2-12a}) with $te_{\bm{v}}$, and multiplying (\ref{the2-12c}) with $te_r$, we have
\begin{equation}\label{the2-22}
\begin{split}
&\frac{1}{2}\frac{d}{dt}t\Vert e_{\bm{v}}\Vert_{L^2}^2+\frac{1}{2}\frac{d}{dt}t|e_r|^2+\nu t\Vert \nabla e_{\bm{v}}\Vert_{L^2}^2+\epsilon t\Vert e_z\Vert_{L^2}^2=\frac{1}{2}\Vert e_{\bm{v}}\Vert_{L^2}^2+\frac{1}{2}|e_r|^2-t(N_1, e_{\bm{v}})+t(N_2, e_r)\\
&\leq \frac{1}{2}\Vert e_{\bm{v}}\Vert_{L^2}^2+\frac{1}{2}| e_r|^2+C_\delta t\Vert\nabla e_{\bm{v}}\Vert_{L^2}^2+Ct\left(\Vert \bm{u}_\epsilon\Vert_{H^2}^2+\Vert \bm{u}\Vert_{H^2}^2\right)(\Vert e_{\bm{v}}\Vert_{L^2}^2+|e_r|^2)+Ct\Vert\bm{\xi}(s)\Vert_{H^1}^2
\end{split}
\end{equation}

By applying the Gronwall inequality, we can obtain
\begin{equation}\label{the2-23}
\begin{split}
&t\Vert e_{\bm{v}}\Vert_{L^2}^2+t| e_r|^2+\nu \int_0^t s\Vert \nabla e_{\bm{v}}(s)\Vert_{L^2}^2 \ ds+\epsilon \int_0^t s\Vert e_z(s)\Vert_{L^2}^2\leq C(1+t)\epsilon^2\leq C\epsilon^2, \ \forall t\in(0,T_0].
\end{split}
\end{equation}

Note that $\nabla \cdot (e_{\bm{v}})_t=-\epsilon (e_{z})_t$. Taking the inner product of (\ref{the2-12a}) with $t^2 (e_{\bm{v}})_t$ and multiplying (\ref{the2-12c}) with $t^2e_r$, we derive
\begin{equation}\label{the2-24}
\begin{split}
&t^2\Vert (e_{\bm{v}})_t\Vert_{L^2}^2+\frac{\nu}{2}\frac{d}{dt}t^2\Vert  \nabla e_{\bm{v}}\Vert_{L^2}^2+\frac{\epsilon}{2}\frac{d}{dt}t^2\Vert e_z\Vert_{L^2}^2+\frac{1}{2}\frac{d}{dt}t^2| e_r|^2\\
&=\nu t\Vert \nabla e_{\bm{v}}\Vert_{L^2}^2+\epsilon t\Vert e_z\Vert_{L^2}^2+t| e_r|^2-t^2(N_1, (e_{\bm{v}})_t)+t^2(N_2, e_r)\\
&\leq \nu t\Vert \nabla e_{\bm{v}}\Vert_{L^2}^2+\epsilon t\Vert e_z\Vert_{L^2}^2+t|e_r|^2+C_\delta t^2\Vert (e_{\bm{v}})_t\Vert_{L^2}^2\\
&+Ct^2\left(\Vert \bm{u}_\epsilon\Vert_{H^2}^2+\Vert \bm{u}\Vert_{H^2}^2\right)(\Vert \nabla e_{\bm{v}}\Vert_{L^2}^2+| e_r|^2)+C\left(\Vert \bm{u}_\epsilon\Vert_{H^2}^2+\Vert \bm{u}\Vert_{H^2}^2\right)\epsilon^2
\end{split}
\end{equation}

By (\ref{the2-23}) and using the Gronwall inequality, we obtain
\begin{equation}\label{the2-25}
\begin{split}
t^2\Vert  \nabla e_{\bm{v}}\Vert_{L^2}^2+t^2\epsilon\Vert e_z\Vert_{L^2}^2+t^2|e_r|^2+\int_0^t s^2\Vert (e_{\bm{v}})_t(s)\Vert_{L^2}^2 \ ds\leq C(1+t^2)\epsilon^2\leq C\epsilon^2.
\end{split}
\end{equation}

Note that 
\begin{equation}
\begin{split}
\Vert N_1\Vert_{H^{-1}}^2\leq C\Vert \bm{u}_\epsilon\Vert_{H^1}^2|e_r|^2+C\left(\Vert\bm{u}\Vert_{H^1}^2+\Vert\bm{u}_\epsilon\Vert_{H^1}^2\right)(\Vert e_{\bm{v}}\Vert_{H^1}^2+\Vert \bm{\xi}\Vert_{H^1}^2).
\end{split}
\end{equation}
From (\ref{the2-12a}) together with the previous estimates, we derive
\begin{equation}
\begin{split}
&\int_0^{t} s^2\Vert e_z(s)\Vert_{L^2}^2 \ ds\leq \int_0^{t} s^2\Vert \nabla e_z(s)\Vert_{H^{-1}}^2 \ ds\\
&\leq C\left(\int_0^{t} s^2\Vert (e_{\bm{v}})_t(s)\Vert_{L^2}^2 \ ds+\int_0^{t} s^2\Vert  \nabla e_{\bm{v}}(s)\Vert_{L^2}^2 \ ds+\int_0^{t} s^2\Vert N_1(s)\Vert_{H^{-1}}^2 \ ds\right)\\
&\leq C\epsilon^2,
\end{split}
\end{equation}
which completes the proof.
\end{proof}

Therefore, the solution of this penalty formulation converges to the solution of the original Navier-Stokes equations in a certain norm as $\epsilon$ approaches zero.

\section{The numerical schemes and their stability}
\label{section3}
In this section, we first construct first- and second-order accurate P-SAV schemes for the Navier-Stokes equations based on the auxiliary problem (\ref{the2-1}), and then present stability results for these numerical schemes.

\subsection{Numerical schemes}

Let $N>0$ be a positive integer and $T$ be the final time of computation. We set $\Delta t=T/N$ to be the uniform time step, and $\bm{u}^n$ and $p^n$ to be the numerical approximation of the fluid velocity $\bm{u}_\epsilon$ and pressure $p_\epsilon$ at $t=n\Delta t$ $(n\leq N)$, respectively.

\textbf{The first-order scheme.}
The first-order time-discrete scheme based on the penalty method is given as follows:
\begin{subequations}\label{the3-1}
	\begin{align}
&\frac{\bm{u}^{n+1}-\bm{u}^{n}}{\Delta t}+q^{n+1}\left((\bm{u}^{n} \cdot \nabla) \bm{u}^{n}+(\nabla\cdot \bm{u}^{n})\bm{u}^{n}\right)-\nu \Delta \bm{u}^{n+1}+\nabla p^{n+1}=\bm{f}^{n+1}, \label{the3-1a}\\
&\frac{q^{n+1}-q^{n}}{\Delta t}=\int_\Omega \left(\bm{u}^{n}\cdot \nabla \bm{u}^{n}\cdot \bm{u}^{n+1}+(\nabla\cdot \bm{u}^{n})\bm{u}^{n}\cdot \bm{u}^{n+1} \right) \ d\bm{x}, \label{the3-1b} \\
&\nabla\cdot \bm{u}^{n+1}=-\epsilon p^{n+1}. \label{the3-1c}
\end{align}
\end{subequations}

Here we provide the implementation details of the variable decoupling. By substituting equation (\ref{the3-1c}) into equation (\ref{the3-1a}), we obtain
\begin{equation}\label{the3-2}
\begin{split}
\frac{1}{\Delta t}\bm{u}^{n+1}-\nu\Delta \bm{u}^{n+1}-\frac{1}{\epsilon}\nabla\nabla\cdot \bm{u}^{n+1}=\frac{\bm{u}^n}{\Delta t}-q^{n+1}\left((\bm{u}^{n} \cdot \nabla) \bm{u}^{n}+(\nabla\cdot \bm{u}^{n})\bm{u}^{n}\right)+\bm{f}^{n+1}.
\end{split}
\end{equation}
We set $\bm{u}^{n+1}=\bm{u}_1^{n+1}+q^{n+1}\bm{u}_2^{n+1}$, (\ref{the3-2}) can be split into two elliptic equations with constant coefficients for the velocity components $\bm{u}_1^{n+1}$ and $\bm{u}_2^{n+1}$:
\begin{subequations}\label{the3-3}
	\begin{align}
&\left(\frac{1}{\Delta t}-\nu\Delta - \frac{1}{\epsilon}\nabla \nabla\cdot\right)\bm{u}_1^{n+1}=\frac{\bm{u}^{n}}{\Delta t}+\bm{f}^{n+1}, \label{3-3a}\\
&\left(\frac{1}{\Delta t}-\nu\Delta - \frac{1}{\epsilon}\nabla \nabla\cdot\right) \bm{u}_2^{n+1}=-\left((\bm{u}^{n} \cdot \nabla) \bm{u}^{n}+(\nabla\cdot \bm{u}^{n})\bm{u}^{n}\right). \label{3-3b} 
\end{align}
\end{subequations}
Note that the boundary condtion $\bm{u}_1^{n+1}=\bm{u}_2^{n+1}=0$.

By solving (\ref{the3-3}), the velocity components $\bm{u}_1^{n+1}$ and $\bm{u}_2^{n+1}$ are obtained. The auxiliary variable $q^{n+1}$ is subsequently determined explicitly by substituting $\bm{u}^{n+1}=\bm{u}_1^{n+1}+q^{n+1}\bm{u}_2^{n+1}$ into (\ref{the3-1b}):
\begin{equation}\label{the3-4}
\begin{split}
&\left(1-{\Delta t}\int_\Omega \left(\bm{u}^{n}\cdot \nabla\bm{u}^{n}+(\nabla\cdot\bm{u}^{n})\bm{u}^{n}\right)\cdot \bm{u}_2^{n+1} \ d\bm{x}\right)q^{n+1}\\
&=q^n+{\Delta t}\int_\Omega \left(\bm{u}^{n}\cdot \nabla\bm{u}^{n}+(\nabla\cdot\bm{u}^{n})\bm{u}^{n}\right)\cdot \bm{u}_1^{n+1} \ d\bm{x}.
\end{split}
\end{equation}
The pressure $p^{n+1}$ can finally be obtained from (\ref{the3-1c}).

\textbf{The second-order scheme.}
Based on the second-order Crank--Nicolson approximation, we propose the following midpoint scheme:
\begin{subequations}\label{the3-5}
	\begin{align}
&\frac{\bm{u}^{n+1}-\bm{u}^{n}}{\Delta t}+q^{n+\frac{1}{2}}\left((\bm{\widetilde{u}}^{n+\frac{1}{2}} \cdot \nabla) \bm{\widetilde{u}}^{n+\frac{1}{2}}+(\nabla\cdot \bm{\widetilde{u}}^{n+\frac{1}{2}})\bm{\widetilde{u}}^{n+\frac{1}{2}}\right)-\nu \Delta \bm{u}^{n+\frac{1}{2}}+\nabla p^{n+\frac{1}{2}}=\bm{f}^{n+\frac{1}{2}}, \label{the3-5a}\\
&\frac{q^{n+1}-q^{n}}{\Delta t}=\int_\Omega \left(\bm{\widetilde{u}}^{n+\frac{1}{2}}\cdot \nabla \bm{\widetilde{u}}^{n+\frac{1}{2}}\cdot \bm{u}^{n+\frac{1}{2}}+(\nabla \cdot \bm{\widetilde{u}}^{n+\frac{1}{2}})\bm{\widetilde{u}}^{n+\frac{1}{2}}\cdot \bm{{u}}^{n+\frac{1}{2}}\right) \ d\bm{x}, \label{the3-5b} \\
&\nabla\cdot \bm{u}^{n+\frac{1}{2}}=-\epsilon p^{n+\frac{1}{2}}, \label{the3-5c}
\end{align}
\end{subequations}
where $\bm{\widetilde{u}}^{n+\frac{1}{2}}=\frac{3}{2}\bm{u}^{n}-\frac{1}{2}\bm{u}^{n-1}$. For $n=0$, we use the first-order scheme (\ref{the3-1}) to compute $(\bm{\widetilde{u}}^{\frac{1}{2}}, \widetilde{q}^{\frac{1}{2}})$. It is easy to see that the second‑order time‑stepping scheme can be implemented by using the same procedure as the first‑order scheme.

\begin{remark}
From the computational procedure, it is evident that the proposed first- and second-order P-SAV schemes do not require specifying pressure boundary conditions and are simple to implement. At each time step, the main computational cost is to solve two elliptic equations with the same coefficient matrix. Furthermore, it should be noted that the discrete systems corresponding to all time steps share a single constant coefficient matrix, and therefore the algorithm has excellent computational efficiency.
\end{remark}

\begin{remark}
The semi-discrete scheme proposed above is compatible with any choice of Galerkin method for spatial discretization. In this paper, we use the finite element method for spatial discretization.
\end{remark}

\subsection{Strong stability results}
We first provide a low-order stability result for the first- and second-order P-SAV schemes (\ref{the3-1}) and (\ref{the3-5}), which include energy stability.

\begin{theorem}
\label{theorem3-3}
Let $\Vert \bm{f}\Vert_{L^2}\leq C_f$. For first-order and second-order time-discrete schemes, there exists a constant $C_{f,\bm{u}_0,T, \nu, \Omega}$ that depends on $C_f, \bm{u}_0, T, \nu$ and the domain $\Omega$ such that
\begin{equation}\label{the3-6}
\begin{split}
\Vert \bm{u}^{n+1}\Vert_{L^2}^2+\Delta t\sum\limits_{i=0}^{n}\Vert \nabla \bm{u}^{i+1}\Vert_{L^2}^2+(q^{n+1})^2\leq C_{f,\bm{u}_0,T, \Omega, \nu}, \  \text{for} \ n\leq N-1.
\end{split}
\end{equation}
In particular, if $\bm{f}=0$, first- and second-order schemes (\ref{the3-1}) and (\ref{the3-3}) are unconditional energy stable in the sense that the following discrete energy laws hold:
\begin{equation}\label{the3-7}
\begin{split}
\mathcal{E}(\bm{u}^{n+1}, q^{n+1})-\mathcal{E}(\bm{u}^{n}, q^{n})+\frac{1}{2}(\Vert\bm{u}^{n+1}-\bm{u}^{n}\Vert_{L^2}^2+|q^{n+1}-q^n|^2)+\frac{\Delta t}{\epsilon}\Vert\nabla\cdot \bm{u}^{n+1}\Vert_{L^2}^2=-\Delta t\nu\Vert\nabla\bm{u}^{n+1}\Vert_{L^2}^2\leq 0,
\end{split}
\end{equation}
and
\begin{equation}\label{the3-8}
\begin{split}
\mathcal{E}(\bm{u}^{n+1}, q^{n+1})-\mathcal{E}(\bm{u}^{n}, q^{n})+\frac{\Delta t}{\epsilon}\Vert\nabla\cdot \bm{u}^{n+\frac{1}{2}}\Vert_{L^2}^2=-\Delta t\nu\Vert\nabla\bm{u}^{n+\frac{1}{2}}\Vert_{L^2}^2\leq 0,
\end{split}
\end{equation}
respectively. Here, the discrete energy $\mathcal{E}$ is defined as $\mathcal{E}(\bm{u}^{n}, q^{n})=\frac{1}{2}\Vert\bm{u}^{n}\Vert_{L^2}^2+\frac{(q^{n})^2}{2}-\frac{1}{2}$.
\end{theorem}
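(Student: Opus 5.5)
The plan is the standard SAV energy argument. For the first-order scheme (\ref{the3-1}), take the $L^2$ inner product of (\ref{the3-1a}) with $\Delta t\,\bm{u}^{n+1}$ and multiply (\ref{the3-1b}) by $\Delta t\, q^{n+1}$. The nonlinear contributions are
$$\Delta t\, q^{n+1}\int_\Omega \left((\bm{u}^n\cdot\nabla)\bm{u}^n+(\nabla\cdot\bm{u}^n)\bm{u}^n\right)\cdot\bm{u}^{n+1}\,d\bm{x}$$
in the momentum equation and exactly the same quantity with a minus sign in the $q$-equation, so they cancel when the two identities are added. This cancellation is why (\ref{the3-1b}) is tested against $\bm{u}^{n+1}$ rather than $\bm{u}^n$. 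For the remaining terms:
\begin{itemize}
\item The pressure term, after integration by parts with $\bm{u}^{n+1}|_{\partial\Omega}=0$ and (\ref{the3-1c}), gives $-(p^{n+1},\nabla\cdot\bm{u}^{n+1})=\frac1\epsilon\Vert\nabla\cdot\bm{u}^{n+1}\Vert_{L^2}^2$.
\item The viscous term gives $\nu\Vert\nabla\bm{u}^{n+1}\Vert_{L^2}^2$.
\item The time differences are handled by $2(a-b,a)=|a|^2-|b|^2+|a-b|^2$, applied to both $\bm{u}$ and $q$.
\end{itemize}
With $\bm{f}=0$ this yields exactly (\ref{the3-7}) with the stated $\mathcal{E}$; the constant $-\frac12$ drops out of the difference.

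For the Crank--Nicolson scheme (\ref{the3-5}), test (\ref{the3-5a}) with $\Delta t\,\bm{u}^{n+\frac12}=\frac{\Delta t}{2}(\bm{u}^{n+1}+\bm{u}^n)$ and multiply (\ref{the3-5b}) by $\Delta t\, q^{n+\frac12}$. The identity $(a-b,\frac{a+b}{2})=\frac12(|a|^2-|b|^2)$ produces telescoping differences with no dissipation remainder. The nonlinear terms again cancel, since (\ref{the3-5b}) is tested against $\bm{u}^{n+\frac12}$, and (\ref{the3-5c}) converts the pressure term into $\frac1\epsilon\Vert\nabla\cdot\bm{u}^{n+\frac12}\Vert_{L^2}^2$. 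This gives (\ref{the3-8}). The first step, computed by (\ref{the3-1}), is already covered by the first part.

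For general $\bm{f}$ with $\Vert\bm{f}\Vert_{L^2}\le C_f$, the same identities contain the extra term $\Delta t(\bm{f}^{n+1},\bm{u}^{n+1})$, or $\Delta t(\bm{f}^{n+\frac12},\bm{u}^{n+\frac12})$ in the second-order case. Bound it by Cauchy--Schwarz and the Poincar\'e inequality:
$$\Delta t\,C_\Omega\Vert\bm{f}\Vert_{L^2}\Vert\nabla\bm{u}^{n+1}\Vert_{L^2}\le \frac{\nu\Delta t}{2}\Vert\nabla\bm{u}^{n+1}\Vert_{L^2}^2+\frac{C_\Omega^2\Delta t}{2\nu}C_f^2.$$
The gradient part is absorbed into the viscous term. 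The nonnegative terms $\frac{\Delta t}{\epsilon}\Vert\nabla\cdot\bm{u}\Vert_{L^2}^2$ and the increments are discarded. Summing from $0$ to $n$ telescopes to
$$\Vert\bm{u}^{n+1}\Vert_{L^2}^2+(q^{n+1})^2+\nu\Delta t\sum_{i=0}^n\Vert\nabla\bm{u}^{i+1}\Vert_{L^2}^2\le \Vert\bm{u}_0\Vert_{L^2}^2+1+\frac{C_\Omega^2 C_f^2T}{\nu}.$$
No Gronwall argument is needed, so the constant is independent of $\Delta t$ and $\epsilon$, which gives (\ref{the3-6}). In the second-order case the estimate controls $\Delta t\sum\Vert\nabla\bm{u}^{i+\frac12}\Vert_{L^2}^2$ rather than $\Delta t\sum\Vert\nabla\bm{u}^{i+1}\Vert_{L^2}^2$.

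The main obstacle is exactly this mismatch. Recovering a bound on $\Delta t\sum_i\Vert\nabla\bm{u}^{i+1}\Vert_{L^2}^2$ from the midpoint quantities is not immediate, since $\bm{u}^{i+1}=2\bm{u}^{i+\frac12}-\bm{u}^i$ only yields a recursion with factor $1$ that does not close without loss. I would therefore first state the second-order bound in terms of $\bm{u}^{i+\frac12}$, which is what the energy method naturally gives. If the $\bm{u}^{i+1}$ form is required, I would try to obtain it from the implicit structure of the scheme through an additional test (for example with $\bm{u}^{n+1}-\bm{u}^n$), or otherwise interpret (\ref{the3-6}) for the second-order scheme in the midpoint sense.
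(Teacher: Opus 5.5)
Your proposal follows the paper's argument: test with $\Delta t\,\bm{u}^{n+1}$ (resp.\ $\Delta t\,\bm{u}^{n+\frac12}$), multiply the $q$-equation by $\Delta t\,q^{n+1}$ (resp.\ $\Delta t\,q^{n+\frac12}$) so the nonlinear terms cancel, convert the pressure term into $\frac{\Delta t}{\epsilon}\Vert\nabla\cdot\bm{u}\Vert_{L^2}^2$ via the penalty relation, absorb $\bm{f}$ by Poincar\'e and Young, and telescope. Your second-order multiplier is the right one; the paper's ``$\frac{\Delta t(q^{n+1}-q^n)}{2}$'' is a slip and would not give the cancellation.

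The mismatch you flag for the Crank--Nicolson scheme is genuine, and the paper does not resolve it either. It simply sums its midpoint inequality (\ref{the3-14}), which controls only $\Delta t\sum_i\Vert\nabla\bm{u}^{i+\frac12}\Vert_{L^2}^2$. So the midpoint version is exactly what both arguments establish. The endpoint form of (\ref{the3-6}) would need an ingredient beyond the energy identity, which neither argument supplies.
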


\begin{proof}
We first consider the numerical stability of the first-order scheme (\ref{the3-1}). Taking the inner product of (\ref{the3-1a}) with $\Delta t\bm{u}^{n+1}$ and noting the identity
\begin{equation}\label{the3-9}
\begin{split}
(a-b, a)=\frac{1}{2}\left(a^2-b^2\right)+\frac{1}{2}(a-b)^2,
\end{split}
\end{equation}
we obtain
\begin{equation}\label{the3-10}
\begin{split}
&\frac{1}{2}\left(\Vert\bm{u}^{n+1}\Vert_{L^2}^2-\Vert\bm{u}^{n}\Vert_{L^2}^2\right)+\frac{1}{2}\Vert\bm{u}^{n+1}-\bm{u}^{n}\Vert_{L^2}^2+\Delta t\nu\Vert\nabla\bm{u}^{n+1}\Vert_{L^2}^2\\
&+\Delta t\left(\nabla p^{n+1},\bm{u}^{n+1}\right)+\Delta tq^{n+1}\int_\Omega \left(\bm{u}^{n}\cdot \nabla \bm{u}^{n}+(\nabla\cdot \bm{u}^n)\bm{u}^n \right)\cdot \bm{u}^{n+1} \ d\bm{x}\\
&= \Delta t\left(\bm{f}^{n+1}, \bm{u}^{n+1}\right).
\end{split}
\end{equation}

Multiplying (\ref{the3-1b}) with $\Delta t q^{n+1}$, we have
\begin{equation}\label{the3-11}
\begin{split}
&\frac{1}{2}\left(|q^{n+1}|^2-|q^n|^2+|q^{n+1}-q^n|^2\right)=\Delta tq^{n+1}\int_\Omega \left(\bm{u}^{n}\cdot \nabla \bm{u}^{n}+(\nabla\cdot \bm{u}^n)\bm{u}^n \right)\cdot \bm{u}^{n+1} \ d\bm{x}.
\end{split}
\end{equation}

Substituting (\ref{the3-1c}) into (\ref{the3-10}), combining (\ref{the3-11}) and using the Poincar\'{e} inequality yields that
\begin{equation}\label{the3-12}
\begin{split}
&\frac{1}{2}\left(\Vert\bm{u}^{n+1}\Vert_{L^2}^2-\Vert\bm{u}^{n}\Vert_{L^2}^2+\Vert\bm{u}^{n+1}-\bm{u}^{n}\Vert_{L^2}^2\right)+\frac{1}{2}\left(|q^{n+1}|^2-|q^{n}|^2+|q^{n+1}-q^{n}|^2\right)\\
&+\Delta t\nu\Vert\nabla\bm{u}^{n+1}\Vert_{L^2}^2+\frac{\Delta t}{\epsilon}\Vert\nabla\cdot \bm{u}^{n+1}\Vert_{L^2}^2\leq C_{\Omega,\nu}\Delta t\Vert\bm{f}^{n+1}\Vert_{L^2}^2+\frac{\nu\Delta t}{2}\Vert\nabla\bm{u}^{n+1}\Vert_{L^2}^2.
\end{split}
\end{equation}

Taking the sum of (\ref{the3-12}) for $n$ from $0$ to $m$, we obtain
\begin{equation}\label{the3-13}
\begin{split}
\Vert\bm{u}^{m+1}\Vert_{L^2}^2+(q^{m+1})^2+\Delta t\nu\sum\limits_{n=0}^{m}\Vert\nabla\bm{u}^{n+1}\Vert_{L^2}^2\leq C_{f,\bm{u}_0,T, \Omega, \nu}, \ \text{for} \ m\leq \frac{T}{\Delta t}-1.
\end{split}
\end{equation}

If the external force $\bm{f}=0$, the desired result (\ref{the3-7}) can be easily obtained from (\ref{the3-12}).

For the second-order scheme, we can derive a similar energy estimate. By taking the inner product of (\ref{the3-5a}) with $\frac{\Delta t(\bm{u}^{n+1}+\bm{u}^{n})}{2}$, multiplying (\ref{the3-5b}) with $\frac{\Delta t (q^{n+1}-q^n)}{2}$ and combining (\ref{the3-5c}), we have
\begin{equation}\label{the3-14}
\begin{split}
&\frac{1}{2}\left(\Vert\bm{u}^{n+1}\Vert_{L^2}^2-\Vert\bm{u}^{n}\Vert_{L^2}^2\right)+\frac{1}{2}(q^{n+1}-q^{n})+\frac{\Delta t\nu}{2}\Vert\nabla\bm{u}^{n+\frac{1}{2}}\Vert_{L^2}^2+\frac{\Delta t}{\epsilon}\Vert\nabla\cdot \bm{u}^{n+\frac{1}{2}}\Vert_{L^2}^2\leq C_{\bm{f}, \Omega,\nu}\Delta t.
\end{split}
\end{equation}
Then, summing (\ref{the3-14}) yields (\ref{the3-6}). If $\bm{f}=0$, we can directly obtain (\ref{the3-8}).
\end{proof}

We next present a high-order stability result, which is global in two-dimensions and local in three-dimensions.

\begin{theorem}
\label{theorem3-4}
For $d=2$, the first-order and second-order schemes satisfy the following global boundedness property:
\begin{equation}\label{the3-15}
\begin{split}
\Vert \nabla \bm{u}^{n+1}\Vert_{L^2}^2+\Delta t\sum\limits_{i=0}^{n}\Vert \Delta \bm{u}^{i+1}\Vert_{L^2}^2\leq C_{f,\bm{u}_0,T, \Omega, \nu}, \  \text{for} \ n\leq \frac{T}{\Delta t}-1.
\end{split}
\end{equation}
For $d=3$, the boundedness holds locally. Specifically, there exists $T^{*}>0$ such that for $0<T<T^*$, we have (\ref{the3-15}).
\end{theorem}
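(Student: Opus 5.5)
We test \eqref{the3-1a} with $-\Delta t\,\Delta\bm{u}^{n+1}$; for the second-order scheme the same computation applies with $-\Delta t\,\Delta\bm{u}^{n+\frac12}$. The linear part gives, using $\bm{u}^{n+1}|_{\partial\Omega}=0$ and \eqref{the3-9},
\begin{equation}\nonumber
\tfrac12\bigl(\Vert\nabla\bm{u}^{n+1}\Vert_{L^2}^2-\Vert\nabla\bm{u}^{n}\Vert_{L^2}^2+\Vert\nabla(\bm{u}^{n+1}-\bm{u}^n)\Vert_{L^2}^2\bigr)+\nu\Delta t\Vert\Delta\bm{u}^{n+1}\Vert_{L^2}^2 .
\end{equation}
The pressure term is handled through \eqref{the3-1c}. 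Since $-\nabla p^{n+1}=\epsilon^{-1}\nabla\nabla\cdot\bm{u}^{n+1}$, integration by parts gives $(\nabla p^{n+1},-\Delta\bm{u}^{n+1})=\epsilon^{-1}\Vert\nabla\nabla\cdot\bm{u}^{n+1}\Vert_{L^2}^2$ up to boundary terms. The cleanest way to avoid those boundary terms is to test with $\Delta t\,\mathcal{A}\bm{u}^{n+1}$ instead, i.e. to view \eqref{the3-2} as $\bm{u}^{n+1}-\bm{u}^n+\Delta t\,\mathcal{A}\bm{u}^{n+1}=\Delta t(\bm{f}^{n+1}-q^{n+1}N(\bm{u}^n))$, where $N(\bm{u})=(\bm{u}\cdot\nabla)\bm{u}+(\nabla\cdot\bm{u})\bm{u}$. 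This yields
\begin{equation}\nonumber
\tfrac12\bigl(\Vert\mathcal{A}^{\frac12}\bm{u}^{n+1}\Vert_{L^2}^2-\Vert\mathcal{A}^{\frac12}\bm{u}^{n}\Vert_{L^2}^2\bigr)+\Delta t\Vert\mathcal{A}\bm{u}^{n+1}\Vert_{L^2}^2\le \Delta t\bigl(|q^{n+1}|\,\Vert N(\bm{u}^n)\Vert_{L^2}+\Vert\bm{f}^{n+1}\Vert_{L^2}\bigr)\Vert\mathcal{A}\bm{u}^{n+1}\Vert_{L^2}.
\end{equation}
By \eqref{the2-4}, $\Vert\mathcal{A}\bm{u}\Vert_{L^2}$ controls $\Vert\Delta\bm{u}\Vert_{L^2}$ and $\Vert\mathcal{A}^{\frac12}\bm{u}\Vert_{L^2}$ controls $\Vert\nabla\bm{u}\Vert_{L^2}$. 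Since $\Vert\mathcal{A}^{\frac12}\bm{u}^0\Vert_{L^2}^2=\nu\Vert\nabla\bm{u}_0\Vert_{L^2}^2$ (because $\nabla\cdot\bm{u}_0=0$), the initial quantity is bounded independently of $\epsilon$. Theorem \ref{theorem3-3} gives the uniform bound $|q^{n+1}|\le C$, so only $\Vert N(\bm{u}^n)\Vert_{L^2}$ remains to be estimated.

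The nonlinear term is the key step, and here the explicit treatment is the obstacle: $N$ is evaluated at $\bm{u}^n$, while the dissipation is at $\bm{u}^{n+1}$. In two dimensions, Ladyzhenskaya and Agmon-type inequalities give
\begin{equation}\nonumber
\Vert N(\bm{u}^n)\Vert_{L^2}\le C\Vert\bm{u}^n\Vert_{L^4}\Vert\nabla\bm{u}^n\Vert_{L^4}\le C\Vert\bm{u}^n\Vert_{L^2}^{\frac12}\Vert\nabla\bm{u}^n\Vert_{L^2}\Vert\Delta\bm{u}^n\Vert_{L^2}^{\frac12}.
\end{equation}
After Young's inequality this produces $\tfrac{\Delta t}{4}\Vert\mathcal{A}\bm{u}^{n+1}\Vert_{L^2}^2+C\Delta t\Vert\nabla\bm{u}^n\Vert_{L^2}^2\Vert\Delta\bm{u}^n\Vert_{L^2}$. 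The second term still contains $\Delta\bm{u}^n$ and cannot be absorbed at the same level. I would split it once more as $\tfrac{\Delta t}{4}\Vert\mathcal{A}\bm{u}^n\Vert_{L^2}^2+C\Delta t\Vert\nabla\bm{u}^n\Vert_{L^2}^4$. The first piece is absorbed after summation in $n$, which telescopes the dissipation because the index is shifted by one; the $n=0$ term needs $\bm{u}_0\in H^2$ or a separate first step. Write $z_n=\Vert\mathcal{A}^{\frac12}\bm{u}^n\Vert_{L^2}^2$. This yields
\begin{equation}\nonumber
z_{n+1}+\Delta t\sum_{i\le n}\Vert\mathcal{A}\bm{u}^{i+1}\Vert_{L^2}^2\le C+C\Delta t\sum_{i\le n}\Vert\nabla\bm{u}^i\Vert_{L^2}^2\, z_i .
\end{equation}
Theorem \ref{theorem3-3} gives $\Delta t\sum_i\Vert\nabla\bm{u}^i\Vert_{L^2}^2\le C$, so the discrete Gronwall lemma yields \eqref{the3-15} globally, with no step-size restriction because the Gronwall coefficient is summable regardless of $\Delta t$. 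This is exactly where the SAV variable is essential: it supplies the $\epsilon$-independent bound on $q$ and on $\Delta t\sum\Vert\nabla\bm{u}^i\Vert_{L^2}^2$.

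In three dimensions we instead use $\Vert N(\bm{u})\Vert_{L^2}\le C\Vert\nabla\bm{u}\Vert_{L^2}^{\frac32}\Vert\Delta\bm{u}\Vert_{L^2}^{\frac12}$. The same splitting then yields
\begin{equation}\nonumber
z_{n+1}+\Delta t\sum_{i\le n} w_i\le M+\Delta t\sum_{i\le n} G(z_i),\qquad G(z)=Cz^3,\quad w_i=\tfrac12\Vert\mathcal{A}\bm{u}^{i+1}\Vert_{L^2}^2 .
\end{equation}
Lemma \ref{lemma2-1} now applies with $T^*<\int_M^\infty dz/(Cz^3)$ and gives a uniform bound up to $T^*$, which is the local statement. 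For the second-order scheme the argument is identical, testing with $\mathcal{A}\bm{u}^{n+\frac12}$; the extrapolation $\widetilde{\bm{u}}^{n+\frac12}$ contributes terms at levels $n$ and $n-1$, each of which is absorbed by the dissipation from earlier steps in the same way.
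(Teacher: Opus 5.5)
For the first-order scheme your argument is the paper's proof. You test with $\Delta t\,\mathcal{A}\bm{u}^{n+1}$ and bound $\Vert N(\bm{u}^n)\Vert_{L^2}$ by Ladyzhenskaya/Gagliardo--Nirenberg. Young's inequality then splits it into $\Vert\mathcal{A}\bm{u}^{n+1}\Vert_{L^2}^2$, $\Vert\mathcal{A}\bm{u}^{n}\Vert_{L^2}^2$ and $\Vert\nabla\bm{u}^n\Vert_{L^2}^4$ (resp.\ $\Vert\nabla\bm{u}^n\Vert_{L^2}^6$ in 3-D). You absorb the index-shifted dissipation after summation. In 2-D you close with discrete Gronwall, using $\Delta t\sum_i\Vert\nabla\bm{u}^i\Vert_{L^2}^2\le C$ from Theorem~\ref{theorem3-3}; in 3-D you close with Lemma~\ref{lemma2-1} and $G(z)=Cz^3$. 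Your remark that the leftover $\Delta t\Vert\mathcal{A}\bm{u}^0\Vert_{L^2}^2$ needs $\bm{u}_0\in H^2$, or a separate first step, is correct. The paper passes over this point: it sums from $m=1$, yet Assumption~\ref{assumption2-1} only gives $\bm{u}_0\in H_0^1$.

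The step that does not work as written is your final sentence on the second-order scheme. Testing (\ref{the3-5a}) with $\Delta t\,\mathcal{A}\bm{u}^{n+\frac12}$ yields only the midpoint dissipation $\Delta t\Vert\mathcal{A}\bm{u}^{n+\frac12}\Vert_{L^2}^2$. The bound on the nonlinear term, however, involves $\Vert\Delta\widetilde{\bm{u}}^{n+\frac12}\Vert_{L^2}\le\frac32\Vert\Delta\bm{u}^{n}\Vert_{L^2}+\frac12\Vert\Delta\bm{u}^{n-1}\Vert_{L^2}$. The sum $\Delta t\sum_k\Vert\mathcal{A}\bm{u}^{k+\frac12}\Vert_{L^2}^2$ does not control $\Delta t\sum_k\Vert\mathcal{A}\bm{u}^{k}\Vert_{L^2}^2$. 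The Crank--Nicolson amplification factor $(1-\lambda\Delta t/2)/(1+\lambda\Delta t/2)$ tends to $-1$ on the stiff modes of $\mathcal{A}$, which contains $\epsilon^{-1}\nabla\nabla\cdot$. So $\bm{u}^k$ can alternate in sign while its midpoint averages stay small. Consequently the level-$n$ and level-$(n-1)$ terms cannot be ``absorbed by the dissipation from earlier steps''. Likewise, the quantity $\Delta t\sum\Vert\Delta\bm{u}^{i+1}\Vert_{L^2}^2$ in (\ref{the3-15}) is not what this test controls.

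The paper's proof makes the same one-line ``same argument'' claim, so this is a gap you share with it rather than a deviation. Closing it needs an extra ingredient. One option is separate control of $\Vert\mathcal{A}(\bm{u}^{n}-\bm{u}^{n-1})\Vert_{L^2}$, since $\widetilde{\bm{u}}^{n+\frac12}=\bm{u}^{n-\frac12}+(\bm{u}^n-\bm{u}^{n-1})$. Another is a BDF2-type time discretization, whose $G$-stability delivers $\Delta t\Vert\mathcal{A}\bm{u}^{n+1}\Vert_{L^2}^2$ directly.
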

\begin{proof}
We first consider the first-order scheme in the 2-D case. Taking the inner product of (\ref{the3-1a}) with $\Delta t\mathcal{A}\bm{u}^{n+1}=-\nu\Delta t\Delta \bm{u}^{n+1}-\frac{\Delta t}{\epsilon}\nabla\nabla\cdot \bm{u}^{n+1}$, we obtain
\begin{equation}\label{the3-16}
\begin{split}
&\Vert \nabla \bm{u}^{n+1}\Vert_{L^2}^2-\Vert \nabla \bm{u}^{n}\Vert_{L^2}^2+\frac{1}{\epsilon}(\Vert \nabla\cdot \bm{u}^{n+1}\Vert_{L^2}^2-\Vert \nabla\cdot \bm{u}^{n}\Vert_{L^2}^2)+\Delta t\Vert \mathcal{A}\bm{u}^{n+1} \Vert_{L^2}^2\\
&\leq C\Delta tq^{n+1}\left(\bm{u}^{n}\cdot\nabla \bm{u}^{n}+(\nabla\cdot \bm{u}^{n})\bm{u}^{n}, \mathcal{A}\bm{u}^{n+1}\right) +C\Delta t\Vert \bm{f}\Vert_{L^2}^2\\
&\leq C\Delta t\Vert \bm{u}^n\Vert_{L^2}^{\frac{1}{2}}\Vert \nabla\bm{u}^n\Vert_{L^2}^{\frac{1}{2}}\Vert \nabla \bm{u}^n\Vert_{L^2}^{\frac{1}{2}}\Vert \mathcal{A}\bm{u}^{n}\Vert_{L^2}^{\frac{1}{2}}\Vert \mathcal{A}\bm{u}^{n+1}\Vert_{L^2}+C\Delta t\Vert \bm{f}\Vert_{L^2}^2\\
&\leq C_\delta \Delta t(\Vert \mathcal{A}\bm{u}^{n}\Vert_{L^2}^2+\Vert \mathcal{A}\bm{u}^{n+1}\Vert_{L^2}^2)+C\Delta t\Vert \nabla\bm{u}^n\Vert_{L^2}^{4}+C\Delta t\Vert \bm{f}\Vert_{L^2}^2.
\end{split}
\end{equation}

Taking the sum of $m$ from $1$ to $n$ on (\ref{the3-16}), using (\ref{the2-4}) and the Gronwall inequality to obtain
\begin{equation}\label{the3-17}
\begin{split}
\Vert \nabla \bm{u}^{n+1}\Vert_{L^2}^2+\Delta t\sum\limits_{m=0}^{n}\Vert \Delta\bm{u}^{m+1}\Vert_{L^2}^2\leq \Vert \nabla \bm{u}^{n+1}\Vert_{L^2}^2+\Delta t\sum\limits_{m=0}^{n}\Vert \mathcal{A}\bm{u}^{m+1}\Vert_{L^2}^2\leq C_{f,\bm{u}_0,T, \Omega, \nu}.
\end{split}
\end{equation}

For the 3-D case, we estimate (\ref{the3-16}) by the Sobolev inequality,
\begin{equation}\label{the3-18}
\begin{split}
&\Vert \nabla \bm{u}^{n+1}\Vert_{L^2}^2-\Vert \nabla \bm{u}^{n}\Vert_{L^2}^2+\frac{1}{\epsilon}(\Vert \nabla\cdot \bm{u}^{n+1}\Vert_{L^2}^2-\Vert \nabla\cdot \bm{u}^{n}\Vert_{L^2}^2)+\Delta t\Vert \mathcal{A}\bm{u}^{n+1} \Vert_{L^2}^2\\
&\leq C\Delta tq^{n+1}\left(\bm{u}^{n}\cdot\nabla \bm{u}^{n}+(\nabla\cdot \bm{u}^{n})\bm{u}^{n}, \mathcal{A}\bm{u}^{n+1}\right) +C\Delta t\Vert \bm{f}\Vert_{L^2}^2\\
&\leq C\Delta t\Vert \nabla\bm{u}^n\Vert_{L^2}\Vert \nabla \bm{u}^n\Vert_{L^2}^{\frac{1}{2}}\Vert \mathcal{A}\bm{u}^{n}\Vert_{L^2}^{\frac{1}{2}}\Vert \mathcal{A}\bm{u}^{n+1}\Vert_{L^2}+C\Delta t\Vert \bm{f}\Vert_{L^2}^2\\
&\leq C_\delta \Delta t(\Vert \mathcal{A}\bm{u}^{n}\Vert_{L^2}^2+\Vert \mathcal{A}\bm{u}^{n+1}\Vert_{L^2}^2)+C\Delta t\Vert \nabla\bm{u}^n\Vert_{L^2}^{6}+C\Delta t\Vert \bm{f}\Vert_{L^2}^2.
\end{split}
\end{equation}

Taking the sum on (\ref{the3-18}), we have 
\begin{equation}\label{the3-19}
\begin{split}
\Vert \nabla \bm{u}^{n+1}\Vert_{L^2}^2+\Delta t\sum\limits_{m=0}^{n}\Vert \Delta\bm{u}^{m+1}\Vert_{L^2}^2\leq C_1+C\Delta t\Vert \nabla \bm{u}^{n+1}\Vert_{L^2}^6.
\end{split}
\end{equation}
Let $G(x)=x^3$ and $0<T^{*}<\int_{C_1}^{\infty} \ dz/G(z)=\frac{3}{2}(C_1)^{-2}$. With the help of Lemma \ref{lemma2-1}, there exists a constant $C_{2}$ independent of $\Delta t$ such that 
\begin{equation}\label{the3-20}
\begin{split}
\Vert \nabla \bm{u}^{n+1}\Vert_{L^2}^2+\Delta t\sum\limits_{m=0}^{n}\Vert \Delta\bm{u}^{m+1}\Vert_{L^2}^2\leq C_2.
\end{split}
\end{equation}

For the second-order scheme, taking the inner product of (\ref{the3-5a}) with $\Delta t\mathcal{A}\bm{u}^{n+\frac{1}{2}}$ and using the same argument, we can easily obtain (\ref{the3-15}).
\end{proof}

\section{Error analysis}
\label{section4}
In this section, we carry out the rigorous error estimate for the first-order scheme (\ref{the3-1}). For the second-order scheme, we can follow the same steps for analysis, with only certain regularity assumptions. For clarity of presentation, we restrict our analysis here to the proof of the first‑order scheme.

We first need additional regularity of the solutions to the auxiliary system (\ref{the2-1}).

\begin{lemma}
\label{lemma4-1}
Let Assumption \ref{assumption2-1} holds. Then the solutions $\bm{u}_{\epsilon}$ and $q$ of the auxiliary system (\ref{the2-1}) satisfy
\begin{equation}\label{the4-1}
\begin{split}
\frac{\partial \bm{u}_\epsilon}{\partial t}\in L^\infty(0,T;L^2)\cap L^2(0,T; H^1), \quad \frac{\partial^2 \bm{u}_\epsilon}{\partial t^2}\in L^2(0,T; L^2), \quad \frac{d^2 q}{d t^2}\in L^2(0,T).
\end{split}
\end{equation}
\end{lemma}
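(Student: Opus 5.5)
We prove the three regularity statements in \eqref{the4-1} by a priori energy estimates on the time-differentiated auxiliary system \eqref{the2-1}, working formally (or on Galerkin approximations, then passing to the limit). The available regularity is \eqref{the2-7}, i.e. $\bm{u}_\epsilon\in L^\infty(0,T;H^1)\cap L^2(0,T;H^2)$, together with the bound on $q$ from the energy estimate of Section~\ref{section2}. Throughout we use the penalty relation \eqref{the2-1c} to write $\nabla p_\epsilon=-\frac1\epsilon\nabla\nabla\cdot\bm{u}_\epsilon$, so that \eqref{the2-1a} reads $\partial_t\bm{u}_\epsilon+\mathcal{A}\bm{u}_\epsilon=\bm{f}-qN(\bm{u}_\epsilon)$ with $N(\bm{u})=(\bm{u}\cdot\nabla)\bm{u}+(\nabla\cdot\bm{u})\bm{u}$. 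All constants may depend on $\epsilon$; uniformity in $\epsilon$ is not claimed.

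\textbf{Step 1: $q$ is $W^{1,\infty}$ and $N(\bm u_\epsilon)\in L^2(0,T;L^2)$.}
\begin{itemize}
\item From \eqref{the2-1b} and \eqref{the2-2}, $|q'(t)|=|B(\bm{u}_\epsilon,\bm{u}_\epsilon,\bm{u}_\epsilon)|\le C\Vert\bm{u}_\epsilon\Vert_{H^1}^3$, so $q'\in L^\infty(0,T)$.
\item Also $\Vert N(\bm{u}_\epsilon)\Vert_{L^2}\le C\Vert\bm{u}_\epsilon\Vert_{H^1}\Vert\bm{u}_\epsilon\Vert_{H^2}$ (valid for $d\le3$), which lies in $L^2(0,T)$.
\item Hence $\partial_t\bm{u}_\epsilon=\bm{f}-qN(\bm u_\epsilon)-\mathcal{A}\bm{u}_\epsilon\in L^2(0,T;L^2)$.
\item At $t=0$ this identity gives $\partial_t\bm{u}_\epsilon(0)=\bm{f}(0)+\nu\Delta\bm{u}_0-N(\bm{u}_0)$, using $\nabla\cdot\bm u_0=0$ and $q(0)=1$.
\end{itemize}
The initial value $\partial_t\bm{u}_\epsilon(0)$ must lie in $L^2$. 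This needs $\bm u_0\in H^2$, which is more than Assumption~\ref{assumption2-1} literally states. I would either add this tacitly, as is standard for such lemmas, or run the argument with the weight $t$ as in Lemma~\ref{lemma2-2}. I plan the unweighted version, assuming $\bm u_0\in H^2\cap H^1_0$.

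\textbf{Step 2: the key estimate for $\bm w=\partial_t\bm{u}_\epsilon$.} Differentiating in time gives
\begin{equation}\nonumber
\bm w_t+\mathcal{A}\bm w=\bm f_t-q'N(\bm u_\epsilon)-q\big(B'(\bm w,\bm u_\epsilon)+B'(\bm u_\epsilon,\bm w)\big),
\end{equation}
where $B'$ denotes the bilinear pieces of $N$. Test this equation with $\bm w$. The terms $(\mathcal{A}\bm w,\bm w)$ give $\nu\Vert\nabla\bm w\Vert^2+\frac1\epsilon\Vert\nabla\cdot\bm w\Vert^2$. The remaining terms are bounded as follows:
\begin{itemize}
\item The source term and the $q'$ term are bounded by $C(\Vert\bm f_t\Vert^2+\Vert N(\bm u_\epsilon)\Vert^2+\Vert\bm w\Vert^2)$.
\item The convective terms are trilinear forms of the type $B(\cdot,\cdot,\bm w)$. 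By \eqref{the2-3} and Ladyzhenskaya/Agmon-type interpolation they are bounded by $\frac\nu2\Vert\nabla\bm w\Vert^2+C\Vert\bm u_\epsilon\Vert_{H^2}^2\Vert\bm w\Vert^2$ (in 3D, $C(\Vert\bm u_\epsilon\Vert_{H^1}^4+\Vert\bm u_\epsilon\Vert_{H^2}^2)\Vert\bm w\Vert^2$).
\end{itemize}
The Gronwall factor is integrable by \eqref{the2-7}. Gronwall then yields $\bm w\in L^\infty(0,T;L^2)\cap L^2(0,T;H^1)$.

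This is the main obstacle. The term $(\nabla\cdot\bm w)\bm u_\epsilon\cdot\bm w$ carries a derivative on $\bm w$ and must be absorbed via $\nu\Vert\nabla\bm w\Vert^2$, with the lower-order factor put in $L^4$/$L^\infty$. In 3D this requires the $H^2$ bound on $\bm u_\epsilon$, which in the paper is only local; accordingly the estimate holds on the time interval where \eqref{the2-7} is available.

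\textbf{Step 3: $\partial_{tt}\bm u_\epsilon$ and $q''$.}
\begin{itemize}
\item Test the differentiated equation with $\bm w_t$. The $\mathcal A$ term gives $\frac12\frac{d}{dt}\big(\nu\Vert\nabla\bm w\Vert^2+\frac1\epsilon\Vert\nabla\cdot\bm w\Vert^2\big)$, and the right side is bounded by $\frac12\Vert\bm w_t\Vert^2+C\Vert\bm u_\epsilon\Vert_{H^2}^2\Vert\nabla\bm w\Vert^2+\dots$.
\item Gronwall then gives $\bm w_t\in L^2(0,T;L^2)$. This needs $\nabla\bm w(0)\in L^2$, i.e. one more degree of compatibility of the data. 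Failing that, we multiply by $t$, which the weighted framework of Lemma~\ref{lemma2-2} accommodates.
\item Finally, $q''=\frac{d}{dt}B(\bm u_\epsilon,\bm u_\epsilon,\bm u_\epsilon)$ is a sum of three terms $B(\bm w,\bm u_\epsilon,\bm u_\epsilon)$ and permutations. Each is bounded by $C\Vert\bm w\Vert_{H^1}\Vert\bm u_\epsilon\Vert_{H^1}^2$ using \eqref{the2-2}, and this lies in $L^2(0,T)$ by Step 2.
\end{itemize}
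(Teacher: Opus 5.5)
Your proposal is essentially the paper's proof: differentiate \eqref{the2-1a} in time, test with $\partial_t\bm{u}_\epsilon$ and then with $\partial_{tt}\bm{u}_\epsilon$, close each estimate by Gronwall using \eqref{the2-7}, and obtain $q_{tt}\in L^2(0,T)$ from $\partial_t\bm{u}_\epsilon\in L^2(0,T;H^1)$ and $\bm{u}_\epsilon\in L^\infty(0,T;H^1)$. Your caveat on the data is correct: the paper's Gronwall steps \eqref{4-6} and \eqref{4-8} silently need $\Vert\partial_t\bm{u}_\epsilon(0)\Vert_{L^2}$ and $\Vert\mathcal{A}^{\frac{1}{2}}\partial_t\bm{u}_\epsilon(0)\Vert_{L^2}^2$ to be finite, and the latter contains $\frac{1}{\epsilon}\Vert\nabla\cdot(\bm{f}(0)-(\bm{u}_0\cdot\nabla)\bm{u}_0)\Vert_{L^2}^2$, generically of size $\epsilon^{-1}$, so declining to claim $\epsilon$-uniformity is forced, and you should keep the extra hypotheses rather than the $t$-weighted fallback, which only gives weighted bounds and not \eqref{the4-1} as stated.
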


\begin{proof}
Differentiating the equation (\ref{the2-1a}) with respect to $t$, we obtain
\begin{equation}\label{4-2}
\begin{split}
\frac{\partial^2 \bm{u}_{\epsilon}}{\partial t^2}+D(q,\bm{u}_\epsilon)+\nu\frac{\partial (\mathcal{A}\bm{u}_\epsilon)}{\partial t}=\bm{f}_t,
\end{split}
\end{equation}
where
\begin{equation}\label{4-3}
\begin{split}
D=q_t\left((\bm{u}_{\epsilon} \cdot \nabla) \bm{u}_{\epsilon}+(\nabla\cdot \bm{u}_\epsilon)\bm{u}_\epsilon\right)+q\left(\frac{\partial \bm{u}_\epsilon}{\partial t}\cdot \nabla \bm{u}_{\epsilon}+\bm{u}_{\epsilon}\cdot\frac{\partial (\nabla\bm{u}_\epsilon)}{\partial t} + \frac{\partial (\nabla\cdot \bm{u}_\epsilon)}{\partial t}\bm{u}_{\epsilon}+(\nabla\cdot \bm{u}_{\epsilon})\frac{\partial \bm{u}_\epsilon}{\partial t}\right).
\end{split}
\end{equation}

Taking the inner product of (\ref{4-2}) with $\frac{\partial \bm{u}_\epsilon}{\partial t}$, using (\ref{the2-2}) and (\ref{the2-4}), we have
\begin{equation}\label{4-4}
\begin{split}
&\frac{d}{dt}\left\Vert \frac{\partial \bm{u}_\epsilon}{\partial t}\right\Vert_{L^2}^2+\nu \left\Vert \frac{\partial(\mathcal{A}^{\frac{1}{2}}\bm{u}_\epsilon)}{\partial t}\right\Vert_{L^2}^2\leq C\Vert \bm{f}_t\Vert_{H^{-1}}-\left(D(q,\bm{u}_\epsilon), \frac{\partial \bm{u}_\epsilon}{\partial t}\right)\\
&\leq  C\Vert \bm{f}_t\Vert_{L^2}+C|q_t|\Vert \bm{u}_\epsilon\Vert_{H^1}^2\left\Vert \frac{\partial \bm{u}_\epsilon}{\partial t}\right\Vert_{H^1}+C|q|\left\Vert \frac{\partial \bm{u}_\epsilon}{\partial t}\right\Vert_{L^2}\Vert \bm{u}_\epsilon\Vert_{H^2}\left\Vert \frac{\partial \bm{u}_\epsilon}{\partial t}\right\Vert_{H^1}\\
&\leq C\Vert \bm{f}_t\Vert_{L^2}+C|q_t|^2+C\Vert \bm{u}_\epsilon\Vert_{H^2}^2\left\Vert \frac{\partial \bm{u}_\epsilon}{\partial t}\right\Vert_{L^2}^2+\frac{\nu}{2}\left\Vert \frac{\partial(\mathcal{A}^{\frac{1}{2}}\bm{u}_\epsilon)}{\partial t}\right\Vert_{L^2}^2.
\end{split}
\end{equation}
Note that
\begin{equation}\label{4-5}
\begin{split}
|q_t|^2=\left|\left(\bm{u}_\epsilon\cdot \nabla\bm{u}_\epsilon, \bm{u}_\epsilon\right)\right|^2\leq C\Vert \bm{u}_\epsilon\Vert_{H^2}^2.
\end{split}
\end{equation}

Integrating (\ref{4-4}) in $t$, and using the Gronwall inequality, we get
\begin{equation}\label{4-6}
\begin{split}
\left\Vert \frac{\partial \bm{u}_\epsilon}{\partial t}\right\Vert_{L^2}^2+\int_0^t \left\Vert \frac{\partial(\mathcal{A}^{\frac{1}{2}}\bm{u}_\epsilon)}{\partial s}\right\Vert_{L^2}^2 \ ds\leq C.
\end{split}
\end{equation}

Taking the inner product of (\ref{4-2}) with $\frac{\partial^2 \bm{u}_{\epsilon}}{\partial t^2}$, we derive
\begin{equation}\label{4-7}
\begin{split}
&\left\Vert \frac{\partial^2 \bm{u}_{\epsilon}}{\partial t^2}\right\Vert_{L^2}^2+\frac{d}{dt}\left\Vert\frac{\partial (\mathcal{A}^\frac{1}{2}\bm{u}_{\epsilon})}{\partial t}\right\Vert_{L^2}^2\leq C_\delta\left\Vert \frac{\partial^2 \bm{u}_{\epsilon}}{\partial t^2}\right\Vert_{L^2}^2+\Vert\bm{f}_t\Vert_{L^2}^2+C|q_t|\Vert\bm{u}_\epsilon\Vert_{H^2}^2+C\Vert\bm{u}_\epsilon\Vert_{H^2}^2\left\Vert \frac{\partial \bm{u}_\epsilon}{\partial t}\right\Vert_{H^1}\\
&\leq C_\delta\left\Vert \frac{\partial^2 \bm{u}_{\epsilon}}{\partial t^2}\right\Vert_{L^2}^2+\Vert\bm{f}_t\Vert_{L^2}^2+C\Vert\bm{u}_\epsilon\Vert_{H^2}^2+C\Vert\bm{u}_\epsilon\Vert_{H^2}^2\left\Vert\frac{\partial (\mathcal{A}^\frac{1}{2}\bm{u}_{\epsilon})}{\partial t}\right\Vert_{L^2}^2.
\end{split}
\end{equation}
Here, we used $|q_t|^2\leq C\Vert \bm{u}_\epsilon\Vert_{H^1}^2\leq C$.

Applying the Gronwall inequality it is easy to obtain 
\begin{equation}\label{4-8}
\begin{split}
\left\Vert\frac{\partial (\mathcal{A}^\frac{1}{2}\bm{u}_{\epsilon})}{\partial t}\right\Vert_{L^2}^2+\int_0^t \left\Vert \frac{\partial^2 \bm{u}_{\epsilon}}{\partial s^2}\right\Vert_{L^2}^2 \ ds\leq C.
\end{split}
\end{equation}

By using (\ref{4-6}) and (\ref{4-8}), we can directly calculate $q_{tt}$,
\begin{equation}\label{4-9}
\begin{split}
&\frac{d^2 q}{d t^2}=\int_{\Omega} \left(\frac{\partial \bm{u}_\epsilon}{\partial t}\cdot \nabla \bm{u}_\epsilon\cdot \bm{u}_\epsilon+\bm{u}_\epsilon\cdot \frac{\partial (\nabla\bm{u}_\epsilon)}{\partial t}\cdot \bm{u}_\epsilon+\bm{u}_\epsilon\cdot \nabla\bm{u}_\epsilon\cdot \frac{\partial \bm{u}_\epsilon}{\partial t}+\frac{\partial (\nabla\cdot\bm{u}_\epsilon)}{\partial t}\bm{u}_\epsilon^2+2(\nabla\cdot\bm{u}_\epsilon)\bm{u}_\epsilon\cdot \frac{\partial \bm{u}_\epsilon}{\partial t}\right) \ d\bm{x}\\
&\in L^2(0,T).
\end{split}
\end{equation}

This completes the proof.
\end{proof}

We set
\begin{equation}\nonumber
\begin{split}
e_{\bm{u}}^n=\bm{u}^{n}-\bm{u}_\epsilon(t^n), \quad p^{n}=p^n-p_\epsilon(t^n), \quad q^{n}=q^n-q(t^n).
\end{split}
\end{equation}
The truncation form of the system (\ref{the2-1}) is as follows:
\begin{subequations}\label{4-10}
	\begin{align}
&\frac{\bm{u}_\epsilon(t^{n+1})-\bm{u}_\epsilon(t^{n})}{\Delta t}+q(t^{n+1})\left((\bm{u}_\epsilon(t^n) \cdot \nabla) \bm{u}_\epsilon(t^n)+(\nabla\cdot\bm{u}_\epsilon(t^{n}))\bm{u}_\epsilon(t^{n})\right)-\nu \Delta \bm{u}_\epsilon(t^{n+1})+\nabla p_\epsilon(t^{n+1}) \label{4-10a}\\
&=\bm{f}(t^{n+1})+R_{\bm{u}}^{n+1}, \nonumber\\
&\frac{q(t^{n+1})-q(t^{n})}{\Delta t} =\int_\Omega \left(\bm{u}_\epsilon(t^{n})\cdot \nabla \bm{u}_\epsilon(t^{n})\cdot \bm{u}_\epsilon(t^{n+1})+(\nabla\cdot \bm{u}_\epsilon(t^{n}))\bm{u}_\epsilon(t^{n})\cdot \bm{u}_\epsilon(t^{n+1})\right) \ d\bm{x}+R_{q}^{n+1},\label{4-10b}\\
&\nabla\cdot \bm{u}_\epsilon(t^{n+1})=-\epsilon p_\epsilon(t^{n+1}). \label{4-10c}
\end{align}
\end{subequations}
Here, 
\begin{equation}\label{4-11}
\begin{split}
R_{\bm{u}}^{n+1}&=-\frac{1}{\Delta t}\int_{t^n}^{t^{n+1}}(t^{n}-s)\frac{\partial^2 \bm{u}_\epsilon}{\partial s^2}  ds-q(t^{n+1})\left(\nabla \bm{u}_\epsilon(t^{n+1})\int_{t^n}^{t^{n+1}}\frac{\partial \bm{u}_\epsilon}{\partial s}ds+\bm{u}_\epsilon(t^{n})\int_{t^n}^{t^{n+1}}\frac{\partial (\nabla\bm{u}_\epsilon)}{\partial s}ds\right)\\
&-q(t^{n+1})\left((\nabla \cdot\bm{u}_\epsilon(t^{n+1}))\int_{t^n}^{t^{n+1}}\frac{\partial \bm{u}_\epsilon}{\partial s}ds+\bm{u}_\epsilon(t^{n})\int_{t^n}^{t^{n+1}}\frac{\partial (\nabla\cdot\bm{u}_\epsilon)}{\partial s}ds\right),
\end{split}
\end{equation}
and 
\begin{equation}\label{4-12}
\begin{split}
R_{q}^{n+1}&=-\frac{1}{\Delta t}\int_{t^n}^{t^{n+1}}(t^{n}-s)\frac{d^2 q}{ds^2}  ds+\left(\nabla \bm{u}_\epsilon(t^{n+1})\int_{t^n}^{t^{n+1}}\frac{\partial \bm{u}_\epsilon}{\partial s}ds+\bm{u}_\epsilon(t^{n})\int_{t^n}^{t^{n+1}}\frac{\partial (\nabla\bm{u}_\epsilon)}{\partial s}ds, \bm{u}_\epsilon(t^{n+1})\right)\\
&+\left((\nabla \cdot\bm{u}_\epsilon(t^{n+1}))\int_{t^n}^{t^{n+1}}\frac{\partial \bm{u}_\epsilon}{\partial s}ds+\bm{u}_\epsilon(t^{n})\int_{t^n}^{t^{n+1}}\frac{\partial (\nabla\cdot\bm{u}_\epsilon)}{\partial s}ds,\bm{u}_\epsilon(t^{n+1})\right).
\end{split}
\end{equation}

Combining (\ref{the3-1}) and (\ref{4-10}), we thus obtain the error equations for $\bm{u}$, $p$ and $q$:
\begin{subequations}\label{4-13}
	\begin{align}
&\frac{e_{\bm{u}}^{n+1}-e_{\bm{u}}^{n}}{\Delta t}-\nu \Delta e_{\bm{u}}^{n+1}+\nabla e_p^{n+1}=I_1-R_{\bm{u}}^{n+1}, \label{4-13a}\\
&\frac{e_q^{n+1}-e_q^{n}}{\Delta t}=I_2-R_{q}^{n+1}, \label{4-13b} \\
&\nabla\cdot e_{\bm{u}}^{n+1}=-\epsilon e_{p}^{n+1}, \label{4-13c}
\end{align}
\end{subequations}
where
\begin{equation}\label{4-14}
\begin{split}
I_1&=q(t^{n+1})(\bm{u}_\epsilon(t^n) \cdot \nabla) \bm{u}_\epsilon(t^n)-q^{n+1}(\bm{u}^{n} \cdot \nabla) \bm{u}^{n}+q(t^{n+1})(\nabla\cdot\bm{u}_\epsilon(t^{n}))\bm{u}_\epsilon(t^{n})-q^{n+1}(\nabla\cdot\bm{u}^n)\bm{u}^n\\
&=-e_{q}^{n+1}\bm{u}^{n}\cdot \nabla \bm{u}^{n}-q(t^{n+1})\bm{u}^n\cdot \nabla e_{\bm{u}}^n -q(t^{n+1})e_{\bm{u}}^n\cdot \nabla \bm{u}_\epsilon(t^{n}) \\
&-e_q^{n+1}(\nabla\cdot\bm{u}^n)\bm{u}^n-q(t^{n+1}) (\nabla\cdot \bm{u}^n)e_{\bm{u}}^n-  q(t^{n+1}) (\nabla\cdot e_{\bm{u}}^n)\bm{u}_\epsilon(t^{n})\\
&=:-\sum\limits_{i=1}^{6}I_{1i},
\end{split}
\end{equation}
and 
\begin{equation}\label{4-15}
\begin{split}
I_2&=\int_\Omega \bm{u}^{n}\cdot \nabla \bm{u}^{n}\cdot \bm{u}^{n+1} \ d\bm{x}-\int_\Omega \bm{u}_\epsilon(t^{n})\cdot \nabla \bm{u}_\epsilon(t^{n})\cdot \bm{u}_\epsilon(t^{n+1}) \ d\bm{x}\\
&+\int_{\Omega}\left((\nabla\cdot \bm{u}^n)\bm{u}^n\cdot \bm{u}^{n+1}-(\nabla\cdot \bm{u}_\epsilon(t^{n}))\bm{u}_\epsilon(t^{n})\cdot \bm{u}_\epsilon(t^{n+1})\right) \ d\bm{x}\\
&=\int_\Omega {\bm{u}}^{n}\cdot \nabla {\bm{u}}^{n}\cdot e_{\bm{u}}^{n+1} \ d\bm{x}+\int_\Omega {\bm{u}}^{n}\cdot \nabla e_{\bm{u}}^{n}\cdot \bm{u}_\epsilon(t^{n+1}) \ d\bm{x} + \int_\Omega e_{\bm{u}}^n\cdot \nabla \bm{u}_\epsilon(t^{n})\cdot \bm{u}_\epsilon(t^{n+1}) \ d\bm{x} \\
&+\int_\Omega (\nabla\cdot \bm{u}^n)\bm{u}^n\cdot e_{\bm{u}}^{n+1} \ d\bm{x}+\int_\Omega (\nabla\cdot \bm{u}^n)e_{\bm{u}}^{n}\cdot \bm{u}_\epsilon(t^{n+1}) \ d\bm{x}+\int_\Omega (\nabla\cdot e_{\bm{u}}^{n})\bm{u}_\epsilon(t^{n})\cdot \bm{u}_\epsilon(t^{n+1}) \ d\bm{x}\\
&=:\sum\limits_{i=1}^{6}I_{2i}.
\end{split}
\end{equation}

We first consider the error analysis in the two-dimensional case.

\begin{theorem}
\label{theorem4-2}
Let $d=2$. For the first-order P-SAV scheme (\ref{the3-1}), there exists a positive constant $C_{T42}$ independent of $\epsilon$ and $\Delta t$ such that 
\begin{equation}\label{4-16}
\begin{split}
\Vert e_{\bm{u}}^{n}\Vert_{H^1}^2+ \Delta t\sum\limits_{m=0}^{n}\Vert e_{\bm{u}}^{m}\Vert_{H^2}^2+ \Delta t\sum\limits_{m=0}^{n} (t^{m+1})^2\Vert e_p^{m+1}\Vert_{L^2}^2+|e_q^{n}|^2\leq C_{T42}(\Delta t)^2, \ \forall n\leq T_0/\Delta t.
\end{split}
\end{equation}
\end{theorem}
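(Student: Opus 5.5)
We will prove the estimate by a discrete energy argument at two levels, closed with the discrete Gronwall inequality, relying on the uniform bounds of Theorems \ref{theorem3-3} and \ref{theorem3-4} for the numerical solution and on Lemma \ref{lemma4-1} for the exact solution.

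\textbf{Truncation errors.} By Lemma \ref{lemma4-1} and Assumption \ref{assumption2-1} we first bound the truncation errors:
\[
\Delta t\sum_n\Vert R_{\bm{u}}^{n+1}\Vert_{L^2}^2+\Delta t\sum_n|R_q^{n+1}|^2\leq C(\Delta t)^2 .
\]
This uses the Taylor remainder with $\partial_{tt}\bm{u}_\epsilon\in L^2(0,T;L^2)$ and $q_{tt}\in L^2(0,T)$. For the remaining terms, $\int_{t^n}^{t^{n+1}}\partial_s\bm{u}_\epsilon\,ds$ is controlled in $H^1$ via $\partial_t\bm{u}_\epsilon\in L^2(0,T;H^1)$ together with $\bm{u}_\epsilon\in L^\infty(H^1)\cap L^2(H^2)$, using \eqref{the2-2}.

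\textbf{Step 1: $L^2$-level estimate.} Take the inner product of \eqref{4-13a} with $\Delta t\, e_{\bm{u}}^{n+1}$ and multiply \eqref{4-13b} by $\Delta t\, e_q^{n+1}$. Using \eqref{4-13c}, this produces
\[
\tfrac12\left(\Vert e_{\bm{u}}^{n+1}\Vert_{L^2}^2-\Vert e_{\bm{u}}^{n}\Vert_{L^2}^2\right)+\nu\Delta t\Vert\nabla e_{\bm{u}}^{n+1}\Vert_{L^2}^2+\tfrac{\Delta t}{\epsilon}\Vert\nabla\cdot e_{\bm{u}}^{n+1}\Vert_{L^2}^2
\]
plus the analogous $e_q$ difference. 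The key structural point is the SAV cancellation: $-(I_{11}+I_{14},e_{\bm{u}}^{n+1})\,\Delta t$ cancels exactly with $(I_{21}+I_{24})\,e_q^{n+1}\Delta t$. This is the reason for using the SAV form, since otherwise $e_q$ times a nonlinear quantity of the numerical solution would appear with no smallness.

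The remaining terms are handled as follows. The terms $I_{12},I_{13},I_{15},I_{16}$ are bounded by \eqref{the2-3} (2D) in the form $C\Vert \bm{u}^n\Vert_{H^1}\Vert e_{\bm{u}}^n\Vert_{H^1}\Vert e_{\bm{u}}^{n+1}\Vert_{H^1}$, or via the $B$ representation with $\Vert\bm{u}_\epsilon\Vert_{H^2}$ weights. The terms $I_{22},I_{23},I_{25},I_{26}$ multiplied by $e_q^{n+1}$ are bounded by $C(\Vert\bm{u}^n\Vert_{H^1}+\Vert \bm{u}_\epsilon\Vert_{H^2})\Vert e_{\bm{u}}^n\Vert_{H^1}|e_q^{n+1}|$.

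The difficulty is that these give $\Vert\nabla e_{\bm{u}}^n\Vert$ at the previous level with an $O(1)$ coefficient, which cannot be absorbed by $\nu\Delta t\Vert\nabla e_{\bm{u}}^{n+1}\Vert^2$ alone uniformly. We therefore do not close at this level and instead combine it with the next step.

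\textbf{Step 2: $H^1$-level estimate.} Take the inner product of \eqref{4-13a} with $\Delta t\,\mathcal{A}e_{\bm{u}}^{n+1}$, as in Theorem \ref{theorem3-4}. This yields
\[
\Vert\mathcal{A}^{1/2}e_{\bm{u}}^{n+1}\Vert^2-\Vert\mathcal{A}^{1/2}e_{\bm{u}}^{n}\Vert^2+\Delta t\Vert\mathcal{A}e_{\bm{u}}^{n+1}\Vert^2\leq \Delta t\,|(I_1-R_{\bm{u}}^{n+1},\mathcal{A}e_{\bm{u}}^{n+1})|,
\]
whose left side is equivalent to the $H^1$ and $H^2$ norms by \eqref{the2-4}, with constants independent of $\epsilon$.

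The right side is estimated term by term. For $I_{12}$ and $I_{16}$, which contain $\nabla e_{\bm{u}}^n$, we use the 2D Ladyzhenskaya inequality:
\[
\Vert \bm{u}^n\cdot\nabla e^n_{\bm{u}}\Vert_{L^2}\leq C\Vert\bm{u}^n\Vert_{L^2}^{1/2}\Vert\bm{u}^n\Vert_{H^1}^{1/2}\Vert e_{\bm{u}}^n\Vert_{H^1}^{1/2}\Vert e_{\bm{u}}^n\Vert_{H^2}^{1/2}.
\]
Young's inequality then splits this into $\delta\Vert\mathcal{A}e^n_{\bm{u}}\Vert^2+\delta\Vert\mathcal{A}e^{n+1}_{\bm{u}}\Vert^2+C\Vert e^n_{\bm{u}}\Vert_{H^1}^2$. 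This uses the uniform bound $\Vert\nabla\bm{u}^n\Vert\leq C$ from Theorem \ref{theorem3-4}, which holds unconditionally. The terms $I_{11}$ and $I_{14}$ are bounded by $|e_q^{n+1}|\,\Vert\bm{u}^n\Vert_{H^1}^{1/2}\Vert\bm{u}^n\Vert_{H^2}^{1/2}\Vert\mathcal{A}e^{n+1}_{\bm{u}}\Vert$, giving $C\Vert\bm{u}^n\Vert_{H^2}|e_q^{n+1}|^2+\delta\Vert \mathcal{A} e^{n+1}\Vert^2$. Here $\Delta t\sum\Vert\bm{u}^n\Vert_{H^2}^2\leq C$ by Theorem \ref{theorem3-4}, and this makes the Gronwall weight summable.

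\textbf{Closing the argument.} We add Step 2 to Step 1 and to the $e_q$ equation. The $e_q$ equation is bounded by $C(\Vert \bm{u}^n\Vert_{H^2}+\Vert\bm{u}_\epsilon\Vert_{H^2})^{2}|e_q^{n+1}|^2+C\Vert e^n_{\bm{u}}\Vert_{H^1}^2+C|R_q|^2$, with $I_{21}$ and $I_{24}$ handled either by the cancellation above or directly. Summing over $n$, the $\delta\Vert\mathcal{A}e^n\Vert^2$ terms are absorbed by shifting indices; this is valid since $e^0_{\bm{u}}=0$ and $e_q^0=0$.

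The weights $\Vert\bm{u}^n\Vert_{H^2}^2$ and $\Vert\bm{u}_\epsilon(t^n)\Vert_{H^2}^2$ are summable. However, they multiply unknowns at level $n+1$, so the discrete Gronwall inequality needs $\Delta t\Vert\bm{u}^n\Vert_{H^2}^2<1/2$. I expect this to be the main obstacle, since the aim is to avoid any time-step restriction. To handle it, I would place those weights on the level-$n$ quantity wherever possible, for example bounding $|e_q^{n+1}|\leq|e_q^n|+\Delta t|I_2-R_q|$ first. Failing that, I would use a version of Gronwall that tolerates finitely many large steps, since $\Delta t\Vert\bm{u}^n\Vert_{H^2}^2\leq C$ for each $n$.

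This procedure gives the $H^1$ and $H^2$ velocity bounds and the $|e_q^n|^2$ bound. The weighted pressure bound then follows, as in the end of the proof of Theorem \ref{theorem2-4}, from the inf-sup estimate
\[
\Vert e_p^{n+1}\Vert_{L^2}\leq C\Vert\nabla e_p^{n+1}\Vert_{H^{-1}},
\]
read off from \eqref{4-13a}. This requires an additional estimate of $\Delta t\sum (t^{m+1})^2\Vert (e^{m+1}_{\bm{u}}-e^m_{\bm{u}})/\Delta t\Vert^2$, obtained by testing \eqref{4-13a} with $(t^{n+1})^2(e^{n+1}_{\bm{u}}-e^n_{\bm{u}})$.
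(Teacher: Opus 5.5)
Your proof has a gap at the closing step, and its source is Step~1.

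\textbf{Step 1 does not close, but it can.} You bound $I_{12},I_{13},I_{15},I_{16}$ and $I_{22},\dots,I_{26}$ through $\Vert e_{\bm{u}}^n\Vert_{H^1}$. That leaves $\Vert\nabla e_{\bm{u}}^n\Vert_{L^2}$ with an $O(1)$ coefficient, and you then abandon the $L^2$ level. The missing idea is the reason the term $(\nabla\cdot\bm{u})\bm{u}$ was added (Remark~\ref{remark4-4}). The terms must be paired into the trilinear form $B$, whose representation $B(\bm{u}_1,\bm{u}_2,\bm{u}_3)=-\int_\Omega(\bm{u}_1\cdot\nabla\bm{u}_3)\cdot\bm{u}_2\,d\bm{x}$ moves the derivative off the old error:
\[
(I_{12}+I_{15},e_{\bm{u}}^{n+1})=-q(t^{n+1})\int_\Omega(\bm{u}^n\cdot\nabla e_{\bm{u}}^{n+1})\cdot e_{\bm{u}}^n\,d\bm{x}\le\delta\Vert\nabla e_{\bm{u}}^{n+1}\Vert_{L^2}^2+C\Vert\bm{u}^n\Vert_{H^2}^2\Vert e_{\bm{u}}^n\Vert_{L^2}^2 .
\]
The other terms pair the same way:
$(I_{13}+I_{16},e_{\bm{u}}^{n+1})=q(t^{n+1})B(e_{\bm{u}}^n,\bm{u}_\epsilon(t^n),e_{\bm{u}}^{n+1})$, $I_{22}+I_{25}=B(\bm{u}^n,e_{\bm{u}}^n,\bm{u}_\epsilon(t^{n+1}))$, and $I_{23}+I_{26}=B(e_{\bm{u}}^n,\bm{u}_\epsilon(t^n),\bm{u}_\epsilon(t^{n+1}))$.
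Applied to these $B$-forms, the 2D bound (\ref{the2-3}) gives the factor $\Vert e_{\bm{u}}^n\Vert_{L^2}^{1/2}\Vert e_{\bm{u}}^n\Vert_{H^1}^{1/2}$, not the $\Vert e_{\bm{u}}^n\Vert_{H^1}$ you wrote, and that also suffices. The old level then enters only through $\Vert e_{\bm{u}}^n\Vert_{L^2}$, and new-level terms carry freely small constants. So the $L^2$ estimate closes by itself for every $\Delta t$, giving $\Vert e_{\bm{u}}^{n+1}\Vert_{L^2}^2+|e_q^{n+1}|^2\le C(\Delta t)^2$.

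\textbf{Step 2 needs that bound.} Tested with $\mathcal{A}e_{\bm{u}}^{n+1}$, the term $I_{11}+I_{14}$ has no cancelling partner. Both unknowns in $\Delta t\,|e_q^{n+1}|\,\Vert\bm{u}^n\Vert_{H^2}^{1/2}\Vert\mathcal{A}e_{\bm{u}}^{n+1}\Vert_{L^2}$ sit at level $n+1$. Young's inequality therefore leaves $C_\delta\Delta t\Vert\bm{u}^n\Vert_{H^2}|e_q^{n+1}|^2$, and its coefficient cannot be made small by choosing $\delta$. The paper inserts the already proven $|e_q^{n+1}|^2\le C(\Delta t)^2$, which turns this into the data term $C(\Delta t)^3\Vert\bm{u}^n\Vert_{H^2}$. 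After that, every Gronwall weight multiplies level-$n$ quantities.

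In your merged argument the term stays an implicit weight. Closing then requires $\Delta t\Vert\bm{u}^n\Vert_{H^2}$ to be small, which is a step-size condition the theorem is meant to avoid. With the squared weights $\Delta t\Vert\bm{u}^n\Vert_{H^2}^2$ you actually wrote, Theorem~\ref{theorem3-4} bounds them but does not make them small, so even small $\Delta t$ does not help.

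\textbf{Your proposed remedies do not fix this.}
Substituting $|e_q^{n+1}|\le|e_q^n|+\Delta t|I_2-R_q^{n+1}|$ brings a new-level $e_{\bm{u}}^{n+1}$ back through $I_{21}+I_{24}$. That term is again controllable only for small $\Delta t$.
A Gronwall lemma that ``tolerates large steps'' gives no bound at all at a step where $\Delta t\gamma_n\ge 1$.

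The repair is to close the $L^2$ level first using the pairings above, then run your Step~2. Your truncation-error and pressure steps then proceed as in the paper.
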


\begin{proof}
Taking the inner product of (\ref{4-13a}) with $2\Delta t e_{\bm{u}}^{n+1}$, and multiplying (\ref{4-13b}) with $2\Delta t e_q^{n+1}$, we obtain
\begin{equation}\label{4-17}
\begin{split}
&\left(\Vert e_{\bm{u}}^{n+1}\Vert_{L^2}^2-\Vert e_{\bm{u}}^{n}\Vert_{L^2}^2+\Vert e_{\bm{u}}^{n+1}-e_{\bm{u}}^{n}\Vert_{L^2}^2\right)+\left(|e_{q}^{n+1}|^2- |e_{q}^{n}|^2+| e_{q}^{n+1}-e_{q}^{n}|^2\right)\\
&+2\nu\Delta t\Vert \nabla e_{\bm{u}}^{n+1}\Vert_{L^2}^2+2\Delta t\epsilon\Vert  e_{p}^{n+1}\Vert_{L^2}^2\\
&=-2\Delta t\sum\limits_{i=1}^{6}\left(I_{1i}, e_{\bm{u}}^{n+1}\right)+2\Delta te_q^{n+1}\sum\limits_{i=1}^{6}I_{2i}-2\Delta t\left(R_{\bm{u}}^{n+1}, e_{\bm{u}}^{n+1}\right)-2\Delta t e_q^{n+1}R_q.
\end{split}
\end{equation}

Note that $-(I_{11},e_{\bm{u}}^{n+1})+e_q^{n+1}I_{21}=-(I_{14},e_{\bm{u}}^{n+1})+e_q^{n+1}I_{24}=0$. By using the Cauchy-Schwarz inequality and (\ref{the2-2}), we estimate
\begin{equation}\label{4-18}
\begin{split}
&(I_{12}+I_{15}, e_{\bm{u}}^{n+1})=-q(t^{n+1})\int_\Omega (\bm{u}^n\cdot \nabla e_{\bm{u}}^{n+1}) \cdot e_{\bm{u}}^{n}  \ d\bm{x}\\
&\leq C\Vert \bm{u}^n\Vert_{H^2}\Vert e_{\bm{u}}^n\Vert_{L^2}\Vert \nabla e_{\bm{u}}^{n+1}\Vert_{L^2}\\
&\leq C_\delta\Vert \nabla e_{\bm{u}}^{n+1}\Vert_{L^2}^2+C\Vert \bm{u}^n\Vert_{H^2}^2\Vert e_{\bm{u}}^n\Vert_{L^2}^2.
\end{split}
\end{equation}

The other nonlinear terms on the right-hand side of (\ref{4-17}) can be bounded by
\begin{equation}\label{4-19}
\begin{split}
&(I_{13}+I_{16}, e_{\bm{u}}^{n+1})\leq C_\delta \Vert \nabla e_{\bm{u}}^{n+1}\Vert_{L^2}^2+C\Vert  e_{\bm{u}}^{n}\Vert_{L^2}^2,\\
&e_q^{n+1}I_{22}+e_q^{n+1}I_{24}\leq C_\delta |e_q^{n+1}|^2+C\Vert  \bm{u}^n\Vert_{L^2}^2\Vert  e_{\bm{u}}^{n}\Vert_{L^2}^2,\\
&e_q^{n+1}I_{23}+e_q^{n+1}I_{26}\leq C_\delta |e_q^{n+1}|^2+C\Vert  \bm{u}_\epsilon\Vert_{H^2}^2\Vert  e_{\bm{u}}^{n}\Vert_{L^2}^2,\\
&(R_{\bm{u}}^{n+1}, e_{\bm{u}}^{n+1})\leq C_\delta \Vert e_{\bm{u}}^{n+1}\Vert_{H^1}^2 +C\Delta t\left(\int_{t^n}^{t^{n+1}} \left(\left\Vert\frac{\partial^2 \bm{u}_\epsilon}{\partial s^2}\right\Vert_{L^2}^2+\left\Vert\frac{\partial (\nabla\bm{u}_\epsilon)}{\partial s}\right\Vert_{L^2}^2\right)ds\right),\\
&e_q^{n+1}R_q\leq C_\delta|e_q^{n+1}|^2+C\Delta t\left(\int_{t^n}^{t^{n+1}} \left(\left|\frac{d^2 q}{d s^2}\right|^2+\left\Vert\frac{\partial (\nabla\bm{u}_\epsilon)}{\partial s}\right\Vert_{L^2}^2\right)ds\right).
\end{split}
\end{equation}

Combining (\ref{4-17})-(\ref{4-19}), using regularity and numerical stability results, we can obtain
\begin{equation}\label{4-20}
\begin{split}
&\Vert e_{\bm{u}}^{n+1}\Vert_{L^2}^2+|e_q^{n+1}|^2+\sum\limits_{m=0}^{n}\Vert e_{\bm{u}}^{m+1}-e_{\bm{u}}^{m}\Vert_{L^2}^2+\nu\Delta t\sum\limits_{m=0}^{n}\Vert \nabla e_{\bm{u}}^{m+1}\Vert_{L^2}^2+\epsilon\Delta t\sum\limits_{m=0}^{n}\Vert  e_{p}^{m+1}\Vert_{L^2}^2\\
& \leq C\Delta t\sum\limits_{m=0}^{n}(\Vert  \bm{u}_\epsilon\Vert_{H^2}^2+\Vert \bm{u}^n\Vert_{H^2}^2)\Vert e_{\bm{u}}^{m}\Vert_{L^2}^2+C\Delta t\sum\limits_{m=0}^{n}|e_q^{m}|^2\\
&+ C\Delta t\sum\limits_{m=0}^{n}\left(\int_{t^m}^{t^{m+1}} \left(\left\Vert\frac{\partial^2 \bm{u}_\epsilon}{\partial s^2}\right\Vert_{L^2}^2+\left|\frac{d^2 q}{d s^2}\right|^2+\left\Vert\frac{\partial (\nabla\bm{u}_\epsilon)}{\partial s}\right\Vert_{L^2}^2\right)ds\right)\\
&\leq C\Delta t\sum\limits_{m=0}^{n}(\Vert  \bm{u}_\epsilon\Vert_{H^2}^2+\Vert \bm{u}^n\Vert_{H^2}^2)\Vert e_{\bm{u}}^{m}\Vert_{L^2}^2+C\Delta t\sum\limits_{m=0}^{n}|e_q^{m}|^2\\
&+C\Delta t\left(\Vert \bm{u}_\epsilon\Vert_{W^{1,2}(0,T;L^2)}^2+\Vert \bm{u}_\epsilon\Vert_{W^{2,2}(0,T;L^2)}^2+|q|_{W^{2,2}(0,T)}^2\right).
\end{split}
\end{equation}

Applying the discrete Gronwall inequality \cite{heywood1990finite} and Theorem \ref{theorem3-4}, it is easy to obtain
\begin{equation}\label{4-21}
\begin{split}
\Vert e_{\bm{u}}^{n+1}\Vert_{L^2}^2+\Delta t\sum\limits_{m=0}^{n}\Vert \nabla e_{\bm{u}}^{m+1}\Vert_{L^2}^2+\Delta t\epsilon\sum\limits_{m=0}^{n}\Vert  e_{p}^{m+1}\Vert_{L^2}^2+|e_q^{n+1}|^2\leq C_{T1}\left(\Delta t\right)^2,
\end{split}
\end{equation}
where $C_{T1}$ is a positive constant independent of $\epsilon$ and $\Delta t$.

We next estimate for $e_{\bm{u}}^{n+1}$ in $H^2$-norm. Taking the inner product of (\ref{4-13a}) with $\mathcal{A}e_{\bm{u}}^{n+1}=-\nu\Delta e_{\bm{u}}^{n+1}-\frac{1}{\epsilon}\nabla\nabla\cdot e_{\bm{u}}^{n+1}$, we obtain
\begin{equation}\label{4-22}
\begin{split}
&\Vert \nabla e_{\bm{u}}^{n+1}\Vert_{L^2}^2-\Vert \nabla e_{\bm{u}}^{n}\Vert_{L^2}^2+\frac{1}{\epsilon}(\Vert \nabla \cdot e_{\bm{u}}^{n+1}\Vert_{L^2}^2-\Vert \nabla \cdot e_{\bm{u}}^{n}\Vert_{L^2}^2)+\Delta t\Vert \mathcal{A}e_{\bm{u}}^{n+1}\Vert_{L^2}^2\\
&\leq C\Delta t\sum\limits_{i=1}^{6}\left|\left(I_{1i}, \mathcal{A}e_{\bm{u}}^{n+1}\right)\right|+C\Delta t\left|\left(R_{\bm{u}}^{n+1},\mathcal{A}e_{\bm{u}}^{n+1}\right)\right|.
\end{split}
\end{equation}
For every term on the right-hand side, the following bound holds
\begin{equation}\label{4-23}
\begin{split}
&\left|\Delta t\left(I_{11}+I_{14}, \mathcal{A}e_{\bm{u}}^{n+1}\right)\right|\leq C_\delta \Delta t\Vert \mathcal{A}e_{\bm{u}}^{n+1}\Vert_{L^2}^2+C(\Delta t)^3\Vert \bm{u}^n\Vert_{H^2}^2,\\
&\left|\Delta t\left(I_{12}+I_{15}, \mathcal{A}e_{\bm{u}}^{n+1}\right)\right|\leq C_\delta \Delta t\Vert \mathcal{A}e_{\bm{u}}^{n+1}\Vert_{L^2}^2+C\Delta t\Vert  \bm{u}^n\Vert_{H^2}^2\Vert \nabla e_{\bm{u}}^n\Vert_{L^2}^2,\\
&\left|\Delta t\left(I_{13}+I_{16}, \mathcal{A}e_{\bm{u}}^{n+1}\right)\right|\leq C_\delta \Delta t\Vert \mathcal{A}e_{\bm{u}}^{n+1}\Vert_{L^2}^2+C\Delta t\Vert  \bm{u}_\epsilon\Vert_{H^2}^2\Vert \nabla e_{\bm{u}}^n\Vert_{L^2}^2,\\
&\Delta t\left|\left(R_{\bm{u}}^{n+1},\mathcal{A}e_{\bm{u}}^{n+1}\right)\right|\leq C_\delta \Delta t\Vert \mathcal{A}e_{\bm{u}}^{n+1}\Vert_{L^2}^2+C(\Delta t)^2\left(\int_{t^n}^{t^{n+1}} \left(\left\Vert\frac{\partial^2 \bm{u}_\epsilon}{\partial s^2}\right\Vert_{L^2}^2+\left\Vert\frac{\partial (\nabla\bm{u}_\epsilon)}{\partial s}\right\Vert_{L^2}^2\right)ds\right).
\end{split}
\end{equation}

Taking the sum of $m$ from $0$ to $n$ on (\ref{4-22}), and using the Gronwall inequality to obtain
\begin{equation}\label{4-24}
\begin{split}
\Vert \nabla e_{\bm{u}}^{n+1}\Vert_{L^2}^2+\frac{1}{\epsilon}\Vert \nabla \cdot e_{\bm{u}}^{n+1}\Vert_{L^2}^2+\Delta t\sum\limits_{m=0}^{n}\Vert \Delta e_{\bm{u}}^{m+1}\Vert_{L^2}^2\leq C_{T2}(\Delta t)^2.
\end{split}
\end{equation}

Finally, we establish the error estimate for the pressure. Taking the inner product of (\ref{4-13a}) with $(t^{n+1})^2(e_{\bm{u}}^{n+1}-e_{\bm{u}}^{n})$, we have
\begin{equation}\label{4-25}
\begin{split}
&\frac{(t^{n+1})^2\Vert e_{\bm{u}}^{n+1}-e_{\bm{u}}^{n}\Vert_{L^2}^2}{\Delta t}+(t^{n+1})^2\left(\Vert \nabla e_{\bm{u}}^{n+1}\Vert_{L^2}^2-\Vert\nabla e_{\bm{u}}^{n}\Vert_{L^2}^2\right)+\frac{(t^{n+1})^2}{\epsilon}\left(\Vert \nabla\cdot  e_{\bm{u}}^{n+1}\Vert_{L^2}^2-\Vert\nabla\cdot e_{\bm{u}}^{n}\Vert_{L^2}^2\right)\\
&\leq \frac{(t^{n+1})^2\Vert e_{\bm{u}}^{n+1}-e_{\bm{u}}^{n}\Vert_{L^2}^2}{2\Delta t}+C(t^{n+1})^2\Delta t(\Vert \bm{u}^n\Vert_{H^2}^2+\Vert \bm{u}_\epsilon\Vert_{H^2}^2)\left(|e_q^{n+1}|^2+\Vert e_{\bm{u}}^{n}\Vert_{H^1}^2\right)\\
&+C(t^{n+1})^2(\Delta t)^2\left(\int_{t^n}^{t^{n+1}} \left(\left\Vert\frac{\partial^2 \bm{u}_\epsilon}{\partial s^2}\right\Vert_{L^2}^2+\left\Vert\frac{\partial (\nabla\bm{u}_\epsilon)}{\partial s}\right\Vert_{L^2}^2\right)ds\right)\\
&\leq \frac{(t^{n+1})^2\Vert e_{\bm{u}}^{n+1}-e_{\bm{u}}^{n}\Vert_{L^2}^2}{2\Delta t}+C\left(\Delta t\right)^3(\Vert \bm{u}^n\Vert_{H^2}^2+\Vert \bm{u}_\epsilon\Vert_{H^2}^2)\\
&+C(\Delta t)^2\left(\int_{t^n}^{t^{n+1}} \left(\left\Vert\frac{\partial^2 \bm{u}_\epsilon}{\partial s^2}\right\Vert_{L^2}^2+\left\Vert\frac{\partial (\nabla\bm{u}_\epsilon)}{\partial s}\right\Vert_{L^2}^2\right)ds\right).
\end{split}
\end{equation}
By summing the above inequality from $m=0$ to $n$ and applying the Gronwall inequality, we can obtain 
\begin{equation}
\begin{split}
\Delta t\sum\limits_{m=0}^{n} (t^{m+1})^2\left\Vert \frac{e_{\bm{u}}^{m+1}-e_{\bm{u}}^{m}}{\Delta t}\right\Vert_{L^2}^2+(t^{n+1})^2\Vert \nabla e_{\bm{u}}^{n+1}\Vert_{L^2}^2\leq C(\Delta t)^2.
\end{split}
\end{equation}
It follows from (\ref{4-13a}) that
\begin{equation}
\begin{split}
\Delta t\sum\limits_{m=0}^{n} (t^{m+1})^2\Vert e_p^{m+1}\Vert_{L^2}^2\leq \Delta t\sum\limits_{m=0}^{n} (t^{m+1})^2\Vert \nabla e_p^{m+1}\Vert_{H^{-1}}^2\leq C(\Delta t)^2,
\end{split}
\end{equation}
which completes the proof.
\end{proof}

In the three-dimensional case, the local-in-time error estimate for the first-order P-SAV scheme can be directly obtained with the help of Theorem \ref{theorem3-4}.
\begin{theorem}
\label{theorem4-3}
Let $d=3$. For the first-order P-SAV scheme (\ref{the3-1}), there exists a positive constant $T^*$ such that for $0<T<T^*$ and $n\leq \frac{T}{\Delta t}-1$, we have
\begin{equation}\label{4-26}
\begin{split}
\Vert e_{\bm{u}}^{n}\Vert_{H^1}^2+ \Delta t\sum\limits_{m=0}^{n}\Vert e_{\bm{u}}^{m}\Vert_{H^2}^2+ \Delta t\sum\limits_{m=0}^{n} (t^{m+1})^2\Vert e_p^{m+1}\Vert_{L^2}^2+|e_q^{n}|^2\leq C_{T43}(\Delta t)^2,
\end{split}
\end{equation}
where $T_{43}$ is a constant independent of $\epsilon$ and $\Delta t$.
\end{theorem}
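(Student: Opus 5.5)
The plan is to repeat the three-stage energy argument of Theorem \ref{theorem4-2} on the error system (\ref{4-13}). The only places where $d=2$ entered that proof are the stability bounds and the Sobolev inequalities used on the nonlinear terms. Everything else (the cancellation structure, the truncation errors) does not depend on the dimension. So I first fix $T^*$ to be the constant given by Theorem \ref{theorem3-4} for $d=3$, shrunk if necessary so that the regularity of Lemma \ref{lemma4-1} and (\ref{the2-7}) holds on $[0,T^*)$. For $0<T<T^*$, Theorem \ref{theorem3-4} then gives
\[
\Vert \nabla \bm{u}^{n}\Vert_{L^2}^2+\Delta t\sum_{m}\Vert \Delta \bm{u}^{m}\Vert_{L^2}^2\leq C_2 ,
\]
uniformly in $\Delta t$ and $\epsilon$. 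Together with Theorem \ref{theorem3-3}, this gives $|q^n|\le C$. These are exactly the discrete bounds on $\bm{u}^n$ that were used in the 2-D proof.

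Step one is the $L^2$ estimate. I take the inner product of (\ref{4-13a}) with $2\Delta t e_{\bm{u}}^{n+1}$ and multiply (\ref{4-13b}) by $2\Delta t e_q^{n+1}$. The SAV cancellation $-(I_{11}+I_{14},e_{\bm{u}}^{n+1})+e_q^{n+1}(I_{21}+I_{24})=0$ still holds, so the terms that would need a bound on $e_q^{n+1}\bm{u}^n\cdot\nabla\bm{u}^n$ in a strong norm disappear. The remaining terms $I_{12},I_{13},I_{15},I_{16},I_{22},I_{23},I_{25},I_{26}$ are estimated using only the dimension-independent forms of (\ref{the2-2}), which are valid for $d\le4$. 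In particular,
\[
|B(\bm{u}^n,e_{\bm{u}}^{n},e_{\bm{u}}^{n+1})|\le C\Vert \bm{u}^n\Vert_{H^2}\Vert e_{\bm{u}}^n\Vert_{L^2}\Vert e_{\bm{u}}^{n+1}\Vert_{H^1}.
\]
Each term is then of the form $C_\delta\Vert\nabla e_{\bm{u}}^{n+1}\Vert^2+C(\Vert\bm{u}^n\Vert_{H^2}^2+\Vert\bm{u}_\epsilon\Vert_{H^2}^2)(\Vert e_{\bm{u}}^n\Vert^2+|e_q^{n+1}|^2)$. Summing over $n$ gives a Gronwall weight of the form $\Delta t\sum_m\Vert\bm{u}^m\Vert_{H^2}^2$. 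This weight is finite by the local bound from Theorem \ref{theorem3-4}, and that is precisely where the restriction $T<T^*$ enters. The discrete Gronwall inequality then yields (\ref{4-21}).

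Step two is the $H^1/H^2$ estimate, obtained by testing with $\mathcal{A}e_{\bm{u}}^{n+1}$ as in (\ref{4-22}). Here the 3-D versions of the trilinear bounds are needed. For $I_{11}+I_{14}$ I would use the pairing $\Vert\bm{u}^n\Vert_{H^1}\Vert\bm{u}^n\Vert_{H^1}^{1/2}\Vert\bm{u}^n\Vert_{H^2}^{1/2}|e_q^{n+1}|$ from (\ref{the2-3}). Combined with $|e_q^{n+1}|^2\le C(\Delta t)^2$ from step one, this gives a term $C(\Delta t)^2\Vert\bm{u}^n\Vert_{H^2}$, which is summable. For $I_{12}+I_{15}$ and $I_{13}+I_{16}$ I would use the second and third lines of (\ref{the2-2}), e.g.
\[
\Vert \bm{u}^n\cdot\nabla e_{\bm{u}}^n\Vert_{L^2}\le C\Vert\bm{u}^n\Vert_{H^2}\Vert\nabla e_{\bm{u}}^n\Vert_{L^2}.
\]
After Gronwall with the same weight, this gives (\ref{4-24}).

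Step three, the weighted pressure estimate (testing with $(t^{n+1})^2(e_{\bm{u}}^{n+1}-e_{\bm{u}}^n)$ and recovering $e_p$ via the $H^{-1}$ norm of $\nabla e_p$), involves no new nonlinear structure and is copied verbatim.

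The main obstacle is making sure that no step of the 2-D proof silently used the Ladyzhenskaya-type bound in (\ref{the2-3}) for $d=2$. The 2-D proof's estimates of the $L^2$ and $H^2$ error terms must be checked and, where needed, rewritten with the 3-D interpolation inequality. The resulting powers of $\Vert\bm{u}^n\Vert_{H^2}$ must stay at most quadratic, so that the Gronwall weights are controlled by $\Delta t\sum\Vert\Delta\bm{u}^m\Vert^2\le C_2$. If some term produces a higher power, I would absorb the extra factor using the uniform $L^\infty(H^1)$ bound on $\bm{u}^n$ available up to $T^*$.
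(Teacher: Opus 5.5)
Your proposal is correct and is essentially the paper's argument: the paper gives no separate proof of Theorem \ref{theorem4-3}, only noting that it follows by rerunning the proof of Theorem \ref{theorem4-2} with the local-in-time bounds of Theorem \ref{theorem3-4}, which is what you do, with more care than the paper about which trilinear estimates depend on the dimension and about shrinking $T^*$ so the continuous solution stays regular. One bookkeeping point to tidy up in your $L^2$ step: put the weights $\Vert\bm{u}^n\Vert_{H^2}^2$ and $\Vert\bm{u}_\epsilon\Vert_{H^2}^2$ only on the explicit quantity $\Vert e_{\bm{u}}^n\Vert_{L^2}^2$ and keep a constant coefficient on the implicit $|e_q^{n+1}|^2$ (as in (\ref{4-19})), since otherwise the discrete Gronwall lemma would need $\Delta t\Vert\bm{u}^n\Vert_{H^2}^2$ to be small, which is a hidden step-size restriction.
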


\begin{remark}
\label{remark4-4}
After completing the above convergence analysis, we are able to provide an intuitive explanation for the introduction of the nonlinear term $(\nabla\cdot \bm{u})\bm{u}$ in the momentum equation (\ref{the2-1a}). Note that the term $\left(I_{12},e_{\bm{u}}^{n+1}\right)=q(t^{n+1})\left(\bm{u}^n\cdot\nabla e_{\bm{u}}^{n},e_{\bm{u}}^{n+1}\right)$ in (\ref{4-18}) cannot be estimated without a time step restriction depending on the viscosity coefficient $\nu$. Once $(\nabla\cdot \bm{u})\bm{u}$ is incorporated, it suffices to estimate $(I_{12}+I_{15}, e_{\bm{u}}^{n+1})=q(t^{n+1})\left(\bm{u}^n\cdot\nabla e_{\bm{u}}^{n+1},e_{\bm{u}}^{n}\right)$ (see (\ref{4-18})), thereby removing the time step constraint.
\end{remark}

\begin{remark}
\label{remark4-5}
By combining the continuous error analysis in Theorem \ref{theorem2-4}, we can conclude the following error estimates between the first-order time-discrete scheme (\ref{the3-1}) and the Navier-Stokes equations (\ref{the1-1}):
\begin{equation}\label{4-27}
\begin{split}
&t^n\Vert \bm{u}(t^n)-\bm{u}^n\Vert_{L^2}^2+ (t^n)^2\Vert \bm{u}(t^n)-\bm{u}^n\Vert_{H^1}^2+\Delta t\sum\limits_{m=0}^{n} (t^{m})^2\Vert p(t^m)-p^m\Vert_{L^2}+|q(t^n)-q^n|\\
&=O\left((\Delta t)^2+\epsilon^2\right).
\end{split}
\end{equation}
For the second-order scheme, we have the corresponding estimate:
\begin{equation}\label{4-28}
\begin{split}
&t^n\Vert \bm{u}(t^n)-\bm{u}^n\Vert_{L^2}^2+ (t^n)^2\Vert \bm{u}(t^n)-\bm{u}^n\Vert_{H^1}^2+\Delta t\sum\limits_{m=0}^{n} (t^{m})^2\Vert p(t^m)-p^m\Vert_{L^2}+|q(t^n)-q^n|\\
&=O\left((\Delta t)^4+\epsilon^2\right).
\end{split}
\end{equation}
\end{remark}

\section{SAV with a sequential regularization method (SR-SAV method)}
\label{section5}
It can be observed from (\ref{4-27}) and (\ref{4-28}) that the P‑SAV scheme exhibits some inaccuracy near $t=0$, and in numerical computations a smaller $\epsilon$ should be chosen to achieve the desired accuracy. However, this weakens the stability of the system. This section considers using the sequential regularization method \cite{lin1997sequential} to increase the approximation order of $\epsilon$ as an improvement to the P-SAV method, where the parameter $\epsilon$ is not necessarily very small, resulting in a more stable or less stiff reformulated system. The incompressibility constraint can be approximated more accurately and its error will not increase with time.

The SR-SAV method is defined as follows: with $p_0(t,\bm{x})$ an initial guess of pressure, given two nonnegative constants $\alpha$ and $\beta$, for $s=1,2,...,$ solve the problem
\begin{subequations}\label{the5-1}
\begin{align}
&\frac{\partial \bm{u}_{s}}{\partial t}+q_s(t)\left((\bm{u}_{s} \cdot \nabla) \bm{u}_{s}+(\nabla\cdot \bm{u}_s)\bm{u}_s\right)-\nu \Delta \bm{u}_{s}+\nabla p_s=\bm{f}, \label{the5-1a}\\
&\frac{d}{dt}q_s(t)=\int_\Omega \left(\bm{u}_{s}\cdot \nabla \bm{u}_{s}\cdot \bm{u}_{s}+(\nabla\cdot \bm{u}_s)\bm{u}_s^2\right) \ d\bm{x}, \label{the5-1b} \\
&\alpha \frac{\partial \left(\nabla\cdot(\bm{u}_s)\right)}{\partial t}+\beta \nabla\cdot(\bm{u}_s)=-\epsilon(p_s-p_{s-1}). \label{the5-1c}
\end{align}
\end{subequations}
Here we take $\alpha=1$ and $p_0=0$ without loss of generality. If we take $\alpha=0$ and $\beta=1$, this is exactly the P-SAV method. We note that the term $\frac{\partial}{\partial t}\left(\nabla\cdot(\bm{u}_s)\right)$ is introduced through the so-called Baumgarte stabilization idea (cf. \cite{lin1997sequential}) so that the error in the divergence-free condition would not increase or not be dramatically accumulated. We can eliminate $p_s$ from system (\ref{the5-1}), solve an equation only with the unknown $\bm{u}_s$, and then recover $p_s$ from (\ref{the5-1c}).  

For the solution $(\bm{u}_s, p_s)$ of (\ref{the5-1}), we have the following regularity estimate \cite{lu2008analysis}:
\begin{equation}\label{the5-2}
\begin{split}
\Vert \bm{u}_s\Vert_{H^1}^2+\int_0^{T} \left(\Vert\bm{u}_s\Vert_{H^2}^2 +\Vert p_s\Vert_{H^1}^2\right)  \ ds\leq C_{\bm{u}_0, \bm{f}}.
\end{split}
\end{equation}

\subsection{Error estimate}
We follow the idea in \cite{lu2008analysis} to estimate the error between the solution of (\ref{the5-1}) and that of (\ref{the1-1}).
\begin{theorem}
\label{theorem5-1}
Let $\bm{u}$ and $p$ be the solution of the Navier-Stokes equations (\ref{the1-1}), and let $\bm{u}_s$ and $p_s$ be the solution of (\ref{the5-1}). For $s=1,2,...$, we have the following error estimate:
\begin{equation}\label{the5-3}
\begin{split}
\Vert\bm{u}(t)-\bm{u}_s(t)\Vert_{H^1}^2+\int_{0}^t \Vert \bm{u}(r)-\bm{u}_s(r)\Vert_{H^2}^2 \ dr+\int_{0}^t \Vert p(r)-p_s(r)\Vert_{H^1}^2 \ dr\leq C_{T51}\epsilon^{2s},
\end{split}
\end{equation}
where $C_{T51}$ is a positive constant independent of $\epsilon$.
\end{theorem}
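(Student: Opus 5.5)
The plan is to follow the inductive strategy of \cite{lu2008analysis}. The target is a one-step contraction estimate in $s$: the pressure error at level $s$ should be controlled by $\epsilon$ times the pressure error at level $s-1$. Write $\bm{e}_s=\bm{u}_s-\bm{u}$, $\eta_s=p_s-p$, and $e_{q,s}=q_s-r$, where $r\equiv 1$ is the auxiliary variable of (\ref{the2-9}). Since $\nabla\cdot\bm{u}=0$, subtracting (\ref{the1-1}) from (\ref{the5-1}) gives
\begin{equation}\nonumber
\begin{split}
&\partial_t \bm{e}_s-\nu\Delta\bm{e}_s+\nabla\eta_s=-N_s,\qquad \frac{d e_{q,s}}{dt}=\int_\Omega M_s\,d\bm{x},\\
&\partial_t(\nabla\cdot\bm{e}_s)+\beta\nabla\cdot\bm{e}_s=-\epsilon(\eta_s-\eta_{s-1}),
\end{split}
\end{equation}
with $\bm{e}_s(0)=0$ and $e_{q,s}(0)=0$. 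Here $N_s,M_s$ are nonlinear differences of the same structure as (\ref{the2-13})--(\ref{the2-14}), with $\bm{\xi}$ absent. The base case uses $\eta_0=-p$, which lies in $L^2(0,T;H^1)$ by (\ref{the2-5}).

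The core step is an $H^1$/$H^2$ energy estimate, obtained by testing the momentum error equation with $-\Delta\bm{e}_s$ (or with $\mathcal{A}\bm{e}_s$, adapted to the Baumgarte form). The pressure term is handled through the constraint: $(\nabla\eta_s,-\Delta\bm{e}_s)$ is rewritten via $\nabla\cdot\bm{e}_s$, and $\eta_s$ is replaced by $\eta_{s-1}-\epsilon^{-1}(\partial_t+\beta)\nabla\cdot\bm{e}_s$. This produces the good terms
\begin{equation}\nonumber
\frac{1}{2\epsilon}\frac{d}{dt}\Vert\nabla\nabla\cdot\bm{e}_s\Vert^2_{L^2}+\frac{\beta}{\epsilon}\Vert\nabla\nabla\cdot\bm{e}_s\Vert_{L^2}^2,
\end{equation}
while the remaining cross term $(\nabla\eta_{s-1},\cdot)$ is bounded by $C_\delta\Vert\Delta\bm{e}_s\Vert^2_{L^2}+C\Vert\nabla\eta_{s-1}\Vert^2_{L^2}$. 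Alternatively, following \cite{lu2008analysis}, I would project onto divergence-free fields and bound $\nabla\cdot\bm{e}_s$ directly from the ODE in time, $\nabla\cdot\bm{e}_s(t)=-\epsilon\int_0^t e^{-\beta(t-\tau)}(\eta_s-\eta_{s-1})\,d\tau$, which shows the divergence error is $O(\epsilon)$ times the pressure errors.

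The nonlinear terms are handled exactly as in Theorem \ref{theorem2-4}. Using (\ref{the2-2})--(\ref{the2-3}) together with the regularity (\ref{the5-2}) and (\ref{the2-5}), I bound $(N_s,-\Delta\bm{e}_s)$ and $e_{q,s}\int_\Omega M_s\,d\bm{x}$ by
\begin{equation}\nonumber
C_\delta\Vert\Delta\bm{e}_s\Vert^2_{L^2}+C\bigl(\Vert\bm{u}\Vert_{H^2}^2+\Vert\bm{u}_s\Vert^2_{H^2}\bigr)\bigl(\Vert\nabla\bm{e}_s\Vert^2_{L^2}+|e_{q,s}|^2\bigr),
\end{equation}
and the coefficient is integrable in time. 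Gronwall then yields
\begin{equation}\nonumber
\Vert\bm{e}_s\Vert^2_{H^1}+|e_{q,s}|^2+\int_0^t\Vert\bm{e}_s\Vert_{H^2}^2\leq C\,\epsilon^2\int_0^t\Vert\eta_{s-1}\Vert_{H^1}^2 .
\end{equation}
The $\epsilon^2$ comes from the constraint, which makes $\nabla\cdot\bm{e}_s$ of size $\epsilon$ times the pressure mismatch.

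Next I recover the pressure from the momentum equation: $\Vert\nabla\eta_s\Vert_{L^2}$ is bounded by $\Vert\partial_t\bm{e}_s\Vert+\Vert\Delta\bm{e}_s\Vert+\Vert N_s\Vert$. This requires an auxiliary estimate on $\int_0^t\Vert\partial_t\bm{e}_s\Vert^2$, obtained by testing with $\partial_t\bm{e}_s$. The result is
\begin{equation}\nonumber
\int_0^t\Vert\eta_s\Vert_{H^1}^2\leq C\epsilon^2\int_0^t\Vert\eta_{s-1}\Vert^2_{H^1}.
\end{equation}
Iterating from $\int_0^T\Vert\eta_0\Vert_{H^1}^2=\int_0^T\Vert p\Vert^2_{H^1}\le C$ gives $\int_0^t\Vert\eta_s\Vert_{H^1}^2\le C^s\epsilon^{2s}$. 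Inserting this at level $s$ gives the velocity part of (\ref{the5-3}), and since $s$ is fixed, $C^s$ is absorbed into $C_{T51}$.

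The main obstacle is the pressure-recovery step. It must gain a full factor $\epsilon$ per iteration, with a constant uniform in $\epsilon$ and $s$. Bounding $\nabla\eta_s$ from the momentum equation alone only gives $O(1)$ control, so the gain has to come from the Baumgarte constraint. I would first try the approach of \cite{lu2008analysis}: split $\bm{e}_s$ into its divergence-free part and a gradient part, invert the constraint to write $\eta_s=\eta_{s-1}-\epsilon^{-1}(\partial_t+\beta)\nabla\cdot\bm{e}_s$, and use the Stokes-type inf-sup estimate on the divergence-free component. The explicit treatment of $q_s$ adds only lower-order terms that Gronwall absorbs.
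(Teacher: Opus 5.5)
Your overall architecture matches the paper's: error equations with $\bm{e}_s(0)=0$, base case $\eta_0=-p$, a one-step contraction in $s$, then iteration. But the estimate that actually produces the factor $\epsilon^2$ is never established, and the version you write down fails.

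\textbf{The $-\Delta\bm{e}_s$ test.} Rewriting $(\nabla\eta_s,-\Delta\bm{e}_s)$ through $\nabla\cdot\bm{e}_s$ to obtain $\frac{1}{2\epsilon}\frac{d}{dt}\Vert\nabla\nabla\cdot\bm{e}_s\Vert_{L^2}^2$ requires $(\nabla\phi,\Delta\bm{e}_s)=(\nabla\phi,\nabla\nabla\cdot\bm{e}_s)$. That is, $\nabla\times\nabla\times\bm{e}_s$ must be $L^2$-orthogonal to gradients. With only $\bm{e}_s|_{\partial\Omega}=0$, this leaves the boundary term $\int_{\partial\Omega}\phi\,(\nabla\times\nabla\times\bm{e}_s)\cdot\bm{n}\,dS$, which is nonzero in general because the wall vorticity varies along the boundary. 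The cross term $\epsilon^{-1}\nu(\nabla\nabla\cdot\partial_t\bm{e}_s,\Delta\bm{e}_s)$ hits the same obstruction if you test with $\mathcal{A}\bm{e}_s$ instead.

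\textbf{No power of $\epsilon$ from the cross term.} Even where that identity holds (periodic case), eliminating $\eta_s$ leaves $\nabla\eta_{s-1}$ as an $O(1)$ forcing. Bounding it by $C\Vert\nabla\eta_{s-1}\Vert_{L^2}^2$, as you do, gives no power of $\epsilon$. Splitting it against $\frac{\beta}{\epsilon}\Vert\nabla\nabla\cdot\bm{e}_s\Vert_{L^2}^2$ gives only $\Vert\nabla\bm{e}_s\Vert_{L^2}^2\lesssim\epsilon\int_0^t\Vert\nabla\eta_{s-1}\Vert_{L^2}^2$, which iterates to $\epsilon^{s}$, not $\epsilon^{2s}$. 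So your bound $C\epsilon^2\int_0^t\Vert\eta_{s-1}\Vert_{H^1}^2$ is asserted, not derived.

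\textbf{The pressure step.} Testing with $\partial_t\bm{e}_s$ produces $-(\eta_s,\partial_t\nabla\cdot\bm{e}_s)$, which contains $\eta_s$ again. A separate ``recovery'' of $\eta_s$ afterwards is therefore circular, and you end by deferring to \cite{lu2008analysis} without carrying it out.

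\textbf{The missing idea.} The paper keeps $h_s=\eta_s$ and views $(\bm{e}_s,h_s)$ as a Stokes problem with nonzero divergence: $-\nu\Delta\bm{e}_s+\nabla h_s=\mathcal{B}$, $\nabla\cdot\bm{e}_s=g_s$. It then tests the momentum error equation with $\mathcal{B}=-\nu\Delta\bm{e}_s+\nabla h_s$ itself.
\begin{itemize}
\item The pressure contribution becomes $-(\partial_t g_s,h_s)\le\delta\Vert h_s\Vert_{L^2}^2+C\Vert\partial_t g_s\Vert_{L^2}^2$.
\item The Baumgarte ODE, with $g_s(0)=0$, gives $\int_0^t\left(\Vert g_s\Vert_{H^1}^2+\Vert\partial_t g_s\Vert_{L^2}^2\right)\le C\epsilon^2\int_0^t\Vert h_s-h_{s-1}\Vert_{H^1}^2$. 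So the Stokes data is genuinely $O(\epsilon)$.
\item The Stokes regularity bound $\Vert\Delta\bm{e}_s\Vert_{L^2}^2+\Vert\nabla h_s\Vert_{L^2}^2\le C\left(\Vert\mathcal{B}\Vert_{L^2}^2+\Vert g_s\Vert_{H^1}^2\right)$ then controls velocity in $H^2$ and pressure in $H^1$ simultaneously. The $\delta\Vert h_s\Vert^2$ terms are absorbed, and the result is a bound $M\epsilon^2\int_0^t\Vert h_s-h_{s-1}\Vert_{H^1}^2$.
\end{itemize}

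Because this right-hand side still contains $h_s$, you must also take $\epsilon$ small (e.g.\ $2M\epsilon^2<1$) and absorb $2M\epsilon^2\int_0^t\Vert h_s\Vert_{H^1}^2$ before iterating. Your contraction, with only $\eta_{s-1}$ on the right, skips this step. That is a symptom of dropping the $\eta_s$-dependence of $\nabla\cdot\bm{e}_s$, which your own formula $\nabla\cdot\bm{e}_s(t)=-\epsilon\int_0^t e^{-\beta(t-\tau)}(\eta_s-\eta_{s-1})\,d\tau$ displays.
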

\begin{proof}
Let $\bm{e}_s=\bm{u}_s-\bm{u}$, $h_s=p_s-p$ and $r_s=q_s-q$. Subtracting (\ref{the1-1}) (adding a corresponding auxiliary equation $q_t=\int_\Omega \left(\bm{u}\cdot \nabla \bm{u}\cdot \bm{u}+(\nabla\cdot \bm{u})\bm{u}^2\right) \ d\bm{x}$) from (\ref{the5-1}), we have
\begin{subequations}\label{the5-4}
\begin{align}
&\frac{\partial \bm{e}_{s}}{\partial t}+J_1-\nu \Delta \bm{e}_{s}+\nabla h_s=0, \label{the5-4a}\\
&\frac{d}{dt}r_s(t)=J_2, \label{the5-4b} \\
&\frac{\partial \left(\nabla\cdot\bm{e}_s\right)}{\partial t}+\beta \nabla\cdot(\bm{e}_s)=-\epsilon(h_s-h_{s-1}), \label{the5-4c}
\end{align}
\end{subequations}
where
\begin{equation}\label{the5-5}
\begin{split}
&J_1=r_s\bm{u}_s\cdot \nabla\bm{u}_s+\bm{u}_s\cdot \nabla\bm{e}_s+\bm{e}_s\cdot\nabla \bm{u}+r_s(\nabla\cdot\bm{u}_s) \bm{u}_s+(\nabla\cdot\bm{u}_s)\bm{e}_s+(\nabla\cdot \bm{e}_s)\bm{u},\\
&J_2= \int_\Omega  \left(\bm{u}_s\cdot \nabla \bm{u}_s\cdot\bm{e}_s+\bm{u}_s\cdot\nabla \bm{e}_s\cdot \bm{u}+\bm{e}_s\cdot\nabla\bm{u}\cdot \bm{u} \right.\\
&\left. +(\nabla\cdot\bm{u}_s)\bm{u}_s\cdot \bm{e}_s+(\nabla\cdot\bm{u}_s)\bm{e}_s\cdot\bm{u}+(\nabla\cdot \bm{e}_s)\bm{u}\cdot \bm{u}\right) \ d\bm{x}.
\end{split}
\end{equation}

By taking the inner products of (\ref{the5-4a}) and (\ref{the5-4c}) with $\bm{e}_{s}$ and $\nabla\cdot \bm{e}_{s}$, respectively, we have
\begin{equation}\label{the5-6}
\begin{split}
&\frac{1}{2}\frac{d}{dt}\Vert \bm{e}_s\Vert_{L^2}^2+\frac{1}{2}\frac{d}{dt}\Vert \nabla\cdot\bm{e}_s\Vert_{L^2}^2+\nu\Vert \nabla\bm{e}_s\Vert_{L^2}^2+\beta\Vert\nabla\cdot\bm{e}_s\Vert_{L^2}^2\\
&\leq C_\delta\Vert h_{s}\Vert_{L^2}^2+C\Vert\nabla\cdot\bm{e}_s\Vert_{L^2}^2+C\epsilon^2\Vert h_s-h_{s-1}\Vert_{L^2}^2-(J_1,\bm{e}_s).
\end{split}
\end{equation}

Multiplying (\ref{the5-4b}) by $r_s$ and combining (\ref{the5-6}), we obtain
\begin{equation}\label{the5-7}
\begin{split}
&\frac{d}{dt}\Vert \bm{e}_s\Vert_{L^2}^2+\frac{d}{dt}\Vert \nabla\cdot\bm{e}_s\Vert_{L^2}^2+\frac{d}{dt}\left|r_s\right|^2+\Vert \nabla\bm{e}_s\Vert_{L^2}^2+\Vert\nabla\cdot\bm{e}_s\Vert_{L^2}^2\\
&\leq C_\delta \Vert h_{s}\Vert_{L^2}^2+C\epsilon^2\Vert h_s-h_{s-1}\Vert_{L^2}^2+C\left(\Vert \bm{u}\Vert_{H^2}^2+\Vert \bm{u}_s\Vert_{H^2}^2\right)\Vert\bm{e}_s\Vert_{L^2}^2+C|r_s|^2.
\end{split}
\end{equation}
Note that $\bm{e}_s|_{t=0}=0$. Integrating over $[0,t]$, and applying the Gronwall inequality, we can obtain
\begin{equation}\label{the5-8}
\begin{split}
&\Vert \bm{e}_s\Vert_{L^2}^2+\Vert \nabla\cdot\bm{e}_s\Vert_{L^2}^2+\left|r_s\right|^2+\int_0^t \Vert\nabla\bm{e}_s(s)\Vert_{L^2}^2 \ ds\\
&\leq C_\delta\int_0^t\Vert h_s (s)\Vert_{L^2} \ ds +C\epsilon^2\int_0^t \Vert h_s(s)-h_{s-1}(s)\Vert_{L^2}^2 \ ds.
\end{split}
\end{equation}
It follows from (\ref{the5-4c}) that
\begin{equation}\label{the5-9}
\begin{split}
\int_0^t \left\Vert \frac{\partial(\nabla\cdot \bm{e}_s)}{\partial s}\right\Vert_{L^2}^2\leq C\epsilon^2\int_0^t \Vert h_s(s)-h_{s-1}(s)\Vert_{L^2}^2 \ ds.
\end{split}
\end{equation}

Taking the inner product of (\ref{the5-4a}) with $\mathcal{B}(\bm{e}_s, h_s)=-\nu \Delta \bm{e}_s+\nabla h_s$, we have
\begin{equation}\label{the5-10}
\begin{split}
\frac{d}{dt}\Vert \nabla\bm{e}_s\Vert_{L^2}^2+\Vert \mathcal{B}\Vert_{L^2}^2\leq C_\delta\Vert h_s\Vert_{L^2}^2+C\left\Vert \frac{\partial(\nabla\cdot \bm{e}_s)}{\partial s}\right\Vert_{L^2}^2+C|r_s|^2+C\left(\Vert \bm{u}\Vert_{H^2}^2+\Vert \bm{u}_s\Vert_{H^2}^2\right)\Vert\nabla \bm{e}_s\Vert_{L^2}^2.
\end{split}
\end{equation}
Note that $\Vert \Delta \bm{e}_s\Vert_{L^2}^2+\Vert \nabla h_s\Vert_{L^2}^2\leq C(\Vert\mathcal{B}\Vert_{L^2}^2+\epsilon^2\Vert h_s-h_{s-1}\Vert_{H^1}^2)$ (cf. \cite{lu2008analysis}). Combining (\ref{the5-7}) and (\ref{the5-10}), we can conclude that
\begin{equation}\label{the5-11}
\begin{split}
&\Vert \bm{e}_s\Vert_{H^1}^2+\int_{0}^t \left(\Vert  \bm{e}_s(s)\Vert_{H^2}^2+\Vert h_s(s)\Vert_{H^1}^2\right) \  ds+|r_s|^2\\
&\leq M\epsilon^2\int_0^t \Vert h_s(s)-h_{s-1}(s)\Vert_{H^1}^2 \ ds\\
&\leq 2M\epsilon^2\int_0^t \Vert h_{s}(s)\Vert_{H^1}^2  \ ds+2M\epsilon^2\int_0^t \Vert h_{s-1}(s)\Vert_{H^1}^2  \ ds.
\end{split}
\end{equation}
where $M$ is a constant independent of $\epsilon$.  

For $\epsilon$ sufficient small such that $2M\epsilon^2\leq 1$, wa have
\begin{equation}\label{the5-12}
\begin{split}
&\Vert \bm{e}_s\Vert_{H^1}^2+\int_{0}^t \left(\Vert  \bm{e}_s(s)\Vert_{H^2}^2+\Vert h_s(s)\Vert_{H^1}^2\right) \  ds+|r_s|^2\\
&\leq \left(2M\epsilon^2\right)^s\int_0^t \Vert h_0(s)\Vert_{H^1}^2 \ ds\\
&=(2M)^s\epsilon^{2s}\int_0^t \Vert p(s)\Vert_{H^1}^2 \ ds:=C_{T51}\epsilon^{2s}.
\end{split}
\end{equation}

This completes the proof.
\end{proof}

\subsection{Numerical schemes}
The following presents the first- and second-order SR-SAV schemes.

\textbf{The first-order scheme.}
The first-order time-discrete scheme is given as follows:
\begin{subequations}\label{the5-13}
	\begin{align}
&\frac{\bm{u}^{n+1}_s-\bm{u}^{n}_s}{\Delta t}+q^{n+1}_s\left((\bm{u}^{n}_s \cdot \nabla) \bm{u}^{n}_s+(\nabla\cdot \bm{u}^{n}_s)\bm{u}^{n}_s\right)-\nu \Delta \bm{u}^{n+1}_s+\nabla p^{n+1}_s=\bm{f}^{n+1}, \label{the5-13a}\\
&\frac{q^{n+1}_s-q^{n}_s}{\Delta t}=\int_\Omega \left(\bm{u}^{n}_s\cdot \nabla \bm{u}^{n}_s\cdot \bm{u}^{n+1}_s+(\nabla\cdot \bm{u}^{n}_s)\bm{u}^{n}_s\cdot \bm{u}^{n+1}_s \right) \ d\bm{x}, \label{the5-13b} \\
&\frac{\nabla\cdot \bm{u}^{n+1}_s-\nabla\cdot \bm{u}^{n}_s}{\Delta t}+\beta \nabla\cdot\bm{u}^{n+1}_s=-\epsilon(p^{n+1}_s-p^{n+1}_{s-1}). \label{the5-13c}
\end{align}
\end{subequations}
By substituting equation (\ref{the5-13c}) into equation (\ref{the5-13a}), we obtain the following elliptic equation with constant coefficient for the velocity $\bm{u}^{n+1}_s$ at the $s$-th step:
\begin{equation}\label{the5-14}
\begin{split}
&\frac{1}{\Delta t}\bm{u}^{n+1}_s-\nu\Delta \bm{u}^{n+1}_s-\frac{1}{\epsilon\Delta t}\nabla\nabla\cdot \bm{u}^{n+1}_s-\frac{\beta}{\epsilon}\nabla\nabla\cdot \bm{u}^{n+1}_s\\
&=\frac{\bm{u}^n_s}{\Delta t}-q^{n+1}_s\left((\bm{u}^{n}_s \cdot \nabla) \bm{u}^{n}_s+(\nabla\cdot \bm{u}^{n}_s)\bm{u}^{n}_s\right)-\frac{1}{\epsilon\Delta t}\nabla\nabla\cdot \bm{u}^{n}_s-\nabla p_{s-1}^{n+1}+\bm{f}^{n+1}.
\end{split}
\end{equation}
For each $n+1$ time step, starting from $s=1$, we first obtain $\bm{u}^{n+1}_1$ by employing the same splitting technique (\ref{the3-3}) as used in the computation of the P‑SAV scheme. Then we compute $q_1^{n+1}$ and $p_1^{n+1}$ from (\ref{the5-13b}) and (\ref{the5-13c}), respectively. Repeat this procedure to find $\bm{u}_2^{n+1}$, $q_2^{n+1}$ and $p_2^{n+1}$, and so on.

\textbf{The second-order scheme.}
The second-order midpoint scheme is as follows:
\begin{subequations}\label{the5-15}
	\begin{align}
&\frac{\bm{u}^{n+1}_s-\bm{u}^{n}_s}{\Delta t}+{q}_{s}^{n+\frac{1}{2}}\left((\bm{\widetilde{u}}_{s}^{n+\frac{1}{2}} \cdot \nabla) \bm{\widetilde{u}}^{n+\frac{1}{2}}_{s}+(\nabla\cdot \bm{\widetilde{u}}_{s}^{n+\frac{1}{2}})\bm{\widetilde{u}}_{s}^{n+\frac{1}{2}}\right)-\nu \Delta \bm{u}_{s}^{n+\frac{1}{2}}+\nabla p_{s}^{n+\frac{1}{2}}=\bm{f}^{n+\frac{1}{2}}, \label{the5-15a}\\
&\frac{q_{s}^{n+1}-q_{s}^{n}}{\Delta t}=\int_\Omega \left(\bm{\widetilde{u}}_{s}^{n+\frac{1}{2}}\cdot \nabla \bm{\widetilde{u}}_{s}^{n+\frac{1}{2}}\cdot \bm{u}_{s}^{n+\frac{1}{2}}+(\nabla \cdot \bm{\widetilde{u}}_{s}^{n+\frac{1}{2}})\bm{\widetilde{u}}_{s}^{n+\frac{1}{2}}\cdot \bm{{u}}_{s}^{n+\frac{1}{2}}\right) \ d\bm{x}, \label{the5-15b} \\
&\frac{\nabla\cdot \bm{u}^{n+1}_s-\nabla\cdot \bm{u}^{n}_s}{\Delta t}+\beta \nabla\cdot\bm{u}^{n+\frac{1}{2}}_s=-\epsilon(p^{n+\frac{1}{2}}_s-p^{n+\frac{1}{2}}_{s-1}), \label{the5-15c}
\end{align}
\end{subequations}
where $\bm{\widetilde{u}}_s^{n+\frac{1}{2}}=\frac{3}{2}\bm{u}_s^{n}-\frac{1}{2}\bm{u}_s^{n-1}$.

\begin{remark}
It can be seen that the incompressibility constraint is uniformly under control in time and the system does not introduce additional stiffness due to its built-in Baumgarte stability. The SR-SAV method requires $s$ iterations at each time step (taking $s=2$ or 3 in actual computations may be enough in most cases), and its computational time is approximately $s$ times that of the P-SAV method.
\end{remark}

By following the stability and convergence arguments of the P‑SAV scheme, together with Theorem 4.6 in \cite{lu2008analysis}, we have the following error estimates:
\begin{equation}\label{the5-16}
\begin{split}
&\Vert\bm{u}_s^{n}-\bm{u}\Vert_{H^1}^2+ \Delta t\sum\limits_{m=0}^{n}\left(\Vert \bm{u}_s^m-\bm{u}\Vert_{H^2}^2+ \Vert p_s^m-p\Vert_{H^1}^2\right) \leq C\left((\Delta t)^2+\epsilon^{2s}\right), \ \text{(first-order scheme)},\\
&\Vert\bm{u}_s^{n}-\bm{u}\Vert_{H^1}^2+ \Delta t\sum\limits_{m=0}^{n}\left(\Vert \bm{u}_s^m-\bm{u}\Vert_{H^2}^2+ \Vert p_s^m-p\Vert_{H^1}^2\right)\leq C\left((\Delta t)^4+\epsilon^{2s}\right), \ \text{(second-order scheme)}.
\end{split}
\end{equation}
The error estimates of P-SAV and SR-SAV methods further justify that the SR-SAVshould perform better than P-SAV.

\section{Numerical experiments}
\label{section6}
In this section, we first present two numerical examples with exact solutions to verify the accuracy of the proposed first-order and second-order schemes. Moreover, we compare the results obtained with the projection method. The final example of flow around a circular cylinder demonstrates the ability of the penalty-SAV method to simulate fluid flow problems. 

For the spatial discretization, the $\mathcal{P}_2-\mathcal{P}_1$ finite elements to approximate the velocity and pressure. We set the penalty parameter $\epsilon=10^{-5}$ in all numerical experiments. For the SR-SAV method, the damping parameter $\beta=1$, the iteration number $s=2$ and the initial guess $p_0^n=0$ are used.

\textbf{Example 1.} In this example, we set the domain $\Omega=[0,1]^2$, Reynolds number $Re=1$, and $T=1$. The homogeneous Dirichlet boundary condition is imposed. The right hand side of the equations is computed according to the following analytic solution given by
\begin{equation}\label{the6-1}
\begin{split}
&u_1(t,x,y) = \sin(t)\sin^2(\pi x)\sin(2\pi y),\\
&u_2(t,x,y) = \sin(t)\sin(2\pi x)\sin^2(\pi y),\\
&p(t,x,y) = \sin(t)\cos(\pi x)\sin(\pi y).
\end{split}
\end{equation}

We also use the following first-order linearized projection scheme for comparison with the P-SAV method.
\begin{equation}\label{the6-2}
\begin{split}
&\frac{\widetilde{\bm{u}}^{n+1}-\bm{u}^{n}}{\Delta t}+(\bm{u}^{n}\cdot \nabla)\widetilde{\bm{u}}^{n+1}-\nu\widetilde{\bm{u}}^{n+1}=\bm{f}^{n+1},\quad \widetilde{\bm{u}}^{n+1}|_{\partial\Omega}=0,\\
&\frac{\bm{u}^{n+1}-\widetilde{\bm{u}}^{n+1}}{\Delta t}+\nabla p^{n+1}=0,\\
&\nabla\cdot \bm{u}^{n+1}=0, \quad \bm{u}^{n+1}\cdot \bm{n}|_{\partial\Omega}=0.
\end{split}
\end{equation}
The pressure is obtained by solving the Poisson equation $\Delta p^{n+1}=\frac{\nabla\cdot \widetilde{\bm{u}}^{n+1}}{\Delta t}$ with the artificial Neumann boundary condition $\nabla p^{n+1}\cdot \bm{n}|_{\partial \Omega}=0$. 

Numerical results are presented in Table \ref{table1}. We observe that the proposed P-SAV scheme achieves the desired accuracy, and consistent with the error analysis. The most striking result is that the proposed P-SAV method has better accuracy and efficiency than the classical projection method. For the projection scheme, the influence of the numerical boundary layer may prevent achieving the desired accuracy of velocity and pressure. For the second-order P-SAV and SR-SAV schemes, the convergence rates for velocity in $L^\infty$-norm and the pressure in $L^2$-norm are listed in Table \ref{table2}, with both achieving second‑order accuracy and no significant difference in accuracy between the two schemes. The calculation of $\nabla\cdot \bm{u}$ in $L^\infty$-norm from Table \ref{table2} demonstrates that the SR-SAV method provides a better approximation of the incompressibility condition. Due to the small setting of $\epsilon$, satisfactory numerical solutions are obtained by using the P-SAV method without requiring higher-order approximation of $\Vert\nabla\cdot \bm{u}\Vert$. We further increase $\epsilon$ with fixed $\Delta t=1/128$ for computation. As shown in Table \ref{table3-1}, the SR-SAV scheme has a significant improvement in accuracy compared to the P-SAV scheme. The most critical result is the estimation of the penalty parameter that shows the proper convergence rate of $O(\epsilon)$ and $O(\epsilon^2)$ for P-SAV and SR-SAV schemes, respectively.

\begin{table}[H]
\caption{Errors and convergence rates for Example 1 with the first-order scheme using the P-SAV and the projection methods at $t=T=1$.}\label{table1}
\centering
\scalebox{0.85}{
\resizebox{\linewidth}{!}{
\begin{tabular}{c c c c c c} \hline  		
\large $\Delta t$ (P-SAV/Projection)  & \large $\Vert e_{\bm{u}}\Vert_{L^\infty}$  & \large Order   & \large $\Vert e_{p}\Vert_{L^2}$ & \large Order & \large CPU time (s)\\ \hline
\large $1/4$		& \large 1.94E-3/4.91E-2 &  & \large 7.22E-2/3.57E-1 &  & \large 15.66/26.93 \\ [4pt]
\large $1/8$	& \large 9.97E-4/4.13E-2  & \large 0.96/\large 0.25 & \large 3.57E-2/3.12E-1  & \large 1.01/\large 0.20 &  \large 32.99/60.28 \\  [4pt]
\large$1/16$	& \large 5.02E-4/3.10E-2  &  \large 0.99/\large 0.41  & \large 1.82E-2/2.50E-1  & \large 0.97/\large 0.32 & \large 73.00/134.74 \\  [4pt]
\large$1/32$	& \large 2.51E-4/2.00E-2   & \large 1.00/\large 0.63 &  \large 1.01E-2/1.72E-1    & \large 0.84/\large 0.54 & \large 139.12/240.91 \\  \hline 
\end{tabular}}
	}
\end{table}

\begin{table}[H]
\caption{Errors and convergence rates for Example 1 with the second-order scheme using the P-SAV and the SR-SAV methods at $t=T=1$.}\label{table2}
\centering
\scalebox{0.85}{
\resizebox{\linewidth}{!}{
\begin{tabular}{c c c c c c} \hline  		
\large $\Delta t$ (P-SAV/SR-SAV)   & \large $\Vert e_{\bm{u}}\Vert_{L^\infty}$  & \large Order   & \large $\Vert e_{p}\Vert_{L^2}$ & \large Order & \large $\Vert \nabla\cdot\bm{u}\Vert_{L^\infty}$ \\ \hline 
\large $1/4$		& \large 6.59E-3/\large 6.59E-3 &  & \large 1.25E-2/\large 1.25E-2 &  &\large 8.85E-6/\large 5.60E-9 \\  [4pt]
\large $1/8$	& \large 1.63E-3/\large 1.63E-3  & \large 2.01/2.01 & \large 3.23E-3/\large 3.24E-3  & \large 1.96/\large 1.95 & \large 8.78E-6/\large 1.48E-9 \\   [4pt]
\large $1/16$	& \large 4.09E-4/\large 4.09E-4  &  \large 2.00/2.00  & \large 9.00E-4/\large 8.99E-4  & \large 1.85/1.85 & \large 8.76E-6/\large 3.90E-10 \\  [4pt]
\large $1/32$	& \large 1.03E-4/\large 1.00E-4   & \large 1.98/2.03 &  \large 2.53E-4/\large 2.50E-4    & \large 1.83/\large 1.85 & \large 8.75E-6/\large 6.94E-11 \\  \hline
\end{tabular}}
	}
\end{table}

\begin{table}[H]
\caption{Errors and convergence rates for Example 1 with $\Delta t=1/128$ by using the second-order P-SAV and SR-SAV schemes at $t=T=1$.}\label{table3-1}
\centering
\scalebox{0.85}{
\resizebox{\linewidth}{!}{
\begin{tabular}{c c c c c c} \hline  		
\large $\epsilon$ (P-SAV/SR-SAV)   & \large $\Vert e_{\bm{u}}\Vert_{L^\infty}$  & \large Order   & \large $\Vert e_{p}\Vert_{L^2}$ & \large Order & \large $\Vert \nabla\cdot\bm{u}\Vert_{L^\infty}$ \\ \hline 
\large $0.1$		& \large 2.05E-2/\large 6.70E-4 &  & \large 7.29E-2/\large 3.02E-3 &  &\large 7.13E-2/\large 1.75E-3 \\  [4pt]
\large $0.05$	& \large 1.13E-2/\large 1.86E-4  & \large 0.86/1.85 & \large 4.05E-2/\large 8.21E-4  & \large 0.85/\large 1.88 & \large 3.86E-2/\large 4.56E-4 \\   [4pt]
\large $0.025$	& \large 5.93E-3/\large 5.05E-5  &  \large 0.93/1.88  & \large 2.15E-4/\large 2.18E-4  & \large 0.91/1.91 & \large 2.01E-2/\large 1.17E-4 \\  [4pt]
\large $0.0125$	& \large 3.05E-3/\large 1.33E-5   & \large 0.96/1.92 &  \large 1.10E-2/\large 5.72E-5    & \large 0.97/\large 1.93 & \large 1.03E-2/\large 2.95E-5 \\  \hline
\end{tabular}}
	}
\end{table}

\textbf{Example 2} (Taylor–Green vortices). Here we consider the Taylor–Green vortices in $\Omega=[0,2\pi]^2$ with the periodic boundary condition for velocity and pressure. The parameters are set $Re=1$ and $T=1$. The right hand side of the equations is computed based on the following analytic solution given by
\begin{equation}\label{the6-3}
\begin{split}
&u_1(t,x,y) = -\cos(x)\sin(y)e^{-2t},\\
&u_2(t,x,y) = \sin(x)\cos(y)e^{-2t},\\
&p(t,x,y) = -\frac{1}{4}\left(\cos(2x)+\cos(2y)\right)e^{-4t}.
\end{split}
\end{equation}

It can be seen from Table \ref{table3} that compared with the P-SAV method, the classical projection method exhibits superior accuracy and achieves the desired convergence rate, which contradicts the results of Example 1. The comparison of pressure errors is particularly obvious, since here the pressure boundary condition is enforced as an exact periodic condition, which does not generate the numerical boundary layer and thus does not affect the accuracy. In terms of computational cost, the proposed P-SAV method has a significant advantages as it only requires solving a constant coefficient elliptic equation at each time step. Table \ref{table4} presents the errors and convergence rates of the second-order P-SAV and SR-SAV schemes, with results similar to those in Example 1. Figure \ref{figure1} shows that both the original energy and the modified energy satisfy the energy dissipation law, and the modified energy converges to the original energy as the time step decreases.

\begin{table}[H]
\caption{Errors and convergence rates for Example 2 with the first-order scheme using the P-SAV and the projection methods at $t=T=1$.}\label{table3}
\centering
\scalebox{0.85}{
\resizebox{\linewidth}{!}{
\begin{tabular}{c c c c c c} \hline  		
\large $\Delta t$ (P-SAV/Projection)  & \large $\Vert e_{\bm{u}}\Vert_{L^\infty}$  & \large Order   & \large $\Vert e_{p}\Vert_{L^2}$ & \large Order  & \large CPU time (s)\\ \hline
\large $1/4$		& \large 6.22E-2/5.87E-2 &  & \large 1.02E-1/1.53E-2 &  & \large 21.51/46.59 \\ [4pt]
\large $1/8$	& \large 3.24E-2/3.06E-2  & \large 0.94/\large 0.94 & \large 4.03E-2/7.09E-3  & \large 1.34/1.11 &  \large 45.16/91.46 \\  [4pt]
\large$1/16$	& \large 1.65E-2/1.56E-2  &  \large 0.97/0.97  & \large 1.72E-2/3.32E-3  & \large 1.23/1.09 & \large 91.23/186.74 \\  [4pt]
\large$1/32$	& \large 8.35E-3/7.94E-3   & \large 0.99/0.97 &  \large 7.84E-3/1.59E-3    & \large 1.12/1.06 & \large 183.79/379.92 \\  \hline 
\end{tabular}}
	}
\end{table}

\begin{table}[H]
\caption{Errors and convergence rates for Example 2 with the second-order scheme using the P-SAV and the SR-SAV methods at $t=T=1$.}\label{table4}
\centering
\scalebox{0.85}{
\resizebox{\linewidth}{!}{
\begin{tabular}{c c c c c c} \hline  		
\large $\Delta t$ (P-SAV/SR-SAV)   & \large $\Vert e_{\bm{u}}\Vert_{L^\infty}$  & \large Order   & \large $\Vert e_{p}\Vert_{L^2}$ & \large Order & \large $\Vert \nabla\cdot\bm{u}\Vert_{L^\infty}$ \\ \hline 
\large $1/4$		& \large 2.56E-2/2.53E-2 &  & \large 1.49E-2/1.49E-2 &  &\large 6.64E-6/2.83E-9 \\  [4pt]
\large $1/8$	& \large 6.43E-3/6.18E-3  & \large 1.99/2.04 & \large 2.77E-3/2.77E-3  & \large 2.42/2.42 & \large 6.47E-6/5.51E-10 \\   [4pt]
\large $1/16$	& \large 1.73E-3/1.46E-3  &  \large 1.90/2.08  & \large 7.26E-4/7.00E-4  & \large 1.93/1.98 & \large 6.32E-6/1.30E-10 \\  [4pt]
\large $1/32$	& \large 4.56E-4/3.53E-4   & \large 1.91/2.05 &  \large 1.85E-4/1.75E-4    & \large 1.97/2.00 & \large 6.17E-6/3.24E-11 \\  \hline
\end{tabular}}
	}
\end{table}

\begin{figure}[H]
	\centering
	\includegraphics[scale=0.38]{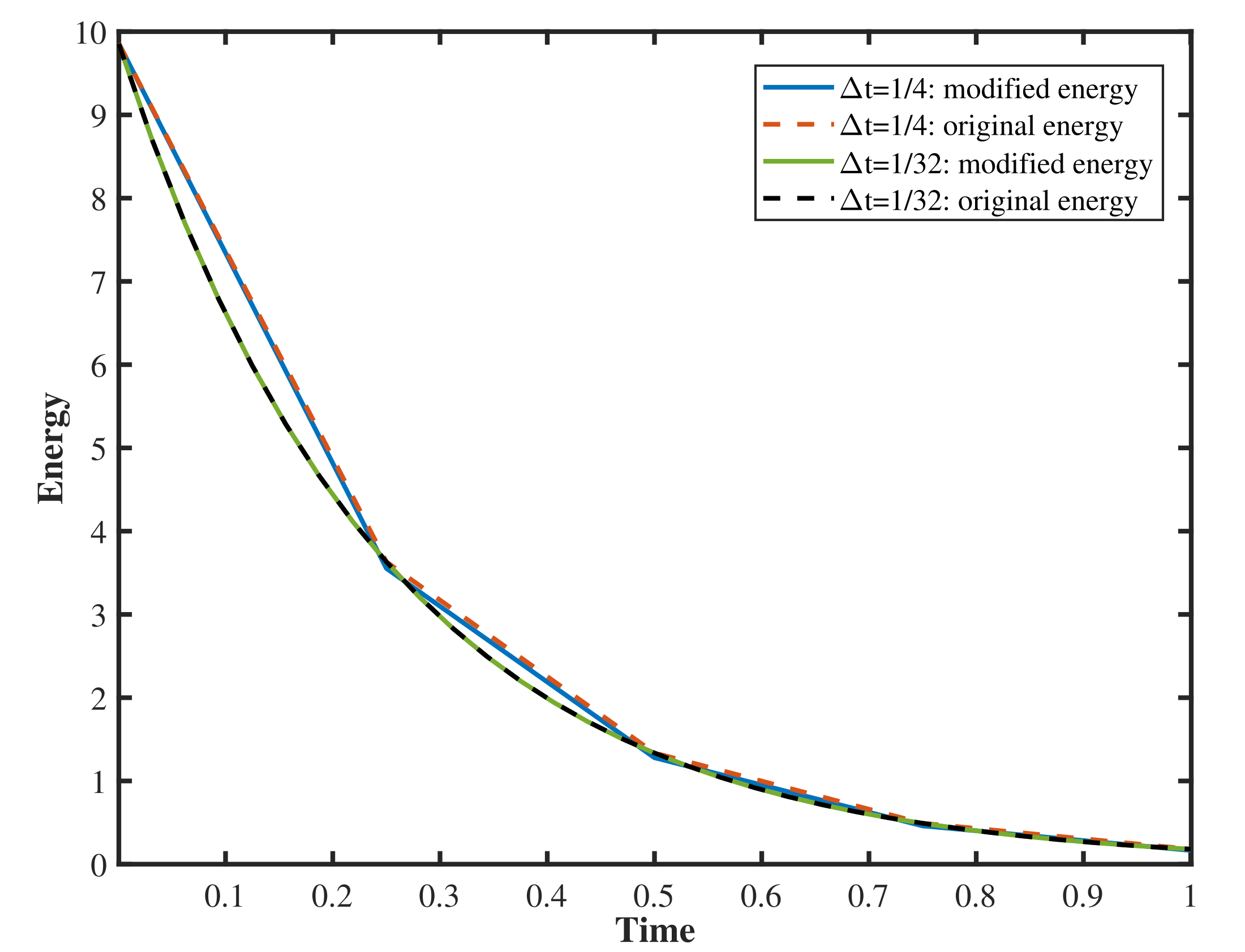}
	\caption{Evolution of energy for the second-order P-SAV scheme with $\Delta t=1/4$ and $\Delta t=1/32$.}
	\label{figure1}
\end{figure}

\textbf{Example 3} (Flow past a cylinder). The last numerical example is flow past a cylinder. The computational domain is a rectangle $\Omega=(0,2.2)\times(0, 0.41)$, excluding a disk (the cross section of the cylinder) inside, with the center of the disk located at $(0.2, 0.2)$ and a radius of $0.05$. The left boundary is an inflow boundary, with the conditions set as $\bm{u}=\left(\frac{6y(0.41-y)}{0.41^2}, 0\right)$. The zero Neumann boundary condition is applied to the velocity at the right exit boundary. On the upper and lower boundaries, the velocities are non-slip. The parameters are $Re=1000$, $\Delta t=1/1500$. The snapshots of velocity field are shown in Figure \ref{figure2}. We can see how the flow separates with the evolution of time. Complex vortex shedding appears behind the cylinder, exhibiting strong oscillations and eddies.

\begin{figure}[H]
	\centering
	\subfigure[$t=0.50$]{
		\includegraphics[scale=0.28]{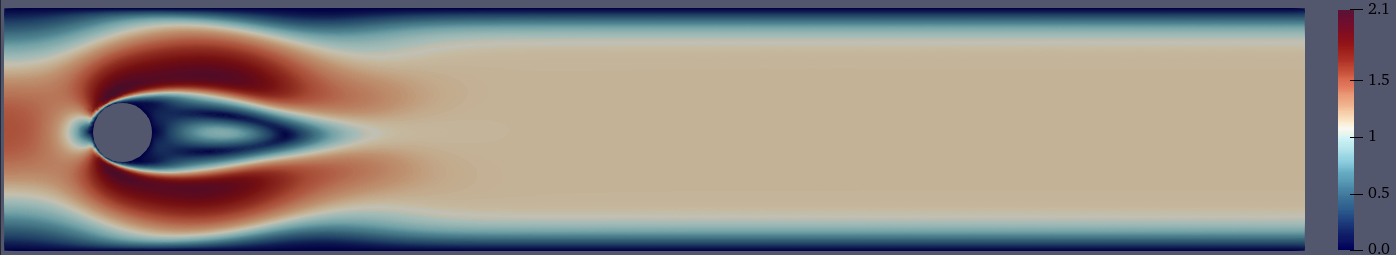}
	}
	\subfigure[$t=1.00$]{
		\includegraphics[scale=0.28]{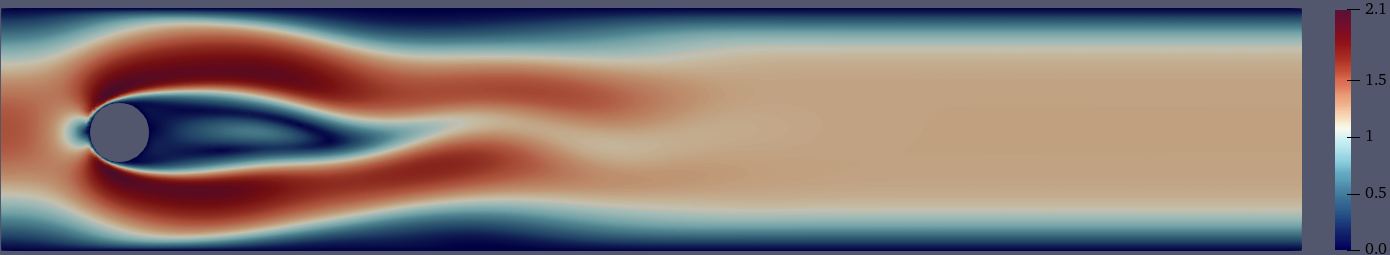}
	}
	\subfigure[$t=2.00$]{
		\includegraphics[scale=0.28]{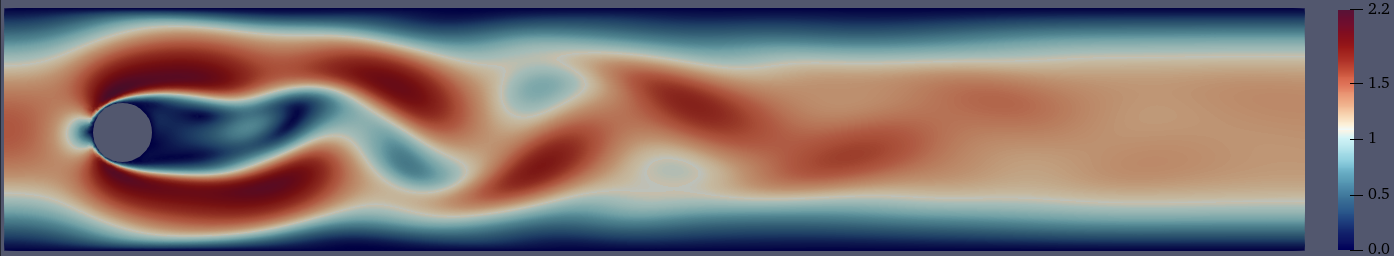}
	}
	
	\caption{Snapshots of the velocity field of flow passing a cylinder.}
	\label{figure2}
\end{figure}

\section{Conclusions and remarks}
\label{section7}
In this paper, we propose two regularization methods for the Navier-Stokes equations based on the idea of auxiliary variables that do not need to set up the boundary condition for the pressure, and construct corresponding decoupled first- and second-order numerical schemes. One is the P-SAV scheme developed by combining the penalty method, which is energy stable and requires solving only two constant coefficient elliptic equations at each time step. The other is the SR-SAV scheme, which uses the sequential regularization method to improve the stability and accuracy of the P-SAV scheme. Moreover, the SR-SAV scheme requires solving elliptical equations with a constant coefficient multiple times at each time step, resulting in a higher computational cost. A rigorous error analysis of the first‑order scheme in 2‑D and 3‑D is conducted without any restriction on the time step. It should be noted that the comparison with the classical projection method in numerical experiments demonstrates the importance of preserving the original boundary condition in the numerical method, otherwise the accuracy of the numerical solution can be significantly affected.

We believe that the ideas presented in this work can be applied to construct unconditionally stable and convergent schemes with explicit treatment of nonlinear terms for fluid-coupled multiphysics equations, such as the Cahn-Hilliard-Navier-Stokes \cite{chen2024convergence} and magnetohydrodynamic \cite{wang2022optimal} models. The error analysis (both continuous and discrete levels) of this method for these systems and comparisons with other numerical methods will be the focus of our future work.

\section*{Acknowledgments}
Z. Wang and P. Lin are partially supported by the National Natural Science Foundation of China 12501535, 12371388, 11861131004.

\bibliographystyle{elsarticle-num}
\bibliography{Ref}

\end{document}